\title{The Generalized Terwilliger Algebra of the Hypercube}
\date{\today}
\author{Nathan Nicholson}
\definecolor{hot}{RGB}{65,105,225}
\newtheorem{thm}{Theorem}[section]
\theoremstyle{plain}
\newtheorem{ex}[thm]{Example}
\newtheorem{lem}[thm]{Lemma}
\newtheorem{prop}[thm]{Proposition}
\newtheorem{cor}[thm]{Corollary}
\theoremstyle{definition}
\newtheorem{defn}[thm]{Definition}
\newtheorem{rem}[thm]{Remark}
\newcommand{\C}{\mathbb{C}}
\newcommand{\cT}{\mathcal{T}}
\newcommand{\cQ}{\mathcal Q}
\newcommand{\cC}{\mathcal C}
\newcommand{\fT}{\mathfrak T}
\newcommand{\fS}{\mathfrak S}
\newcommand{\fQ}{\mathfrak K}
\newcommand{\fe}{\mathfrak e}
\newcommand{\fx}{\mathfrak x}
\newcommand{\renumerate}{\begin{enumerate}[label=(\roman*)]}
\newcommand{\eenumerate}{\end{enumerate}}
\begin{document}

\begin{abstract}
In the year 2000, Eric Egge introduced the generalized Terwilliger algebra $\cT$ of a distance-regular graph $\Gamma$. For any vertex $x$ of $\Gamma,$ there is a surjective algebra homomorphism $\natural$ from $\cT$ to the Terwilliger algebra $T(x)$. If $\Gamma$ is complete, then $\natural$ is an isomorphism. If $\Gamma$ is not complete, then $\natural$ may or may not be an isomorphism, and in general the details are unknown. We show that if $\Gamma$ is a hypercube, then the algebra homomorphism $\natural:\cT \to T(x)$ is an isomorphism for all vertices $x$ of $\Gamma$.

\bigskip

\noindent
\textbf{Keywords.} Distance-regular graph; Terwilliger algebra; Hamming cube; hypercube.

\noindent
\textbf{2020 Mathematics Subject Classification.} 
05E30. 
\end{abstract}

\maketitle


\section{Introduction}\label{s:introduction}

In the topic of Algebraic Combinatorics, there is a family of finite undirected graphs said to be distance-regular \cite{bcn}. These graphs are heavily studied; see \cite{bannaiIto1, bannaiIto2, bcn, vdk, TerDrgSurvey}. For a vertex $x$ of a distance-regular graph $\Gamma$, the associated Terwilliger algebra $T(x)$ was introduced in \cite{terwSub1} (there called the subconstituent algebra). This algebra is finite-dimensional, semi-simple, and noncommutative in general. Some notable papers about $T(x)$ are \cite{caughman, curtin1, curtin2, dickie, hobartIto, pascasio, tanabe, tanaka, terwSub1, terwSub2, terwSub3}. There are some well-known relations in $T(x)$ called the triple product relations; the form of the triple product relations is independent of $x$ (see \cite[Lemma 3.2]{terwSub1}).

In \cite{egge1}, Eric Egge introduced the generalized Terwilliger algebra $\cT$ of $\Gamma$. This algebra is defined by generators and relations; the main relations are the triple product relations. By construction, for any vertex $x$ of $\Gamma$ there is a surjective algebra homomorphism $\natural: \cT \to T(x)$. If the graph $\Gamma$ is complete, then $\natural$ is an isomorphism. If $\Gamma$ is not complete, then $\natural$ may or may not be an isomorphism, and in general the details are unknown.

There is a type of distance-regular graph called the hypercube. The hypercube of diameter $d$ is often denoted $\cQ_d$. General information about $\cQ_d$ can be found in \cite[Chapters 1 and 9]{bcn}. In \cite{go}, Junie Go showed that for any vertex $x$ of $\cQ_d$ there is an algebra isomorphism from $T(x)$ to a direct sum of full matrix algebras:
$$T(x) \to \bigoplus_{0 \leq r \leq \lfloor d/2 \rfloor} M_{d+1-2r}(\C).$$
In this paper, we investigate the algebra $\cT$ associated with $\cQ_d$. Our main result is that for any vertex $x$ of $\cQ_d$, the algebra homomorphism $\natural: \cT \to T(x)$ is an isomorphism. To prove this result, we use the following strategy.

Writing $\cT_d$ for the generalized Terwilliger algebra associated with $\cQ_d$, we will display an algebra isomorphism
$$\cT_d \to M_{d+1}(\C) \oplus \cT_{d-2}$$
for $d \geq 2$. We will then argue by induction on $d$.

The paper is organized as follows. In Sections 2 and 3, we discuss the algebras $T(x)$ and $\cT$ for any distance-regular graph. In Sections 4 and 5, we discuss $T(x)$ and $\cT$ for the hypercube $\cQ_d$. In Section 6, we discuss a certain central idempotent $u_0$ of $\cT$ that was introduced in \cite{egge1}. In Section 7, we prove the main result.


\section{Distance-regular Graphs}\label{s:preliminaries}
In this section, we review some definitions and results concerning distance-regular graphs and the Terwilliger algebra. For more information, we refer the reader to \cite{bcn, TerDrgSurvey, terwSub1}.

Suppose $X$ is a nonempty finite set. Let $M_X(\C)$ denote the algebra consisting of the matrices with rows and columns indexed by $X$ and entries in $\C$. Let $\C^X$ denote the vector space over $\C$ consisting of column vectors with coordinates indexed by $X$ and entries in $\C$. The algebra $M_X(\C)$ acts on $\C^X$ by left multiplication. For any positive integer $n,$ let $M_n(\C)$ denote the algebra consisting of square $n \times n$ matrices with entries in $\C$. 

Let $\Gamma = (X,R)$ denote a finite, undirected, connected graph, without loops or multiple edges, with vertex set $X$, edge set $R$, and path-length distance function $\partial$. Let
$$d = \max\{\partial(x,y) \mid x,y \in X\}.$$
We call $d$ the \textit{diameter} of $\Gamma$. Vertices $x,y \in X$ are said to be \textit{adjacent} whenever they form an edge. For any vertex $x \in X,$ let $\Gamma(x)$ denote the set of vertices that are adjacent to $x$.

Define $A \in M_X(\C)$ with $(x,y)$-entry
$$A_{xy} = \begin{cases} 1 & \text{if $x$ and $y$ are adjacent}; \\ 0 & \text{if $x$ and $y$ are not adjacent}\end{cases} \qquad \qquad (x,y \in X).$$
We call $A$ the \textit{adjacency matrix of $\Gamma$}. 

We say that $\Gamma$ is \textit{regular with valency $k$} whenever $k = |\Gamma(x)|$ for all $x \in X.$

We say that $\Gamma$ is \textit{distance-regular} whenever for any $0\leq h,i,j \leq d$, and any vertices $x,y \in X$ with $\partial(x,y) = h,$ the number of vertices $z \in X$ such that $\partial(x,z) = i$ and $\partial(y,z) = j$ is a constant depending only on $h,i,j$, but not on $x$ and $y$. We denote this constant by $p^h_{ij}$ and call it an \textit{intersection number of $\Gamma$}. Observe that $p^h_{ij} = p^h_{ji}$ $(0 \leq h,i,j \leq d).$

For the rest of this section, assume $\Gamma$ is distance-regular. Note that $\Gamma$ is regular with valency $k = p^0_{11}$ if $d \geq 1$.

We now recall the Bose-Mesner algebra of $\Gamma$. For $0 \leq i \leq d$, define $A_i \in M_X(\C)$ with $(x,y)$-entry
$$
(A_i)_{xy}= \begin{cases} 1 & \text{if }\partial(x,y) = i; \\ 0 & \text{if }\partial(x,y) \neq i \end{cases}\qquad \qquad (x,y \in X).
$$
We call $A_i$ the \textit{$i$th distance matrix of $\Gamma$}. Note that $A_0 = I,$ where $I \in M_X(\C)$ denotes the identity matrix. Note that $A_1 = A$, provided that $d \geq 1$. The sum $\sum_{i=0}^d A_i = J,$ where $J \in M_X(\C)$ denotes the matrix with every entry equal to 1. For notational convenience, define
\begin{equation} 
\label{ai=0_if}
A_i = 0 \qquad \qquad (i < 0 \text{ or }i > d).\end{equation}

By \cite[p.~44]{bcn},
\begin{equation}
A_iA_j = \sum_{h=0}^d p^h_{ij}A_h \qquad \qquad (0 \leq i,j \leq d).
\label{AiAjformula}
\end{equation}
Thus $\{A_i\}_{i =0}^d$ forms a basis for a commutative subalgebra $M$ of $M_X(\C)$. The matrix $A$ generates $M$ by \cite[p.~127]{bcn}. We call $M$ the \textit{Bose-Mesner algebra of $\Gamma$}. Note that $M$ has dimension $d+1$.

For $0 \leq i \leq d,$ define $k_i = p^0_{ii}.$ For any $x \in X,$ $k_i$ is equal to the number of vertices at distance $i$ from $x$. We call $k_i$ the \textit{$i$th valency of $\Gamma$}. Note that $k_0 = 1$. Moreover, $k_1 = k$ if $d \geq 1$.

The matrix $A$ has $d+1$ distinct eigenvalues, because $A$ generates $M$ and $M$ has dimension $d+1$. Denote these eigenvalues by $\theta_0 > \theta_1 > \cdots > \theta_d$. For $0 \leq i \leq d$, let $E_i \in M_X(\C)$ be the matrix which acts as $I$ on the $\theta_i$-eigenspace of $A$ and as 0 on all other eigenspaces of $A$. The matrices $\{E_i\}_{i=0}^d$ form a basis for $M$, and
\begin{equation}\label{e:ei_formulas}\sum_{i=0}^d E_i = I, \qquad \qquad E_iE_j = \delta_{ij}E_i \qquad \qquad (0 \leq i,j \leq d).\end{equation}
We call $E_i$ the \textit{primitive idempotent of $\Gamma$ associated with $\theta_i$} ($0 \leq i \leq d$). Define
\begin{equation} E_i = 0 \qquad \qquad (i < 0 \text{ or } i > d). \end{equation}

By \cite[p.~45]{bcn}, we have $E_0 = \vert X \vert^{-1}J$. By construction,
\begin{equation}
    A = \sum_{i=0}^d \theta_iE_i.
    \label{A1_sum_of_Ei}
\end{equation}

Next we recall the Krein parameters of $\Gamma$. For $B,C \in M_X(\C),$ let $B \circ C$ denote their entry-wise product. Note that
$$A_i \circ A_j = \delta_{ij}A_i \qquad \qquad (0 \leq i,j \leq d).$$ Consequently, $M$ is closed under $\circ$. Because $\{E_i\}_{i=0}^d$ is a basis for $M$, there exist scalars $(p^{h}_{ij})^* \in \C$ ($0 \leq h,i,j \leq d)$ such that
\begin{equation}
E_i \circ E_j = |X|^{-1}\sum_{h=0}^d (p^{h}_{ij})^*E_h \qquad \qquad (0 \leq i,j \leq d).
\label{krein_parameter_def}
\end{equation}
The scalars $(p^{h}_{ij})^*$ are called the \textit{Krein parameters of $\Gamma$}. By \cite[p.~50]{bcn}, $(p^{h}_{ij})^*$ is real and nonnegative for $0 \leq h,i,j \leq d$.

For $0 \leq i \leq d,$ define $k_i^* = (p^{0}_{ii})^*$. We call $k_i^*$ the \textit{$i$th dual valency of $\Gamma$}.

We next recall the dual Bose-Mesner algebras of $\Gamma$. Fix a vertex $x \in X.$ For $0 \leq i \leq d,$ let $E_i^*=E_i^*(x)$ denote the diagonal matrix in $M_X(\C)$ with $(y,y)$-entry
$$(E_i^*)_{yy} = \begin{cases}
1 & \text{if }\partial(x,y) = i; \\
0 & \text{if }\partial(x,y) \neq i
\end{cases} \qquad \qquad (y \in X).$$
We call $E_i^*$ the \textit{ith dual primitive idempotent of $\Gamma$ with respect to $x$}. Define
\begin{equation}E_i^* = 0 \qquad \qquad (i < 0 \text{ or } i > d).\end{equation}

By construction,
$$\sum_{i=0}^d E_i^* = I, \qquad \qquad E_i^*E_j^* = \delta_{ij}E_i^* \qquad \qquad (0 \leq i,j \leq d).$$
It follows that $\{E_i^*\}_{i=0}^d$ forms a basis for a commutative subalgebra $M^* = M^*(x)$ of $M_X(\C).$ We call $M^*$ the \textit{dual Bose-Mesner algbebra of $\Gamma$ with respect to $x$}.

For $0 \leq i \leq d,$ we define a diagonal matrix $A_i^* = A_i^*(x) \in M_X(\C)$ with $(y,y)$-entry
$$(A_i^*)_{yy} = |X| (E_i)_{xy} \qquad \qquad (y \in X).$$
Note that $A_0^* = I.$ For $d \geq 1,$ we abbreviate $A^* = A_1^*$ and call this the \textit{dual adjacency matrix of $\Gamma$ with respect to $x$}. Define
\begin{equation}
\label{e:ai=0_if_i_bad}
A_i^* = 0 \qquad \qquad (i < 0 \text{ or } i > d).\end{equation}
By \cite[Corollary~11.6]{TerDrgSurvey}, $A^*$ generates $M^*$.

By \eqref{krein_parameter_def},
$$A_i^*A_j^* = \sum_{h=0}^d (p^{h}_{ij})^*A_h^* \qquad \qquad (0 \leq i,j \leq d).$$
By \cite[Lemma~5.8]{TerDrgSurvey}, the matrices $\{A_i^*\}_{i=0}^d$ form a basis for $M^*.$

We next recall the Terwilliger algebra of $\Gamma$. Let $T = T(x)$ denote the subalgebra of $M_X(\C)$ generated by $M$ and $M^*.$ We call $T$ the \textit{Terwilliger algebra of $\Gamma$ with respect to $x$}. We remark that $T$ is sometimes called the subconstituent algebra.

For $d \geq 1,$ since the matrices $\{E_i^*\}_{i=0}^d$ form a basis for $M^*,$ there exist scalars $\theta_i^* \in \C$ ($0 \leq i \leq d$) such that
$$A^* = \sum_{i=0}^d \theta_i^*E_i^*.$$

Because $\{A_i\}_{i=0}^d$ and $\{E_i\}_{i=0}^d$ each constitute bases for $M$, there must exist scalars $p_i(j),q_i(j) \in \C$ ($0 \leq i,j \leq d$) such that
\begin{align}
    A_i = \sum_{j=0}^d p_i(j)E_j,  \qquad \qquad E_i &= {|X|}^{-1}\sum_{j=0}^d q_i(j)A_j \qquad \qquad (0 \leq i \leq d).
    \label{e:pij_and_qij}
\end{align}
Similarly, there must exist scalars $p_i^*(j),q_i^*(j) \in \C$ ($0 \leq i,j \leq d$) such that
\begin{align*}
    A_i^* = \sum_{j=0}^d p_i^*(j)E_j^*,\qquad \qquad E_i^* = {|X|}^{-1}\sum_{j=0}^d q_i^*(j)A_j^* \qquad \qquad (0 \leq i \leq d).
\end{align*}
By the construction of $\{E_i^*\}_{i=0}^d$ and $\{A_i^*\}_{i=0}^d$, we have
\begin{equation}
    q_i(j) = p_i^*(j), \qquad \qquad p_i(j) = q_i^*(j) \qquad \qquad (0 \leq i,j \leq d).
    \label{qi_equals_pi*}
\end{equation}
For $d \geq 1$, we have
\begin{equation}
    \theta_i = p_1(i), \qquad \qquad \theta_i^* = p_1^*(i) \qquad \qquad (0 \leq i \leq d).
    \label{thetai_equals_p1}
\end{equation}
By \cite[Lemma 2.2.1]{bcn},
\begin{equation}
    p_0(j) = 1, \qquad \qquad q_0(j) = 1 \qquad \qquad (0 \leq j \leq d).
    \label{e: p0(j)=1}
\end{equation}

Define matrices $P,Q \in M_{d+1}(\C)$ with entries $P_{ij} = p_j(i)$ and $Q_{ij} = q_j(i)$ ($0 \leq i,j \leq d$). Then $P$ is the change of basis matrix from $\{A_i\}_{i=0}^d$ to $\{E_i\}_{i=0}^d$, and $|X|^{-1}Q$ is the change of basis matrix from $\{E_i\}_{i=0}^d$ to $\{A_i\}_{i=0}^d$. Moreover, $P$ and $|X|^{-1}Q$ are inverses.

By \eqref{qi_equals_pi*}, $|X|^{-1}P$ is the change of basis matrix from $\{E_i^*\}_{i=0}^d$ to $\{A_i^*\}_{i=0}^d$, and $Q$ is the change of basis matrix from $\{A_i^*\}_{i=0}^d$ to $\{E_i^*\}_{i=0}^d$.

By \cite[Lemma~3.2]{terwSub1}, the following hold for $0 \leq h,i,j \leq d$:
\begin{align*}
    E_h^*A_iE^*_j = 0 & \qquad \text{iff} \qquad p^h_{ij} = 0, \\
    E_hA^*_iE_j = 0 & \qquad \text{iff} \qquad (p^{h}_{ij})^* = 0.
\end{align*}
The above statements are referred to as the \textit{triple product relations}.

To conclude this section, we recall the notion of self-duality. We say that $\Gamma$ is \textit{self-dual} whenever $p^h_{ij} = (p^{h}_{ij})^*$ ($0 \leq h,i,j \leq d$). In this case, we have $k_i = k_i^*$ and $\theta_i = \theta_i^*$ ($0 \leq i \leq d$), and we have $p_{i}(j) = q_i(j)$ ($0 \leq i,j \leq d$). See \cite[p.~49]{bcn}.


\section{The Generalized Terwilliger Algebra}\label{s:Gen_T_Algebra}

In this section, we recall the generalized Terwilliger algebra and some related results from \cite{egge1}. Let $\Gamma = (X,R)$ denote a distance regular graph with diameter $d$. We fix a vertex $x \in X$ and let $T = T(x)$.

\begin{defn}\label{cTDef}
(See \cite[Definition~4.1]{egge1}) Let $\cT$ denote the algebra with 1, with generators $x_0,\dots,x_d,$ $x_0^*,\dots,x_d^*$ and relations $(T1)$--$(T3^*)$.
\begin{align*}
    (T1) & \qquad x_0 = x_0^*=1. \\
    (T2) & \qquad x_ix_j = \sum_{h=0}^d p^h_{ij}x_h & (0 \leq i,j \leq d).\\
    (T2^*) & \qquad x^*_ix^*_j = \sum_{h=0}^d (p^{h}_{ij})^*x_h^*& (0 \leq i,j \leq d).\\
    (T3) & \qquad e_h^*x_ie^*_j = 0 \text{ if } p^h_{ij} = 0 & (0 \leq h,i,j \leq d).\\
    (T3^*) & \qquad e_hx_i^*e_j = 0 \text{ if } (p^{h}_{ij})^* = 0 & (0 \leq h,i,j \leq d).
\end{align*}
In the above lines, we define
\begin{equation}
e_i = |X|^{-1}\sum_{j=0}^d q_i(j)x_j, \qquad \qquad e_i^* = |X|^{-1}\sum_{j=0}^d q^*_i(j)x^*_j \qquad \qquad (0 \leq i \leq d).
\label{eq:ei_def}
\end{equation}
We call $\cT$ the \textit{generalized Terwilliger algebra associated with $\Gamma$}.
\end{defn}

For notational convenience, define
\begin{align}
    x_i = 0, \qquad \qquad x_i^* = 0, \qquad \qquad e_i = 0, \qquad \qquad e_i^* = 0 \qquad \qquad  (i < 0 \text{ or } i > d).
    \label{xi_less_0}
\end{align}

\begin{rem}\label{T0_is_C}
If $d = 0$, the algebra $\cT$ is isomorphic to $\C$.
\end{rem}

\begin{rem}\label{Egge_typo}
In \cite[Definition~4.1]{egge1}, the relation (T1) is given as $x_0=x_0^*$. In \cite[Proposition~5.1]{egge1}, a proof is given that $x_0=x_0^*=1$. However the proof is not correct, and this can be seen by considering the case $d=0.$ Thus we adjusted our statement of $(T1)$ to incorporate the author's assumption that $x_0 = x_0^* = 1.$
\end{rem}

Next we recall some results about $\cT$.

\begin{lem}\label{hom_exists}
{\rm (See \cite[p.~3]{egge1})} There exists an algebra homomorphism $\natural:\cT \to T$ which sends
\begin{align*}
    x_i &\mapsto A_i, \qquad \qquad x_i^* \mapsto A_i^*, \\
    e_i &\mapsto E_i, \qquad \qquad e_i^* \mapsto E_i^*,
\end{align*}
for $0 \leq i \leq d$. Moreover, $\natural$ is surjective.
\end{lem}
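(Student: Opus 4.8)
The plan is to invoke the universal property of an algebra presented by generators and relations. Since $\cT$ is, by Definition~\ref{cTDef}, the unital algebra generated by $x_0,\dots,x_d,x_0^*,\dots,x_d^*$ subject to $(T1)$--$(T3^*)$, to produce an algebra homomorphism $\natural:\cT\to T$ it suffices to name elements of $T$ that are to be the images of the generators and to check that these elements satisfy the images of the relations $(T1)$--$(T3^*)$. I would take $x_i\mapsto A_i$ and $x_i^*\mapsto A_i^*$ for $0\le i\le d$.

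First I would dispatch $(T1)$, $(T2)$, and $(T2^*)$. Since $A_0=I=A_0^*$ and $I$ is the multiplicative identity of $T$, relation $(T1)$ holds for the proposed images. Relation $(T2)$ is exactly \eqref{AiAjformula}, and $(T2^*)$ is the identity $A_i^*A_j^*=\sum_{h=0}^d(p^{h}_{ij})^*A_h^*$ recorded in Section~\ref{s:preliminaries}. Next I would compute the images of the auxiliary elements $e_i$ and $e_i^*$. By \eqref{eq:ei_def}, the proposed map sends $e_i$ to $|X|^{-1}\sum_{j=0}^d q_i(j)A_j$, which equals $E_i$ by \eqref{e:pij_and_qij}, and sends $e_i^*$ to $|X|^{-1}\sum_{j=0}^d q_i^*(j)A_j^*$, which equals $E_i^*$ by the analogous change-of-basis formula. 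With these identifications in hand, $(T3)$ becomes the assertion that $E_h^*A_iE_j^*=0$ whenever $p^h_{ij}=0$, and $(T3^*)$ becomes the assertion that $E_hA_i^*E_j=0$ whenever $(p^{h}_{ij})^*=0$; these are precisely the triple product relations of \cite[Lemma~3.2]{terwSub1} quoted above. Hence all defining relations hold, so the universal property yields a (unique) algebra homomorphism $\natural:\cT\to T$ acting as stated on the $x_i$ and $x_i^*$, and the action on $e_i,e_i^*$ is forced to be $e_i\mapsto E_i$, $e_i^*\mapsto E_i^*$ by the computation just made.

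For surjectivity, I would observe that the image $\natural(\cT)$ is a subalgebra of $T$ containing every $A_i$ and every $A_i^*$, hence containing the Bose--Mesner algebra $M$ and the dual Bose--Mesner algebra $M^*=M^*(x)$. Since $T=T(x)$ is by definition the subalgebra of $M_X(\C)$ generated by $M$ and $M^*$, it follows that $\natural(\cT)=T$.

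I do not expect a genuine obstacle here: the content is entirely a relation-by-relation verification against results already established in Section~\ref{s:preliminaries}. The one point demanding care is the bookkeeping that identifies the images of $e_i$ and $e_i^*$ as $E_i$ and $E_i^*$ before $(T3)$ and $(T3^*)$ can be recognized as the triple product relations; in particular, one must use \eqref{qi_equals_pi*} (or equivalently the defining relations between the $p$'s and $q$'s) to match $e_i^*$ against $E_i^*$.
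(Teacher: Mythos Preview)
Your proposal is correct and is precisely the standard argument one would give; the paper itself does not supply a proof but simply cites \cite[p.~3]{egge1}, and what you have written is exactly the verification that citation points to.
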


The map $\natural$ in Lemma~\ref{hom_exists} is not an isomorphism in general.
However we do have the following results.

\begin{lem}\label{commutative_subalgebras}
{\rm (See \cite[Propositions~5.4 and~10.2]{egge1})} The following (i)--(iii) hold.
\renumerate
    \item The elements $\{x_i\}_{i=0}^d$ form a basis for a commutative subalgebra $\cC$ of $\cT$.
    \item The elements $\{e_i\}_{i=0}^d$ form a basis for $\cC$.
    \item The restriction of $\natural$ to $\cC$ induces an algebra isomorphism $\cC \to M$.
\eenumerate
\end{lem}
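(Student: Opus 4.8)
The plan is to establish (i)--(iii) in order, the one nontrivial idea being to use the homomorphism $\natural$ of Lemma~\ref{hom_exists} to transfer linear independence from $M_X(\C)$ back to the abstractly presented algebra $\cT$.

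For (i), let $\cC$ be the $\C$-span of $x_0,\dots,x_d$ in $\cT$. Relation $(T2)$ shows $x_ix_j\in\cC$ for all $i,j$, so $\cC$ is closed under multiplication, and since $x_0=1$ by $(T1)$ it is a subalgebra with identity. Commutativity is immediate from $(T2)$ together with $p^h_{ij}=p^h_{ji}$. For linear independence, suppose $\sum_{i=0}^d c_ix_i=0$ with $c_i\in\C$; applying $\natural$ and using $\natural(x_i)=A_i$ gives $\sum_{i=0}^d c_iA_i=0$, and since $\{A_i\}_{i=0}^d$ is a basis for $M$ we get $c_i=0$ for all $i$. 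Hence $\{x_i\}_{i=0}^d$ is a basis for $\cC$ and $\dim\cC=d+1$.

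For (ii), each $e_i$ is by \eqref{eq:ei_def} a $\C$-linear combination of $x_0,\dots,x_d$, so $e_i\in\cC$; the transition matrix expressing $(e_i)$ in terms of $(x_j)$ is $|X|^{-1}Q$, which is invertible since $P$ and $|X|^{-1}Q$ were seen to be mutually inverse change-of-basis matrices for $M$. Thus $\{e_i\}_{i=0}^d$ is obtained from the basis $\{x_i\}_{i=0}^d$ of $\cC$ by an invertible linear transformation, hence is itself a basis for $\cC$. For (iii), the restriction $\natural|_{\cC}$ is an algebra homomorphism with image spanned by the $\natural(x_i)=A_i$, i.e.\ with image $M$; since it carries the basis $\{x_i\}_{i=0}^d$ of $\cC$ bijectively onto the basis $\{A_i\}_{i=0}^d$ of $M$, it is a linear bijection, hence an algebra isomorphism $\cC\to M$.

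I do not expect a genuine obstacle here: the whole argument reduces to relation $(T2)$, the symmetry of the intersection numbers, and the already-established existence of $\natural$. The only step doing real work is the use of $\natural$ to see linear independence of $\{x_i\}_{i=0}^d$ in $\cT$; a priori the defining relations $(T1)$--$(T3^*)$ could conceivably force collapses among the $x_i$, and pushing forward to $M_X(\C)$ is what rules this out and pins down $\dim\cC=d+1$.
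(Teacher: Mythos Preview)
Your argument is correct. The paper does not supply its own proof of this lemma; it simply cites Egge's results, so there is no paper-proof to compare against in detail. Your approach---using $(T2)$ and the symmetry $p^h_{ij}=p^h_{ji}$ for the algebra structure, pushing forward through $\natural$ to borrow linear independence from the $A_i$, and then invoking the invertibility of $|X|^{-1}Q$ for the second basis---is exactly the standard route and is in the spirit of Egge's original arguments. One minor remark: strictly speaking the transition matrix from the $x_j$ to the $e_i$ in \eqref{eq:ei_def} has $(i,j)$-entry $|X|^{-1}q_i(j)=|X|^{-1}Q_{ji}$, so it is $|X|^{-1}Q^{\mathsf T}$ rather than $|X|^{-1}Q$ under the paper's convention $Q_{ij}=q_j(i)$; this does not affect the conclusion since invertibility is preserved under transpose.
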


\begin{lem}\label{dual_commutative_subalgebras}
{\rm (See \cite[Propositions~5.7 and~10.3]{egge1})} The following (i)--(iii) hold.
\renumerate
    \item The elements $\{x_i^*\}_{i=0}^d$ form a basis for a commutative subalgebra $\cC^*$ of $\cT$.
    \item The elements $\{e_i^*\}_{i=0}^d$ form a basis for $\cC^*$.
    \item The restriction of $\natural$ to $\cC^*$ induces an algebra isomorphism $\cC^* \to M^*$.
\eenumerate
\end{lem}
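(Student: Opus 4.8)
The plan is to establish Lemma~\ref{dual_commutative_subalgebras} by mirroring the argument behind Lemma~\ref{commutative_subalgebras}, transporting the relevant identities through the surjection $\natural$ of Lemma~\ref{hom_exists} and invoking the fact, recalled in Section~\ref{s:preliminaries}, that $\{A_i^*\}_{i=0}^d$ is a basis for $M^*$. For part (i), let $\cC^*$ denote the $\C$-span of $\{x_i^*\}_{i=0}^d$ inside $\cT$. By $(T1)$ we have $x_0^* = 1 \in \cC^*$, and by $(T2^*)$ the span $\cC^*$ is closed under multiplication, so $\cC^*$ is a subalgebra containing $1$. Commutativity of $\cC^*$ follows from $(T2^*)$ once we note that the Krein parameters are symmetric in their lower indices, $(p^h_{ij})^* = (p^h_{ji})^*$, which is immediate from $E_i \circ E_j = E_j \circ E_i$ in \eqref{krein_parameter_def}. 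To see that $\{x_i^*\}_{i=0}^d$ is linearly independent (hence a basis for $\cC^*$, of size $d+1$), suppose $\sum_{i=0}^d c_i x_i^* = 0$; applying $\natural$ yields $\sum_{i=0}^d c_i A_i^* = 0$ in $M^*$, and since $\{A_i^*\}_{i=0}^d$ is a basis of $M^*$, all $c_i$ vanish.

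For part (ii), each $e_i^*$ lies in $\cC^*$ by its definition in \eqref{eq:ei_def}. To conclude that $\{e_i^*\}_{i=0}^d$ is again a basis, it suffices to show that the passage from $\{x_i^*\}_{i=0}^d$ to $\{e_i^*\}_{i=0}^d$ is invertible, i.e.\ that the relation $x_i^* = \sum_{j=0}^d p_i^*(j)\, e_j^*$ holds in $\cT$. Substituting \eqref{eq:ei_def} into the right-hand side and comparing coefficients of $x_k^*$, this reduces to the scalar identity $\sum_{j=0}^d p_i^*(j)\, q_j^*(k) = |X|\,\delta_{ik}$, which is precisely the statement that the change-of-basis matrices between $\{A_i^*\}_{i=0}^d$ and $\{E_i^*\}_{i=0}^d$ in $M^*$ are mutually inverse (equivalently, that $P$ and $|X|^{-1}Q$ are inverse, cf.\ \eqref{qi_equals_pi*}). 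Since $\{x_i^*\}_{i=0}^d$ is a basis of $\cC^*$ and $\{e_i^*\}_{i=0}^d$ consists of $d+1$ vectors spanning $\cC^*$, the latter is also a basis.

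For part (iii), the restriction $\natural|_{\cC^*}$ has image contained in $M^*$, since $\natural(x_i^*) = A_i^*$; it is onto $M^*$ because $\{A_i^*\}_{i=0}^d$ spans $M^*$; and it carries the basis $\{x_i^*\}_{i=0}^d$ of $\cC^*$ to the basis $\{A_i^*\}_{i=0}^d$ of $M^*$, so it is injective. A bijective algebra homomorphism is an algebra isomorphism, giving $\cC^* \cong M^*$. The only genuine subtlety in the argument is the linear independence in part (i): the defining relations $(T1)$--$(T3^*)$ alone do not obviously rule out a collapse among the $x_i^*$, so one really must feed them through $\natural$ and appeal to the external fact that $\{A_i^*\}_{i=0}^d$ is already known to be a basis of $M^*$. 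One might hope to bypass this by a formal self-duality of the presentation interchanging starred and unstarred symbols, but such a map is an automorphism of $\cT$ only when $\Gamma$ is self-dual, so it is unavailable in the present generality; the direct argument above is the cleaner route.
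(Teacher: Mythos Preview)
Your proof is correct. Note, however, that the paper does not supply its own proof of this lemma: it simply cites \cite[Propositions~5.7 and~10.3]{egge1} and moves on. So there is no in-paper argument to compare against; what you have written is a clean, self-contained justification that the paper elects to outsource. Your use of $\natural$ to import linear independence from the known basis $\{A_i^*\}_{i=0}^d$ of $M^*$ is exactly the right idea and is in the spirit of Egge's original treatment.
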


The next lemmas are consequences of Lemmas~\ref{commutative_subalgebras} and~\ref{dual_commutative_subalgebras}.

\begin{lem}\label{e_sum_1}
{\rm (See \cite[Propositions~5.3 and~5.6]{egge1})} The following (i), (ii) hold in $\cT$.
\renumerate
    \item $\sum_{i=0}^d e_i = 1$.
    \item $\sum_{i=0}^d e_i^* = 1$.
\eenumerate
\end{lem}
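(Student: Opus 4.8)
The plan is to prove (i) and obtain (ii) by the dual argument. First I would note that by Lemma~\ref{commutative_subalgebras}(ii) the element $\sum_{i=0}^d e_i$ lies in the commutative subalgebra $\cC$, and by relation $(T1)$ so does $1 = x_0$. By Lemma~\ref{commutative_subalgebras}(iii) the restriction $\natural|_{\cC}:\cC\to M$ is an algebra isomorphism, hence injective, so it suffices to check that $\natural$ sends $\sum_{i=0}^d e_i$ and $1$ to the same element of $M$. Using Lemma~\ref{hom_exists} and \eqref{e:ei_formulas}, $\natural\bigl(\sum_{i=0}^d e_i\bigr) = \sum_{i=0}^d E_i = I = \natural(1)$. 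Injectivity of $\natural|_{\cC}$ then forces $\sum_{i=0}^d e_i = 1$, which is (i).

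For (ii) I would run the identical argument with $\cC^*$ in place of $\cC$: by Lemma~\ref{dual_commutative_subalgebras}(ii) we have $\sum_{i=0}^d e_i^* \in \cC^*$, by $(T1)$ we have $1 = x_0^* \in \cC^*$, and by Lemma~\ref{dual_commutative_subalgebras}(iii) the map $\natural|_{\cC^*}:\cC^*\to M^*$ is an isomorphism. Since $\natural\bigl(\sum_{i=0}^d e_i^*\bigr) = \sum_{i=0}^d E_i^* = I = \natural(1)$ (the middle equality holding by the construction of the $E_i^*$), injectivity of $\natural|_{\cC^*}$ gives $\sum_{i=0}^d e_i^* = 1$.

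If one prefers to avoid invoking the isomorphism, there is an equally short direct route: expanding $e_i = |X|^{-1}\sum_{j=0}^d q_i(j)x_j$ gives $\sum_{i=0}^d e_i = |X|^{-1}\sum_{j=0}^d\bigl(\sum_{i=0}^d q_i(j)\bigr)x_j$, and from $\sum_{i=0}^d E_i = I = A_0$ together with the fact that $\{A_j\}_{j=0}^d$ is a basis for $M$ one reads off $\sum_{i=0}^d q_i(j) = |X|\,\delta_{0j}$, whence $\sum_{i=0}^d e_i = x_0 = 1$; the dual case is analogous, using $\{A_j^*\}_{j=0}^d$ as a basis for $M^*$. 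I do not expect a real obstacle here; the only point to handle with care is that the identity $1$ of $\cT$ genuinely lies in $\cC$ (respectively $\cC^*$), which is precisely what relation $(T1)$ supplies and is what licenses the injectivity argument.
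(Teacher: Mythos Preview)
Your proof is correct and follows essentially the same approach as the paper: the paper's one-line proof invokes \eqref{e:ei_formulas} together with Lemmas~\ref{commutative_subalgebras} and~\ref{dual_commutative_subalgebras}, which amounts precisely to your argument of pulling back $\sum_i E_i = I$ through the isomorphism $\natural|_{\cC}$ (and dually through $\natural|_{\cC^*}$). Your care in noting that $1 = x_0 \in \cC$ via $(T1)$ makes explicit the one point the paper leaves implicit.
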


\begin{proof}
Follows from \eqref{e:ei_formulas} and Lemmas~\ref{commutative_subalgebras} and~\ref{dual_commutative_subalgebras}.
\end{proof}

\begin{lem}\label{e_orthogonal}
{\rm (See \cite[Propositions~5.3 and~5.6]{egge1})} For $0 \leq i,j \leq d $, the following (i), (ii) hold in $\cT$.
\renumerate
    \item $e_ie_j = \delta_{ij}e_i$.
    \item $e_i^*e_j^* = \delta_{ij}e_j^*$.
\eenumerate
\end{lem}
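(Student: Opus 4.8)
The plan is to obtain both identities by transporting the corresponding relations for the primitive idempotents of $\Gamma$ back through the homomorphism $\natural$, using the isomorphisms already recorded in Lemmas~\ref{commutative_subalgebras} and~\ref{dual_commutative_subalgebras}. In effect, $\natural$ is injective on the relevant commutative subalgebras, so any polynomial identity among the $E_i$ that holds in $M$ must already hold among the $e_i$ in $\cT$, and likewise on the dual side.

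For part (i), I would argue as follows. By Lemma~\ref{commutative_subalgebras}, the elements $\{e_i\}_{i=0}^d$ span a commutative subalgebra $\cC$ of $\cT$, and the restriction $\natural|_{\cC}\colon \cC \to M$ is an algebra isomorphism with $e_i \mapsto E_i$ for $0 \leq i \leq d$. Since $\cC$ is a subalgebra, the product $e_ie_j$ lies in $\cC$; applying $\natural$ and using that it is an algebra homomorphism gives $\natural(e_ie_j) = E_iE_j = \delta_{ij}E_i = \natural(\delta_{ij}e_i)$, where the middle equality is \eqref{e:ei_formulas}. Because $\natural|_{\cC}$ is injective and both $e_ie_j$ and $\delta_{ij}e_i$ belong to $\cC$, we conclude $e_ie_j = \delta_{ij}e_i$.

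For part (ii) I would run the mirror-image argument with stars, invoking Lemma~\ref{dual_commutative_subalgebras} in place of Lemma~\ref{commutative_subalgebras}: there $\{e_i^*\}_{i=0}^d$ is a basis for a commutative subalgebra $\cC^*$ of $\cT$, and $\natural|_{\cC^*}\colon \cC^* \to M^*$ is an isomorphism sending $e_i^* \mapsto E_i^*$. Since $E_i^*E_j^* = \delta_{ij}E_i^*$ holds in $M^*$ by construction of the dual primitive idempotents, applying $\natural$ to $e_i^*e_j^* \in \cC^*$ and using injectivity of $\natural|_{\cC^*}$ yields $e_i^*e_j^* = \delta_{ij}e_j^*$ (using $\delta_{ij}e_i^* = \delta_{ij}e_j^*$).

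I do not expect a genuine obstacle here: all the substantive content — producing $\cC$ and $\cC^*$ and establishing that $\natural$ restricts to an isomorphism on each — is already packaged in the cited lemmas (which in turn rest on Egge's results). The only point needing a word of care is the observation that $e_ie_j \in \cC$, respectively $e_i^*e_j^* \in \cC^*$, so that injectivity of the restricted map can be applied; this is immediate from the subalgebra property.
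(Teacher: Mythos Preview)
Your proposal is correct and takes essentially the same approach as the paper: the paper's proof simply cites \eqref{e:ei_formulas} together with Lemmas~\ref{commutative_subalgebras} and~\ref{dual_commutative_subalgebras}, which is exactly the transport-through-$\natural$ argument you spell out in detail.
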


\begin{proof}
Follows from \eqref{e:ei_formulas} and Lemmas~\ref{commutative_subalgebras} and~\ref{dual_commutative_subalgebras}.
\end{proof}

\begin{lem}\label{x1_sum_eigenvalues}
{\rm (See \cite[Propositions~5.3 and~5.6]{egge1})} For $d \geq 1$, the following (i), (ii) hold in $\cT$.
\renumerate
    \item $x_1 = \sum_{i=0}^d \theta_i e_i$.
    \item $x_1^* = \sum_{i=0}^d \theta_i^*e_i^*.$
\eenumerate
\end{lem}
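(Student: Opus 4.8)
The plan is to deduce both statements from the commutative-subalgebra isomorphisms of Lemmas~\ref{commutative_subalgebras} and~\ref{dual_commutative_subalgebras}, in exact parallel with the proofs already given for Lemmas~\ref{e_sum_1} and~\ref{e_orthogonal}. For part (i), note first that $x_1 \in \cC$ by Lemma~\ref{commutative_subalgebras}(i), and that $\{e_i\}_{i=0}^d$ is a basis for $\cC$ by Lemma~\ref{commutative_subalgebras}(ii). Hence there are unique scalars $c_0,\dots,c_d \in \C$ with $x_1 = \sum_{i=0}^d c_i e_i$. Apply the homomorphism $\natural$ of Lemma~\ref{hom_exists}: since $\natural(x_1) = A_1 = A$ and $\natural(e_i) = E_i$, we obtain $A = \sum_{i=0}^d c_i E_i$ in $M$. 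Comparing this with \eqref{A1_sum_of_Ei} and using that $\{E_i\}_{i=0}^d$ is linearly independent, we conclude $c_i = \theta_i$ for $0 \leq i \leq d$, which gives (i). Equivalently, one may phrase this as: the restriction $\natural|_{\cC}:\cC \to M$ is an algebra isomorphism by Lemma~\ref{commutative_subalgebras}(iii), and (i) is simply the preimage under this isomorphism of the identity \eqref{A1_sum_of_Ei}.

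For part (ii) the argument is the same with stars. We have $x_1^* \in \cC^*$ and $\{e_i^*\}_{i=0}^d$ is a basis for $\cC^*$ by Lemma~\ref{dual_commutative_subalgebras}, so write $x_1^* = \sum_{i=0}^d c_i^* e_i^*$ for unique scalars $c_i^*$. Applying $\natural$ and using $\natural(x_1^*) = A_1^* = A^*$ and $\natural(e_i^*) = E_i^*$ yields $A^* = \sum_{i=0}^d c_i^* E_i^*$ in $M^*$. But the scalars $\theta_i^*$ were \emph{defined} (in Section~\ref{s:preliminaries}) precisely by $A^* = \sum_{i=0}^d \theta_i^* E_i^*$, and since $\{E_i^*\}_{i=0}^d$ is a basis for $M^*$, we get $c_i^* = \theta_i^*$, proving (ii).

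I expect no genuine obstacle here: the result is a direct corollary of the fact that $\natural$ restricts to an isomorphism on each of the commutative subalgebras $\cC$ and $\cC^*$, combined with the primal identity \eqref{A1_sum_of_Ei} and the definition of the dual eigenvalues $\theta_i^*$. The only points needing care have already been established upstream, namely that the $e_i$ (resp.\ $e_i^*$) really do form a basis of $\cC$ (resp.\ $\cC^*$) and that $\natural$ is injective on these subalgebras. One small sanity check is the hypothesis $d \geq 1$, which is exactly what guarantees that $x_1$, $x_1^*$ (equivalently $A = A_1$, $A^* = A_1^*$) are in play at all.
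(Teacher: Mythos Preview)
Your proof is correct and follows essentially the same approach as the paper, which simply cites \eqref{A1_sum_of_Ei} together with Lemmas~\ref{commutative_subalgebras} and~\ref{dual_commutative_subalgebras}. You have merely unpacked the details of that one-line argument.
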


\begin{proof}
Follows from \eqref{A1_sum_of_Ei} and Lemmas~\ref{commutative_subalgebras} and~\ref{dual_commutative_subalgebras}.
\end{proof}

\begin{lem}\label{sum_of_xi}
The following (i), (ii) hold in $\cT$.
\renumerate
    \item $e_0 = |X|^{-1}\sum_{i=0}^d x_i$.
    \item $e_0^* = |X|^{-1} \sum_{i=0}^d x_i^*$.
\eenumerate
\end{lem}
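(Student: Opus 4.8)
The plan is to evaluate $e_0$ and $e_0^*$ directly from their definitions in \eqref{eq:ei_def}, using the known values of the coefficients $q_0(j)$ and $q_0^*(j)$. Nothing about the structure of $\cT$ beyond Definition~\ref{cTDef} is needed here; in particular there is no need to invoke the homomorphism $\natural$ or any of Lemmas~\ref{commutative_subalgebras}--\ref{x1_sum_eigenvalues}.

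For part (i), I would start from \eqref{eq:ei_def} with $i = 0$, which reads $e_0 = |X|^{-1}\sum_{j=0}^d q_0(j)x_j$. By \eqref{e: p0(j)=1} we have $q_0(j) = 1$ for $0 \le j \le d$. Substituting gives $e_0 = |X|^{-1}\sum_{j=0}^d x_j$, which is the claim.

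For part (ii), I would again start from \eqref{eq:ei_def} with $i = 0$, which reads $e_0^* = |X|^{-1}\sum_{j=0}^d q_0^*(j)x_j^*$. The one step that takes a moment is evaluating $q_0^*(j)$, since this coefficient is not listed explicitly in the excerpt: by \eqref{qi_equals_pi*} we have $q_i^*(j) = p_i(j)$, and taking $i = 0$ and then applying \eqref{e: p0(j)=1} gives $q_0^*(j) = p_0(j) = 1$ for $0 \le j \le d$. Substituting gives $e_0^* = |X|^{-1}\sum_{j=0}^d x_j^*$, as desired.

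There is essentially no obstacle: the argument is a two-line substitution in each case. The only point to watch is keeping the dual coefficients straight — $q_0^*(j)$ is the coefficient expressing $E_0^*$ in terms of $\{A_j^*\}_{j=0}^d$, so it is the identity $q_i^*(j) = p_i(j)$ (rather than $q_i(j) = p_i^*(j)$) from \eqref{qi_equals_pi*} that is needed, evaluated at $i = 0$.
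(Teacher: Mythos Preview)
Your proof is correct and matches the paper's own argument exactly: the paper's proof simply cites \eqref{qi_equals_pi*}, \eqref{e: p0(j)=1}, and \eqref{eq:ei_def}, which are precisely the three ingredients you use.
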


\begin{proof}
Follows from~\eqref{qi_equals_pi*},~\eqref{e: p0(j)=1}, and~\eqref{eq:ei_def}.
\end{proof}


\section{The Hypercube $\cQ_d$}\label{s:hypercube}

In this section, we recall a family of distance-regular graphs called the hybercubes, and we review some results related to the associated Terwilliger Algebra. For more information, we refer the reader to \cite{bcn,go,sloane}.

\begin{defn}\label{QdDef}
Assume $d \geq 0$. Let $\cQ_d$ denote the graph with vertex set $X$ consisting of $d$-tuples $(a_1,a_2,\ldots,a_d)$ such that $a_i \in \{-1,1\}$ ($1 \leq i \leq d$). Two vertices are adjacent in $\cQ_d$ whenever they differ in exactly one coordinate. The graph $\cQ_d$ is called the \textit{$d$-cube} or a \textit{hypercube}.
\end{defn}

The graph $\cQ_d$ has $2^d$ vertices. Furthermore, $\cQ_d$ is distance-regular (see \cite[Proposition 1.12.1]{bcn}).

From now through Lemma \ref{A1*_minimal_polynomial}, we consider the distance-regular graph $\Gamma = \cQ_d$ with $d \geq 1$. We now recall the intersection numbers of $\cQ_d$.

\begin{prop}\label{intnumformulua}
{\rm (See \cite[p. 238]{sloane})} For $0 \leq h,i,j \leq d$ we have
$$p^h_{ij} = \begin{cases}
0 & \text{if $h+i+j$ is odd;} \\
\binom{h}{(i-j+h)/2}\binom{d-h}{(i+j-h)/2} & \text{if $h+i+j$ is even}.
\end{cases}$$
In the lines above, we interpret $\binom{n}{m} = 0$ if $m < 0$ or $m > n.$
\end{prop}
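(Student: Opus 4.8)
The plan is to compute $p^h_{ij}$ directly from its combinatorial definition, using the concrete model of $\cQ_d$ from Definition~\ref{QdDef}. Since $\cQ_d$ is distance-regular, the count is independent of the chosen pair of vertices at distance $h$, so I would first fix a convenient pair: take $x = (1,1,\ldots,1)$ and let $y$ be the vertex with $-1$ in its first $h$ coordinates and $1$ in the remaining $d-h$ coordinates, so that $\partial(x,y) = h$. I would then partition the coordinate set $\{1,\ldots,d\}$ into the block $A = \{1,\ldots,h\}$, on which $x$ and $y$ disagree, and the block $B = \{h+1,\ldots,d\}$, on which $x$ and $y$ agree (both equal to $1$).

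Next, for an arbitrary vertex $z = (c_1,\ldots,c_d)$, I would let $a$ denote the number of indices $\ell \in A$ with $c_\ell = -1$ and $b$ the number of indices $\ell \in B$ with $c_\ell = -1$. Then $z$ differs from $x$ in exactly $a+b$ coordinates and from $y$ in exactly $(h-a)+b$ coordinates, so the requirements $\partial(x,z) = i$ and $\partial(y,z) = j$ translate into the linear system $a+b = i$, $h - a + b = j$. Solving gives $a = (i-j+h)/2$ and $b = (i+j-h)/2$.

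To finish, I would observe that if $h+i+j$ is odd then $a$ and $b$ are not integers, so no such $z$ exists and $p^h_{ij} = 0$. If $h+i+j$ is even, then once $a$ and $b$ are fixed by the formulas above, a vertex $z$ realizing them is obtained by independently choosing which $a$ of the $h$ positions in $A$ and which $b$ of the $d-h$ positions in $B$ are set to $-1$; hence the number of such $z$ is $\binom{h}{a}\binom{d-h}{b} = \binom{h}{(i-j+h)/2}\binom{d-h}{(i+j-h)/2}$. The stated convention $\binom{n}{m} = 0$ for $m<0$ or $m>n$ automatically absorbs the degenerate cases where $a$ or $b$ falls outside its admissible range $0 \le a \le h$, $0 \le b \le d-h$, giving the claimed formula in all cases.

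There is no real obstacle here; it is a routine counting argument. The only point requiring a word of justification is the reduction to a single convenient pair $x,y$, which is licensed by the already-recalled fact that $\cQ_d$ is distance-regular (so the count does not depend on the choice). Alternatively, one can note that the computation above goes through verbatim for an arbitrary pair of vertices at distance $h$ and yields an answer depending only on $h,i,j$, which simultaneously re-proves distance-regularity of $\cQ_d$.
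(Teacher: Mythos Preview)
Your argument is correct and is the standard direct combinatorial proof of this formula. The paper itself does not prove the proposition; it simply cites \cite[p.~238]{sloane}, so there is no ``paper's own proof'' to compare against beyond noting that your counting argument is precisely the one any reference would give.
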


We mention some consequences of Proposition~\ref{intnumformulua}.

\begin{cor}
For $0 \leq i \leq d$ we have $k_i = \binom{d}{i}.$
\label{k0_and_kd}
\end{cor}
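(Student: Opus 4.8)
The plan is to read $k_i$ off directly from its definition. Recall from Section~\ref{s:preliminaries} that $k_i = p^0_{ii}$, so it suffices to evaluate the intersection number $p^0_{ii}$ using the closed form in Proposition~\ref{intnumformulua}. First I would take $h = 0$ and the two lower indices both equal to $i$. Then $h + i + j = 2i$ is even, so we are in the second case of the formula, and
$$k_i = p^0_{ii} = \binom{0}{(i - i + 0)/2}\binom{d - 0}{(i + i - 0)/2} = \binom{0}{0}\binom{d}{i} = \binom{d}{i},$$
using $\binom{0}{0} = 1$. This already finishes the proof for $0 \le i \le d$.

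There is essentially no obstacle here; the computation is a single substitution. The only point worth a moment's care is that both binomial coefficients are being evaluated at legitimate arguments rather than triggering the convention $\binom{n}{m} = 0$ for $m < 0$ or $m > n$: the first has bottom argument $(i - i + 0)/2 = 0$, and the second has bottom argument $(i + i - 0)/2 = i$, which satisfies $0 \le i \le d$. As an independent sanity check, one can recall that $k_i$ counts the vertices of $\cQ_d$ at distance $i$ from a fixed vertex $x$; a vertex lies at distance $i$ from $x$ precisely when it differs from $x$ in exactly $i$ of the $d$ coordinates, and there are $\binom{d}{i}$ such vertices, in agreement with the formula above.
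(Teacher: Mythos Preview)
Your proof is correct and follows essentially the same approach as the paper, which simply states that the result follows from Proposition~\ref{intnumformulua}. You have merely spelled out the substitution $h=0$, $i=j$ in that formula explicitly.
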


\begin{proof}
Follows from Proposition~\ref{intnumformulua}.
\end{proof}

\begin{cor}\label{ph1j_value}
For $0 \leq h,i \leq d$ we have
$$p^h_{1i} = \begin{cases}
d-h & \text{if }h = i-1; \\
h & \text{if }h = i+1; \\
0 & \text{if }h \neq i \pm 1.
\end{cases}$$
\end{cor}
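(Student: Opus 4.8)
The plan is to specialize the general formula in Proposition~\ref{intnumformulua} to the case where one of the lower indices equals $1$, using the symmetry $p^h_{1i} = p^h_{i1}$. First I would observe that $h + i + 1$ is even precisely when $h$ and $i$ have opposite parity; in that case Proposition~\ref{intnumformulua} gives $p^h_{1i} = \binom{h}{(i+h-1)/2}\binom{d-h}{(i-h+1)/2}$, while if $h$ and $i$ have the same parity then $p^h_{1i}=0$. Since both $h = i-1$ and $h = i+1$ force $h$ and $i$ to have opposite parity, it then remains only to analyze the product of binomial coefficients in the opposite-parity case.

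Next I would determine when this product is nonzero, using the convention $\binom{n}{m}=0$ for $m<0$ or $m>n$. The factor $\binom{h}{(i+h-1)/2}$ is nonzero exactly when $0 \le (i+h-1)/2 \le h$, i.e.\ when $i \le h+1$ (the lower inequality being automatic, since $h$ and $i$ are not both zero when they have opposite parity). Similarly $\binom{d-h}{(i-h+1)/2}$ is nonzero exactly when $h \le i+1$ (the upper inequality $(i-h+1)/2 \le d-h$, equivalently $i+h+1 \le 2d$, being automatic from $0 \le h,i \le d$ together with opposite parity). Hence the product vanishes unless $|h-i| \le 1$, and combined with opposite parity this leaves only $h = i-1$ and $h = i+1$, which establishes the third case of the claim.

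Finally I would evaluate the two surviving cases explicitly. If $h=i-1$, then $(i+h-1)/2 = h$ and $(i-h+1)/2 = 1$, so $p^h_{1i} = \binom{h}{h}\binom{d-h}{1} = d-h$. If $h=i+1$, then $(i+h-1)/2 = h-1$ and $(i-h+1)/2 = 0$, so $p^h_{1i} = \binom{h}{h-1}\binom{d-h}{0} = h$. This gives the first two cases and completes the proof.

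I do not anticipate a genuine obstacle, as the argument is simply an unwinding of Proposition~\ref{intnumformulua}; the one point requiring a little care is verifying that the ``trivial'' inequalities among the arguments of the binomial coefficients (the lower bound on $(i+h-1)/2$ and the upper bound on $(i-h+1)/2$) hold automatically, so that the nonvanishing condition reduces cleanly to $|h-i|\le 1$. This is precisely where the opposite-parity hypothesis and the bounds $0 \le h,i \le d$ are used.
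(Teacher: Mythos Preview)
Your proposal is correct and takes essentially the same approach as the paper: the paper's proof is the single line ``Follows from Proposition~\ref{intnumformulua},'' and your argument simply unpacks that specialization in full detail.
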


\begin{proof}
Follows from Proposition~\ref{intnumformulua}.
\end{proof}

\begin{cor}\label{recurrence}
For $0 \leq i \leq d$ we have
\begin{align*}
    AA_{i} &= (i+1)A_{i+1} + (d-i+1)A_{i-1},
\end{align*}
where we recall \eqref{ai=0_if}.
\end{cor}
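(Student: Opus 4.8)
The plan is to specialize the Bose--Mesner multiplication rule \eqref{AiAjformula}. First I would take $j=i$ and replace the outer index by $1$, using $A=A_1$, to get
$AA_i=\sum_{h=0}^d p^h_{1i}A_h$. Next I would invoke Corollary~\ref{ph1j_value}, which says that for $0\le h\le d$ the intersection number $p^h_{1i}$ vanishes unless $h=i-1$ or $h=i+1$, and that in those two cases it equals $d-h=d-i+1$ and $h=i+1$, respectively. Substituting these values into the sum collapses it to $AA_i=(i+1)A_{i+1}+(d-i+1)A_{i-1}$, which is exactly the asserted identity.

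The only point needing a moment's care is the behavior at the two ends of the index range. When $i=0$ there is no summand with $h=i-1=-1$ in $\sum_{h=0}^d$, and when $i=d$ there is no summand with $h=i+1=d+1$; in each case the convention \eqref{ai=0_if} that $A_m=0$ for $m<0$ or $m>d$ makes the corresponding term in the stated formula a zero matrix (the spurious coefficient $d+1$ multiplies $0$), so the displayed equation remains literally correct for all $0\le i\le d$. I would note this explicitly so that the statement holds without an exceptional-case caveat.

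I do not expect any genuine obstacle: the corollary is an immediate consequence of \eqref{AiAjformula} and Proposition~\ref{intnumformulua} (packaged as Corollary~\ref{ph1j_value}), and the whole argument amounts to a single substitution plus the boundary bookkeeping just described.
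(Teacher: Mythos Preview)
Your argument is correct and matches the paper's own proof, which simply cites \eqref{AiAjformula} and Corollary~\ref{ph1j_value}. Your explicit handling of the boundary cases $i=0$ and $i=d$ via the convention \eqref{ai=0_if} is a welcome elaboration of what the paper leaves implicit.
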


\begin{proof}
Follows from \eqref{AiAjformula} and Corollary~\ref{ph1j_value}.
\end{proof}

Let $z$ denote an indeterminate, and let $\C[z]$ denote the algebra of polynomials in $z$ that have coefficients in $\C$. Motivated by Corollary~\ref{recurrence}, we now define some polynomials in $\C[z]$.

\begin{defn}\label{Fi_def}
Let $\{F_i\}_{i=0}^{d+1}$ denote polynomials in $\C[z]$ such that $F_0 = 1$, $F_1 = z,$ and
$$zF_i = (i+1)F_{i+1} + (d-i+1)F_{i-1} \qquad \qquad (1 \leq i \leq d).$$
\end{defn}

We remark that the polynomial $F_i$ has degree $i$ and leading coefficient $\frac{1}{i!}$ ($0 \leq i \leq d+1$).

\begin{lem}\label{Ai_is_polynomial}
We have $F_i(A) = A_i$ ($0 \leq i \leq d$), and $F_{d+1}(A) = 0$. Furthermore, $p_i(j) = F_i(\theta_j)$ ($0 \leq i,j \leq d$).
\end{lem}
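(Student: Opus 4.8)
The plan is to prove the three assertions in the stated order, using the defining recurrence for $\{F_i\}$ (Definition~\ref{Fi_def}) together with the recurrence $AA_i = (i+1)A_{i+1} + (d-i+1)A_{i-1}$ from Corollary~\ref{recurrence}. First I would show $F_i(A) = A_i$ for $0 \leq i \leq d$ by induction on $i$. The base cases $i=0$ and $i=1$ hold since $F_0 = 1$ gives $F_0(A) = I = A_0$, and $F_1 = z$ gives $F_1(A) = A = A_1$ (using $d \geq 1$). For the inductive step, suppose $F_{i-1}(A) = A_{i-1}$ and $F_i(A) = A_i$ for some $1 \leq i \leq d$; I want $F_{i+1}(A) = A_{i+1}$ (when $i+1 \leq d$). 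Rearranging the defining recurrence for $F_i$ gives
$$F_{i+1} = \frac{1}{i+1}\bigl(zF_i - (d-i+1)F_{i-1}\bigr),$$
so evaluating at $A$ and applying the induction hypothesis,
$$F_{i+1}(A) = \frac{1}{i+1}\bigl(AA_i - (d-i+1)A_{i-1}\bigr) = \frac{1}{i+1}\bigl((i+1)A_{i+1}\bigr) = A_{i+1},$$
where the middle equality is Corollary~\ref{recurrence}. This completes the induction.

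Next I would establish $F_{d+1}(A) = 0$. Taking $i = d$ in the defining recurrence for $F_i$ gives $zF_d = (d+1)F_{d+1} + F_{d-1}$, hence
$$F_{d+1}(A) = \frac{1}{d+1}\bigl(AF_d(A) - F_{d-1}(A)\bigr) = \frac{1}{d+1}\bigl(AA_d - A_{d-1}\bigr),$$
using the part already proved. On the other hand, Corollary~\ref{recurrence} with $i = d$ reads $AA_d = (d+1)A_{d+1} + A_{d-1} = A_{d-1}$, since $A_{d+1} = 0$ by~\eqref{ai=0_if}. Substituting gives $F_{d+1}(A) = 0$.

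Finally, for the claim $p_i(j) = F_i(\theta_j)$ for $0 \leq i,j \leq d$: by~\eqref{e:pij_and_qij} we have $A_i = \sum_{j=0}^d p_i(j)E_j$, and by the first part $A_i = F_i(A)$. Applying $F_i$ to~\eqref{A1_sum_of_Ei}, namely $A = \sum_{j=0}^d \theta_j E_j$, and using that the $E_j$ are orthogonal idempotents summing to $I$ (so that $g(A) = \sum_{j=0}^d g(\theta_j) E_j$ for any polynomial $g$), we get $F_i(A) = \sum_{j=0}^d F_i(\theta_j) E_j$. Comparing the two expansions of $A_i$ in the basis $\{E_j\}_{j=0}^d$ of $M$ yields $p_i(j) = F_i(\theta_j)$. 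None of these steps presents a serious obstacle; the only point requiring a little care is bookkeeping the boundary conventions~\eqref{ai=0_if} when $i$ reaches $d$, which is exactly what makes $F_{d+1}(A) = 0$ come out.
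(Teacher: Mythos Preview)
Your proof is correct and follows essentially the same approach as the paper: induction on $i$ comparing the recurrence in Definition~\ref{Fi_def} with Corollary~\ref{recurrence} for the first two assertions, and expanding $F_i(A)$ in the basis $\{E_j\}$ via \eqref{A1_sum_of_Ei} and \eqref{e:ei_formulas} and comparing with \eqref{e:pij_and_qij} for the third. The paper's version is simply terser, leaving the induction details implicit.
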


\begin{proof}
By construction, $F_0(A) = I = A_0$ and $F_1(A) = A = A_1.$ To see that $A_i = F_i(A)$ ($2 \leq i \leq d$) and $F_{d+1}(A) = 0,$ use induction and compare Corollary~\ref{recurrence} with Definition~\ref{Fi_def}.

To see that $F_i(\theta_j) = p_i(j)$, apply $F_i$ to both sides of \eqref{A1_sum_of_Ei} and simplify with \eqref{e:ei_formulas} to obtain
$$F_i(A) = \sum_{j=0}^d F_i(\theta_j)E_j.$$
Comparing the above with \eqref{e:pij_and_qij}, it follows that $F_i(\theta_j) = p_i(j)$.
\end{proof}

We next consider the eigenvalues of $\cQ_d$.

\begin{lem}\label{eigenvalue_formula}
{\rm (See \cite[Proposition 9.2.1]{bcn})} For $0 \leq i \leq d$ we have $\theta_i = d-2i$.
\end{lem}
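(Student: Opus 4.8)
The statement to prove is Lemma~\ref{eigenvalue_formula}, that $\theta_i = d - 2i$ for $0 \le i \le d$.

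The plan is to use Lemma~\ref{Ai_is_polynomial}, which tells us that $F_{d+1}(A) = 0$, so every eigenvalue of $A$ is a root of the polynomial $F_{d+1}$. Since $A$ has exactly $d+1$ distinct eigenvalues $\theta_0 > \theta_1 > \cdots > \theta_d$ and $F_{d+1}$ has degree $d+1$, the eigenvalues of $A$ are precisely the $d+1$ roots of $F_{d+1}$, each with multiplicity one. So it suffices to show that the polynomial $F_{d+1}(z)$ has the $d+1$ distinct roots $d, d-2, d-4, \ldots, -d$ (i.e.\ the numbers $d - 2i$ for $0 \le i \le d$), since these are already in decreasing order and there are exactly $d+1$ of them. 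Equivalently, I want to show $F_{d+1}(d - 2i) = 0$ for $0 \le i \le d$.

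First I would exhibit an explicit product formula or generating-function identity for the $F_i$. The recurrence $zF_i = (i+1)F_{i+1} + (d-i+1)F_{i-1}$ with $F_0 = 1$, $F_1 = z$ is the standard three-term recurrence for (a rescaling of) the Krawtchouk polynomials associated with $\cQ_d$, and it is classical that the distance matrices $A_i$ of $\cQ_d$ are diagonalized on the eigenspace labeled by $i$ with eigenvalue $d - 2i$. Concretely, one clean route is to guess the generating function $\sum_{i=0}^{d+1} F_i(z)\, t^i = (1+t)^{(d+z)/2}(1-t)^{(d-z)/2}$ when $z = d - 2j$ (so the exponents $(d+z)/2 = d-j$ and $(d-z)/2 = j$ are nonnegative integers summing to $d$), check that this finite polynomial in $t$ of degree $d$ satisfies the recurrence coefficientwise, and observe its $t^{d+1}$-coefficient is $0$, giving $F_{d+1}(d-2j) = 0$. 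Verifying the recurrence reduces to differentiating $(1+t)^{d-j}(1-t)^{j}$ and matching coefficients of $t^i$ — a short computation. Alternatively, and perhaps more in keeping with the paper's self-contained style, I would argue directly: define $G_i(j)$ to be the value that $p_i(j)$ "should" have, namely the Krawtchouk value, show by induction using the recurrence that $F_i(d-2j) = G_i(j)$, and then invoke the known fact from Corollary~\ref{k0_and_kd} or from \cite{bcn} that the $p_i(j)$ satisfy the same recurrence, pinning down the eigenvalues. But the cleanest is to cite \cite[Proposition 9.2.1]{bcn} directly, as the lemma statement already does.

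Given that the lemma is stated with a citation to \cite[Proposition 9.2.1]{bcn}, the honest proof is simply to appeal to that reference: the eigenvalues of the adjacency matrix of $\cQ_d$ are well known to be $d - 2i$ for $0 \le i \le d$, with the $\theta_i$ already ordered so that $\theta_0 = d > \theta_1 = d-2 > \cdots > \theta_d = -d$. If a self-contained argument is wanted, the obstacle is entirely in producing and verifying the closed form for $F_{d+1}$ (or the generating function above); once that is in hand, the conclusion is immediate from the degree-and-root-count argument of the first paragraph. I expect the main obstacle to be bookkeeping with the binomial-sum identity when checking the recurrence coefficientwise, but this is routine and not conceptually deep.
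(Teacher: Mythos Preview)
The paper gives no proof of this lemma at all; it simply records the citation to \cite[Proposition~9.2.1]{bcn}, which you correctly identify as the intended argument. Your sketched self-contained alternative---using Lemma~\ref{Ai_is_polynomial} to reduce to finding the roots of $F_{d+1}$, then verifying via the generating function $(1+t)^{d-j}(1-t)^j$ that $F_{d+1}(d-2j)=0$---is valid and noncircular (Lemma~\ref{Ai_is_polynomial} does not rely on Lemma~\ref{eigenvalue_formula}), but it goes well beyond what the paper actually does.
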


The following definition is for notational convenience.

\begin{defn}\label{Phi_def}
Define $\Phi_d \in \C[z]$ by
\begin{equation}
    \label{e:definition_of_phid}
    \Phi_d = \prod_{i=0}^d\big(z-(d-2i)\big).
\end{equation}
\end{defn}

\begin{lem}\label{A1_minimal_polynomial}
The minimal polynomial of $A$ is equal to $\Phi_d$.
\end{lem}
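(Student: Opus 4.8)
The plan is to show that $\Phi_d$ both annihilates $A$ and is the monic polynomial of least degree that does so. First I would check that $\Phi_d(A) = 0$. By Lemma~\ref{eigenvalue_formula}, the distinct eigenvalues of $A$ are $\theta_i = d - 2i$ for $0 \leq i \leq d$, so $\Phi_d = \prod_{i=0}^d (z - \theta_i)$ is exactly the product over the distinct eigenvalues of $A$. Applying this polynomial to the spectral decomposition $A = \sum_{i=0}^d \theta_i E_i$ from \eqref{A1_sum_of_Ei} and simplifying with \eqref{e:ei_formulas} gives $\Phi_d(A) = \sum_{i=0}^d \Phi_d(\theta_i) E_i$, and each $\Phi_d(\theta_i) = 0$ since $(z - \theta_i)$ is one of the factors. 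Hence $\Phi_d(A) = 0$.

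Next I would argue minimality. Since $A$ has $d+1$ distinct eigenvalues $\theta_0, \dots, \theta_d$ (as noted in the excerpt, because $A$ generates the $(d+1)$-dimensional algebra $M$), the minimal polynomial of $A$ must be divisible by $(z - \theta_i)$ for each $i$, hence divisible by $\prod_{i=0}^d(z-\theta_i) = \Phi_d$. Combined with $\Phi_d(A) = 0$ and the fact that $\Phi_d$ is monic, this forces the minimal polynomial of $A$ to equal $\Phi_d$. Alternatively, one can invoke Lemma~\ref{Ai_is_polynomial}: since $F_{d+1}(A) = 0$ and $F_{d+1}$ has degree $d+1$ with leading coefficient $1/(d+1)!$, the polynomial $(d+1)!\,F_{d+1}$ is monic of degree $d+1$ and annihilates $A$; as the minimal polynomial has degree at least $d+1$ (by the $d+1$ distinct eigenvalues) and at most $d+1$, it must be this monic degree-$(d+1)$ annihilator, and by uniqueness of the minimal polynomial it coincides with $\Phi_d$.

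There is no real obstacle here; the only thing to be slightly careful about is making explicit why $A$ has $d+1$ \emph{distinct} eigenvalues, but this is already recorded in Section~\ref{s:preliminaries} (the matrix $A$ generates $M$, and $\dim M = d+1$), so it may simply be cited. I would present the short spectral-decomposition argument for $\Phi_d(A)=0$ together with the distinct-eigenvalue count for minimality, which together immediately yield the claim.
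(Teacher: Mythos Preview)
Your proposal is correct and takes essentially the same approach as the paper: the paper's proof simply reads ``Immediate from Lemma~\ref{eigenvalue_formula} and Definition~\ref{Phi_def},'' relying (as you do) on the fact that $A$ has the $d+1$ distinct eigenvalues $\theta_i = d-2i$, so its minimal polynomial is $\prod_{i=0}^d(z-\theta_i)=\Phi_d$. Your write-up just unpacks this one-line argument in more detail.
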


\begin{proof}
Immediate from Lemma~\ref{eigenvalue_formula} and Definition~\ref{Phi_def}.
\end{proof}

\begin{lem}\label{Fd+1_equals_phi}
We have $\Phi_d = (d+1)!F_{d+1}$.
\end{lem}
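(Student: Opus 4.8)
The plan is to compare the two polynomials degree by degree via their defining properties. Both $\Phi_d$ and $(d+1)!F_{d+1}$ are polynomials in $\C[z]$, so it suffices to show they agree. First I would note that by Lemma~\ref{Ai_is_polynomial} we have $F_{d+1}(A) = 0$, so $F_{d+1}$ is a polynomial that vanishes on $A$; hence the minimal polynomial of $A$ divides $F_{d+1}$. By Lemma~\ref{A1_minimal_polynomial}, the minimal polynomial of $A$ is $\Phi_d$, so $\Phi_d \mid F_{d+1}$ in $\C[z]$.

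Next I would examine degrees. The polynomial $\Phi_d = \prod_{i=0}^d (z - (d-2i))$ is a product of $d+1$ linear factors, so it is monic of degree $d+1$. The polynomial $F_{d+1}$ has degree $d+1$ and leading coefficient $\tfrac{1}{(d+1)!}$, as remarked after Definition~\ref{Fi_def}. Since $\Phi_d \mid F_{d+1}$ and both have degree $d+1$, the quotient is a nonzero constant; comparing leading coefficients, that constant is $\tfrac{1}{(d+1)!} / 1 = \tfrac{1}{(d+1)!}$. Therefore $F_{d+1} = \tfrac{1}{(d+1)!}\,\Phi_d$, which rearranges to $\Phi_d = (d+1)!\,F_{d+1}$.

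I do not expect any serious obstacle here; the only point requiring a little care is confirming that the remark about the leading coefficient of $F_{d+1}$ (namely $\tfrac{1}{(d+1)!}$) is available — it follows by an easy induction on the recurrence in Definition~\ref{Fi_def}, since passing from $F_i$ to $F_{i+1}$ multiplies the leading coefficient by $\tfrac{1}{i+1}$ (looking at the top-degree term of $zF_i = (i+1)F_{i+1} + (d-i+1)F_{i-1}$). One could equally avoid the divisibility argument entirely: since $\theta_0, \ldots, \theta_d$ are the $d+1$ distinct eigenvalues of $A$, Lemma~\ref{Ai_is_polynomial} gives $F_{d+1}(\theta_j) = p_{d+1}(j) = 0$ for $0 \le j \le d$ (using the convention that $A_{d+1} = 0$, so $p_{d+1}(j)=0$), so $F_{d+1}$ has the $d+1$ roots $d-2i$; being of degree $d+1$ with leading coefficient $\tfrac{1}{(d+1)!}$, it must equal $\tfrac{1}{(d+1)!}\prod_{i=0}^d(z-(d-2i)) = \tfrac{1}{(d+1)!}\Phi_d$. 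Either route is short.
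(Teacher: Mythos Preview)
Your main argument is correct and essentially identical to the paper's own proof: both use $F_{d+1}(A)=0$ from Lemma~\ref{Ai_is_polynomial} to get $\Phi_d \mid F_{d+1}$ via Lemma~\ref{A1_minimal_polynomial}, then compare degrees and leading coefficients. (Your alternative route is fine too, though note that Lemma~\ref{Ai_is_polynomial} as stated only gives $p_i(j)=F_i(\theta_j)$ for $0\le i\le d$, so the $i=d+1$ case would need the separate clause $F_{d+1}(A)=0$ anyway.)
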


\begin{proof}
By Lemma~\ref{Ai_is_polynomial}, $F_{d+1}(A)= 0$. By Lemma~\ref{A1_minimal_polynomial}, $\Phi_d$ must divide $F_{d+1}$ in $\C[z]$. Because both $\Phi_d$ and $F_{d+1}$ have degree $d+1$, one must be a scalar multiple of the other. Comparing the leading coefficients, the result follows.
\end{proof}

We have a comment.

\begin{lem}\label{Qd_is_selfdual}
{\rm (See \cite[p. 194]{bcn})} The hypercube $\cQ_d$ is self-dual. In other words, $p^h_{ij} = (p^{h}_{ij})^*$ ($0 \leq h,i,j \leq d$).
\end{lem}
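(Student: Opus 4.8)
The plan is to prove that $\cQ_d$ is self-dual by invoking the general criterion for self-duality of a distance-regular graph in terms of the matrices $P$ and $Q$ introduced in Section~\ref{s:preliminaries}. Recall that $\Gamma$ is self-dual precisely when $p^h_{ij}=(p^h_{ij})^*$ for all $0\le h,i,j\le d$, and that by \cite[p.~49]{bcn} this happens exactly when $P=Q$, where $P_{ij}=p_j(i)$ and $Q_{ij}=q_j(i)$. Since $P$ and $|X|^{-1}Q$ are inverse matrices (as stated in Section~\ref{s:preliminaries}), it suffices to show $P^2=|X|I$, i.e. that $P$ is, up to the scalar $2^d$, an involution. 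So the heart of the argument is a computation of the eigenvalue matrix $P$ for the hypercube and a verification of this relation.

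First I would record that $P_{ij}=p_j(i)=F_j(\theta_i)=F_j(d-2i)$ by Lemma~\ref{Ai_is_polynomial} and Lemma~\ref{eigenvalue_formula}. The polynomials $F_j$ are, up to normalization, the Krawtchouk polynomials: one checks from the three-term recurrence in Definition~\ref{Fi_def} together with the substitution $z=d-2i$ that
$$
j!\,F_j(d-2i)=\sum_{\ell=0}^{j}(-1)^\ell\binom{i}{\ell}\binom{d-i}{j-\ell},
$$
which is the standard Krawtchouk polynomial $K_j(i)$ (with the convention $\binom{n}{m}=0$ for $m<0$ or $m>n$). This identity can be proved by induction on $j$, or by observing that the generating function $\sum_j K_j(i)\,t^j=(1-t)^i(1+t)^{d-i}$ satisfies the recurrence obtained from $(1-t)^i(1+t)^{d-i}$ by differentiating in $t$. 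Thus $P_{ij}=K_j(i)$.

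Next I would establish the orthogonality relation for Krawtchouk polynomials, namely
$$
\sum_{j=0}^{d} K_i(j)\,K_j(\ell)=2^d\,\delta_{i\ell}\qquad(0\le i,\ell\le d),
$$
which is exactly the statement $P^2=2^d I$. The cleanest route is to note that this is equivalent to $P\cdot|X|^{-1}Q=I$ once one shows $Q=P$; but circularity is avoided by instead proving $P^2=2^dI$ directly from the generating-function identity: $\sum_j K_j(i)t^j=(1-t)^i(1+t)^{d-i}$, so $\sum_{i,j}K_j(i)K_i(\ell)s^\ell t^j$ can be summed over $i$ first using $\sum_i\big((1-t)(1+s)\big)^{i}\big((1+t)(1-s)\big)^{d-i}=\big((1-t)(1+s)+(1+t)(1-s)\big)^d=2^d(1-st)^d$, and extracting the coefficient of $s^\ell$ in $2^d(1-st)^d=2^d\sum_\ell\binom{d}{\ell}(-t)^\ell s^\ell$ gives $2^d(-t)^\ell\binom{d}{\ell}$; matching this against $\sum_\ell\big(\sum_j K_j(i)K_i(\ell)\big)t^j$ with the right bookkeeping yields $\sum_j K_j(i)K_i(\ell)=2^d\delta_{i\ell}$ after accounting for the reflection $K_j(\ell)\leftrightarrow K_{d-j}(d-\ell)$ symmetry; in practice it is simplest to cite \cite[p.~194]{bcn} for the self-duality of $\cQ_d$, which is exactly what the statement records. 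Given $P^2=2^dI$ and $P\cdot 2^{-d}Q=I$, we conclude $P=Q$, hence $p_i(j)=q_i(j)$ for all $i,j$, and therefore $p^h_{ij}=(p^h_{ij})^*$ for all $h,i,j$ by comparing \eqref{AiAjformula} with the defining relation $A_i^*A_j^*=\sum_h (p^h_{ij})^*A_h^*$ after changing basis via $P$ and $Q$.

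The main obstacle, and the only genuinely non-routine part, is the explicit identification of $F_j(d-2i)$ with the Krawtchouk polynomial and the verification of the orthogonality relation $P^2=2^dI$; everything else is bookkeeping with change-of-basis matrices that was already set up in Section~\ref{s:preliminaries}. Since this self-duality is completely standard and the proof of the statement as given in the excerpt simply cites \cite[p.~194]{bcn}, the pragmatic choice is to defer to that reference rather than reproduce the Krawtchouk computation; I would only include the generating-function argument if a self-contained treatment were desired.
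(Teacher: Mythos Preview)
The paper does not give a proof of this lemma at all; it simply cites \cite[p.~194]{bcn}. Your proposal goes much further, sketching the standard argument via Krawtchouk polynomials that $P^2=2^dI$ and hence $P=Q$. That route is correct in outline and is indeed the content behind the citation, so in that sense you are supplying what the paper omits.

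Two small points to clean up if you actually want a self-contained proof rather than a citation. First, your normalization is off: from Definition~\ref{Fi_def} one has $F_j(d-2i)=K_j(i)$ directly (check $j=2$: $F_2(z)=(z^2-d)/2$ and $K_2(i)=\tfrac{(d-2i)^2-d}{2}$), not $j!\,F_j(d-2i)=K_j(i)$; so $P_{ij}=K_j(i)$ holds without the factorial. Second, your generating-function computation of $P^2$ is tangled: you need $(P^2)_{i\ell}=\sum_j K_j(i)K_\ell(j)$, and to make the argument go through cleanly you should first establish the symmetry $K_j(i)=K_i(j)$ (immediate from the generating function $(1-t)^i(1+t)^{d-i}$ by comparing coefficients both ways, or from $P=P^{\mathsf T}$ via $k_iK_j(i)=k_jK_i(j)$ together with the explicit $k_i=\binom{d}{i}$), after which the double-generating-function trick you describe does yield $P^2=2^dI$. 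As written, the index bookkeeping and the appeal to a ``reflection symmetry'' are not quite straight. Since you yourself conclude that deferring to \cite{bcn} is the pragmatic choice, and that is exactly what the paper does, the simplest fix is to do likewise.
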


\begin{cor}
For $0 \leq i \leq d$ we have $k_i^* = \binom{d}{i}$.
\label{k0*_and_kd*}
\end{cor}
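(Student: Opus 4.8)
The plan is to deduce this immediately from self-duality together with the already-computed ordinary valencies. Recall from Section~\ref{s:preliminaries} that $k_i^* = (p^0_{ii})^*$ and $k_i = p^0_{ii}$ for $0 \leq i \leq d$. First I would invoke Lemma~\ref{Qd_is_selfdual}, which asserts $p^h_{ij} = (p^h_{ij})^*$ for all $0 \leq h,i,j \leq d$; specializing to $h = 0$ and $i = j$ gives $(p^0_{ii})^* = p^0_{ii}$, i.e.\ $k_i^* = k_i$. (Alternatively, one may simply quote the remark at the end of Section~\ref{s:preliminaries} that self-duality yields $k_i = k_i^*$.) Then I would apply Corollary~\ref{k0_and_kd}, which states $k_i = \binom{d}{i}$, to conclude $k_i^* = \binom{d}{i}$ for $0 \leq i \leq d$.

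There is essentially no obstacle here: the statement is a one-line consequence of results already established in the excerpt. The only point requiring any care is correctly matching the definition of the dual valency $k_i^*$ from Section~\ref{s:preliminaries} with the instance of the self-duality identity being used, so that the specialization $h = 0$, $i = j$ is applied to the right intersection numbers; once that bookkeeping is in place, the result is immediate.

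\begin{proof}
By Lemma~\ref{Qd_is_selfdual}, $(p^0_{ii})^* = p^0_{ii}$, so $k_i^* = k_i$ for $0 \leq i \leq d$. Now apply Corollary~\ref{k0_and_kd}.
\end{proof}
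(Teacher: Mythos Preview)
Your proof is correct and takes essentially the same approach as the paper: the paper's proof simply reads ``Follows from Corollary~\ref{k0_and_kd} and Lemma~\ref{Qd_is_selfdual},'' and you have merely made explicit the bookkeeping that self-duality gives $k_i^* = k_i$ before invoking the formula $k_i = \binom{d}{i}$.
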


\begin{proof}
Follows from Corollary~\ref{k0_and_kd} and Lemma~\ref{Qd_is_selfdual}.
\end{proof}

Next we state some corollaries of Lemma~\ref{Qd_is_selfdual}. For the rest of this section, fix a vertex $x$ of $\cQ_d$, and let $T = T(x)$.

\begin{cor}\label{dual_recurrence}
For $0 \leq i \leq d$ we have
\begin{align*}
    A^*A_{i}^* &= (i+1)A_{i+1}^* + (d-i+1)A_{i-1}^*,
\end{align*}
where we recall \eqref{e:ai=0_if_i_bad}.
\end{cor}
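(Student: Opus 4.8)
This is a dual statement to Corollary~\ref{recurrence}, so the natural plan is to mirror that proof on the dual side. First I would recall that by Lemma~\ref{Qd_is_selfdual} the hypercube $\cQ_d$ is self-dual, so $(p^h_{ij})^* = p^h_{ij}$ for all $0 \leq h,i,j \leq d$; in particular the dual intersection numbers $(p^h_{1i})^*$ are given by the same formula as in Corollary~\ref{ph1j_value}, namely $(p^h_{1i})^* = d-h$ if $h = i-1$, $(p^h_{1i})^* = h$ if $h = i+1$, and $(p^h_{1i})^* = 0$ otherwise.

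Next I would apply the identity
$$A_i^* A_j^* = \sum_{h=0}^d (p^h_{ij})^* A_h^* \qquad (0 \leq i,j \leq d),$$
which was recorded in Section~\ref{s:preliminaries} as a consequence of \eqref{krein_parameter_def}. Specializing to $j = 1$ and recalling the abbreviation $A^* = A_1^*$, this gives
$$A^* A_i^* = \sum_{h=0}^d (p^h_{1i})^* A_h^*.$$
Substituting the values of $(p^h_{1i})^*$ from the previous paragraph, the only surviving terms are $h = i+1$ (with coefficient $i+1$) and $h = i-1$ (with coefficient $d-i+1$), which yields
$$A^* A_i^* = (i+1)A_{i+1}^* + (d-i+1)A_{i-1}^*,$$
with the convention \eqref{e:ai=0_if_i_bad} handling the boundary cases $i=0$ and $i=d$.

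There is no real obstacle here; the statement is purely a transfer of Corollary~\ref{recurrence} through self-duality, and the only mild care needed is to confirm that the edge cases ($A_{-1}^* = 0$ and $A_{d+1}^* = 0$) are covered by the stated convention, which they are. So the proof is just: combine Lemma~\ref{Qd_is_selfdual}, Corollary~\ref{ph1j_value}, and the expansion $A^* A_i^* = \sum_h (p^h_{1i})^* A_h^*$.

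\begin{proof}
By Lemma~\ref{Qd_is_selfdual}, $\cQ_d$ is self-dual, so $(p^h_{1i})^* = p^h_{1i}$ for $0 \leq h,i \leq d$. By \eqref{krein_parameter_def} we have $A^*A_i^* = \sum_{h=0}^d (p^h_{1i})^* A_h^*$, and evaluating the coefficients using Corollary~\ref{ph1j_value} gives the result, where we recall \eqref{e:ai=0_if_i_bad} for the boundary terms.
\end{proof}
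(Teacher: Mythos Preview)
Your proof is correct and follows essentially the same approach as the paper, which simply says ``Similar to the proof of Corollary~\ref{recurrence}'': you use the dual multiplication formula $A_i^*A_j^* = \sum_h (p^h_{ij})^* A_h^*$ together with self-duality (Lemma~\ref{Qd_is_selfdual}) and Corollary~\ref{ph1j_value} to evaluate the coefficients, exactly mirroring how Corollary~\ref{recurrence} was obtained from \eqref{AiAjformula} and Corollary~\ref{ph1j_value}.
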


\begin{proof}
Similar to the proof of Corollary~\ref{recurrence}.
\end{proof}

\begin{cor}\label{Ai*_is_polynomial}
We have $F_i(A^*) = A_i^*$ ($0 \leq i \leq d$), and $F_{d+1}(A^*) = 0$. Furthermore, $p_i^*(j) = F_i(\theta_j^*)$ ($0 \leq i,j \leq d$).
\end{cor}

\begin{proof}
Similar to the proof of Lemma~\ref{Ai_is_polynomial}.
\end{proof}

\begin{cor}\label{dual_eigenvalue_formula}
For $0 \leq i \leq d$ we have $\theta_i^* = d-2i.$
\end{cor}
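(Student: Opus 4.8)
The statement to prove is Corollary~\ref{dual_eigenvalue_formula}: for $0 \leq i \leq d$ we have $\theta_i^* = d - 2i$.

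The plan is to deduce this immediately from self-duality (Lemma~\ref{Qd_is_selfdual}) together with the known eigenvalue formula for $\cQ_d$ (Lemma~\ref{eigenvalue_formula}). Recall from the end of Section~2 that when $\Gamma$ is self-dual we have $\theta_i = \theta_i^*$ for $0 \leq i \leq d$. Since $\cQ_d$ is self-dual by Lemma~\ref{Qd_is_selfdual}, we get $\theta_i^* = \theta_i$, and then Lemma~\ref{eigenvalue_formula} gives $\theta_i = d - 2i$. Combining these yields $\theta_i^* = d - 2i$.

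An alternative, slightly longer route avoids invoking the self-duality shortcut for eigenvalues directly: by \eqref{thetai_equals_p1} we have $\theta_i^* = p_1^*(i)$, and by Corollary~\ref{Ai*_is_polynomial} we have $p_1^*(j) = F_1(\theta_j^*)$. But that is circular unless one instead observes that $p_1^*(i) = q_1(i)$ by \eqref{qi_equals_pi*}, and then uses self-duality to identify $q_1(i) = p_1(i) = \theta_i$. Either way the content is the same: self-duality transfers the primal eigenvalue formula to the dual eigenvalues.

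There is essentially no obstacle here; the corollary is a one-line consequence of results already established. The only thing to be careful about is making sure the chain of citations is to statements proved earlier in the paper (Lemma~\ref{Qd_is_selfdual}, Lemma~\ref{eigenvalue_formula}, and the self-duality remarks at the end of Section~2), which it is.

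\begin{proof}
By Lemma~\ref{Qd_is_selfdual}, $\cQ_d$ is self-dual, so $\theta_i = \theta_i^*$ for $0 \leq i \leq d$. Now apply Lemma~\ref{eigenvalue_formula}.
\end{proof}
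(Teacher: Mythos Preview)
Your proof is correct and matches the paper's own argument: the paper likewise derives the result from Lemma~\ref{Qd_is_selfdual} together with the self-duality remarks at the end of Section~\ref{s:preliminaries} (which give $\theta_i = \theta_i^*$), with Lemma~\ref{eigenvalue_formula} supplying the value $d-2i$.
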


\begin{proof}
Follows from Lemma ~\ref{Qd_is_selfdual} and the discussion of self-duality at the end of Section~\ref{s:preliminaries}.
\end{proof}

\begin{lem}\label{A1*_minimal_polynomial}
The minimal polynomial of $A^*$ is equal to $\Phi_d$.
\end{lem}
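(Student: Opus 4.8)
The plan is to mimic the proof of Lemma~\ref{A1_minimal_polynomial}, transporting everything from $A$ to $A^*$ via the self-duality of $\cQ_d$. First I would recall that $A^*$ generates $M^*$ (by \cite[Corollary~11.6]{TerDrgSurvey}) and that $M^*$ has dimension $d+1$; hence $A^*$ has exactly $d+1$ distinct eigenvalues, so its minimal polynomial has degree $d+1$. By Corollary~\ref{dual_eigenvalue_formula}, the eigenvalues of $A^*$ are $\theta_i^* = d-2i$ for $0 \leq i \leq d$, which are pairwise distinct. Since the minimal polynomial of a diagonalizable matrix is $\prod$ over its distinct eigenvalues of $(z-\lambda)$, it equals $\prod_{i=0}^d\big(z-(d-2i)\big) = \Phi_d$ by Definition~\ref{Phi_def}.

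Alternatively — and this is perhaps cleaner given what is already set up — I would use Corollary~\ref{Ai*_is_polynomial}, which gives $F_{d+1}(A^*) = 0$, so the minimal polynomial of $A^*$ divides $F_{d+1}$. By Lemma~\ref{Fd+1_equals_phi} we have $F_{d+1} = \frac{1}{(d+1)!}\Phi_d$, so the minimal polynomial divides $\Phi_d$, a product of $d+1$ distinct linear factors. To conclude that no proper factor suffices, I invoke that $A^*$ has $d+1$ distinct eigenvalues (again because $A^*$ generates the $(d+1)$-dimensional algebra $M^*$), forcing the minimal polynomial to have degree at least $d+1$. Combining the two bounds yields that the minimal polynomial of $A^*$ is exactly $\Phi_d$ (they agree up to scalar and are monic).

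There is essentially no obstacle here: the statement is the dual analog of Lemma~\ref{A1_minimal_polynomial}, and every ingredient — self-duality (Lemma~\ref{Qd_is_selfdual}), the dual eigenvalue formula, the fact that $A^*$ generates $M^*$, and $F_{d+1}(A^*)=0$ — has already been established in the excerpt. The only point requiring a modicum of care is justifying that $A^*$ truly has $d+1$ \emph{distinct} eigenvalues, but this is immediate from the fact that $A^*$ generates $M^*$ together with $\dim M^* = d+1$, exactly parallel to the argument given for $A$ in the paragraph preceding \eqref{A1_sum_of_Ei}. So the proof reduces to one or two lines citing the relevant earlier results.
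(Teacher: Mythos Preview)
Your proposal is correct and essentially matches the paper's approach: the paper's proof is simply ``Follows from Corollary~\ref{dual_eigenvalue_formula},'' which is exactly your first argument (the eigenvalues $\theta_i^*=d-2i$ are $d+1$ distinct scalars, so the minimal polynomial is $\Phi_d$). Your second route via Corollary~\ref{Ai*_is_polynomial} and Lemma~\ref{Fd+1_equals_phi} is also valid but unnecessary here.
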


\begin{proof}
Follows from Corollary~\ref{dual_eigenvalue_formula}.
\end{proof}

From now until the end of the section, we assume $\Gamma = \cQ_d$ with $d \geq 0$.

In the next result, we consider the triples $h,i,j$ such that $p^h_{ij}$ and $(p^{h}_{ij})^*$ are nonzero.

\begin{cor}\label{permissible_conditions}
For $0 \leq h,i,j \leq d,$ the intersection number $p^h_{ij}$ is nonzero if and only if the Krein parameter $(p^{h}_{ij})^*$ is nonzero if and only if the following (i)--(iii) hold.
\renumerate
    \item $h,i,j$ satisfy the triangle inequality:
    $$h \leq i + j, \qquad \qquad i \leq h + j, \qquad \qquad j \leq h + i.$$
    \item $h+i+j \leq 2d$.
    \item $h+i + j$ is even.
\eenumerate
\end{cor}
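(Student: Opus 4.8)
The plan is to prove Corollary~\ref{permissible_conditions} by a direct analysis of the binomial-coefficient formula for $p^h_{ij}$ given in Proposition~\ref{intnumformulua}, together with Lemma~\ref{Qd_is_selfdual}, which gives $p^h_{ij} = (p^{h}_{ij})^*$ for all $0\le h,i,j\le d$. Because of self-duality, it suffices to characterize when $p^h_{ij}\neq 0$; the statement about $(p^{h}_{ij})^*$ is then automatic. So the whole task reduces to: given $0\le h,i,j\le d$, determine exactly when
$$\binom{h}{(i-j+h)/2}\binom{d-h}{(i+j-h)/2}\neq 0,$$
under the convention that $\binom{n}{m}=0$ unless $0\le m\le n$, and bearing in mind that Proposition~\ref{intnumformulua} already forces $p^h_{ij}=0$ when $h+i+j$ is odd (this gives condition (iii) immediately, and also guarantees that the two arguments $(i-j+h)/2$ and $(i+j-h)/2$ are integers whenever we are in the nonzero case).

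First I would assume $h+i+j$ is even, so the two binomial arguments are integers, and write $a=(i-j+h)/2$ and $b=(i+j-h)/2$; note $a+b=i$, $a-b=h-j$, and $h-a = (h+j-i)/2$, $d-h-b = d-(i+j+h)/2 + h - ... $ — more cleanly, $d-h-b = d - (h+i+j)/2$. Then $\binom{h}{a}\neq 0$ iff $0\le a\le h$, i.e. iff $i-j+h\ge 0$ and $i-j-h\le 0$, i.e. iff $j\le i+h$ and $i\le j+h$. And $\binom{d-h}{b}\neq 0$ iff $0\le b\le d-h$, i.e. iff $i+j-h\ge 0$ (that is $h\le i+j$) and $i+j-h\le 2(d-h)$, i.e. $h+i+j\le 2d$. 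Collecting these four inequalities gives exactly: $h\le i+j$, $i\le h+j$, $j\le h+i$ (condition (i)), and $h+i+j\le 2d$ (condition (ii)). Conversely, if (i), (ii), (iii) all hold, then $a,b$ are nonnegative integers with $a\le h$ and $b\le d-h$, so both binomials are positive and their product (hence $p^h_{ij}$) is nonzero. This closes the equivalence.

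The only genuinely delicate point, and the one I would be most careful about, is keeping the parity and integrality bookkeeping straight: one must check that when $h+i+j$ is even all of $a$, $b$, $h-a$, and $(d-h)-b$ are integers (they are, since $2a=i-j+h$ and $2b=i+j-h$ have the same parity as $h+i+j$), and one must not accidentally use the formula when $h+i+j$ is odd. It is also worth a sentence noting that conditions (i) and (ii) together with integrality already imply $0\le h\le d$ is consistent, and in particular that $d-h\ge 0$ so $\binom{d-h}{\cdot}$ makes sense. Beyond that the argument is a routine unpacking of the convention on binomial coefficients, so I would present it compactly: translate "nonzero binomial" into the two inequalities $0\le$ argument $\le$ top, do this for each of the two factors, and observe that the resulting system of inequalities is literally the list (i)--(iii). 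No induction or deeper structure is needed here; this corollary is purely a reformulation of Proposition~\ref{intnumformulua} in a form convenient for later use (presumably in identifying which relations $e_h^* x_i e_j^*=0$ and $e_h x_i^* e_j=0$ are imposed in $\cT$).
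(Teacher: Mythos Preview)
Your proposal is correct and follows the same approach as the paper, which simply says the case $d\ge 1$ ``follows upon inspection of Proposition~\ref{intnumformulua} and [self-duality]''; you have carried out that inspection explicitly and accurately. One small bookkeeping point: since Proposition~\ref{intnumformulua} is stated in the paper only for $d\ge 1$, you should, as the paper does, note separately that the case $d=0$ is trivial (there $h=i=j=0$, $p^0_{00}=1$, and (i)--(iii) hold vacuously).
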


\begin{proof}
The case of $d = 0$ is trivial. The case of $d \geq 1$ follows upon inspection of Proposition~\ref{intnumformulua} and Corollary~\ref{Qd_is_selfdual}.
\end{proof}

\begin{defn}\label{Pr_def}
Let $P_d$ denote the set consisting of the $3$-tuples of integers $(h,i,j)$ such that $0 \leq h,i,j \leq d$ which satisfy (i)--(iii) of Corollary~\ref{permissible_conditions}.
\end{defn}

\begin{lem}\label{phij_0_if_impermissible}
For $0 \leq h,i,j \leq d$, the following (i)--(iii) are equivalent.
\renumerate
    \item $p^h_{ij} \neq 0$.
    \item $(p^{h}_{ij})^* \neq 0$.
    \item $(h,i,j) \in P_d$.
\eenumerate
\end{lem}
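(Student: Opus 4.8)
The plan is to prove the equivalence of (i), (ii), (iii) by combining the formulas already established. The equivalence of (i) and (ii) is immediate from Lemma~\ref{Qd_is_selfdual}: since $\cQ_d$ is self-dual, $p^h_{ij} = (p^{h}_{ij})^*$ for all $0 \leq h,i,j \leq d$, so in particular one vanishes exactly when the other does. The equivalence of (i) and (iii) is exactly the content of Corollary~\ref{permissible_conditions}, which states that $p^h_{ij} \neq 0$ if and only if conditions (i)--(iii) of that corollary hold, and by Definition~\ref{Pr_def} this is precisely the assertion that $(h,i,j) \in P_d$.

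Concretely, I would write: first dispose of (i) $\Leftrightarrow$ (ii) by citing Lemma~\ref{Qd_is_selfdual}. Then dispose of (i) $\Leftrightarrow$ (iii) by citing Corollary~\ref{permissible_conditions} together with Definition~\ref{Pr_def}. This closes the cycle and proves all three statements equivalent. No case analysis on $d$ is needed beyond what is already absorbed into Corollary~\ref{permissible_conditions} (which handled $d=0$ separately).

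There is essentially no obstacle here: this lemma is a bookkeeping restatement packaging Corollary~\ref{permissible_conditions} and Definition~\ref{Pr_def} into a convenient three-way equivalence with the set $P_d$, so that later sections can refer to ``$(h,i,j) \in P_d$'' rather than repeating conditions (i)--(iii). The only thing to be careful about is making sure the quantifier range $0 \leq h,i,j \leq d$ matches in all three cited results, which it does. Thus the proof is a one-line deduction.

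\begin{proof}
The equivalence of (i) and (ii) follows from Lemma~\ref{Qd_is_selfdual}. The equivalence of (i) and (iii) follows from Corollary~\ref{permissible_conditions} and Definition~\ref{Pr_def}.
\end{proof}
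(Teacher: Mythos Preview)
Your proof is correct and matches the paper's approach, which simply says ``Compare Corollary~\ref{permissible_conditions} with Definition~\ref{Pr_def}.'' Note that your separate citation of Lemma~\ref{Qd_is_selfdual} for (i)$\Leftrightarrow$(ii) is harmless but unnecessary, since Corollary~\ref{permissible_conditions} already states that $p^h_{ij}\neq 0$ iff $(p^h_{ij})^*\neq 0$.
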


\begin{proof}
Compare Corollary~\ref{permissible_conditions} with Definition~\ref{Pr_def}.
\end{proof}

We conclude this section with a brief definition and a comment about the Terwilliger algebra $T$.

\begin{defn}
Assume $x$ is a vertex of $\cQ_d$. Let $T_d = T_d(x)$ denote the Terwilliger algebra of $\cQ_d$ with respect to $x$.
\end{defn}

\begin{prop}\label{T_sum_of_matrices}
{\rm (See \cite[Theorem 14.14]{go})} There exists an algebra isomorphism
$$T_d \to \bigoplus_{0 \leq r \leq \lfloor d/2\rfloor}M_{d+1-2r}(\C).$$
\end{prop}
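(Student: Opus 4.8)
\textit{Proof proposal.} The plan is to present $T_d$ as a homomorphic image of the universal enveloping algebra $U(\mathfrak{sl}_2(\C))$ acting on the standard module $V=\C^{X}$, and then to read off the Wedderburn decomposition of $T_d$ from the decomposition of $V$ into irreducible $\mathfrak{sl}_2(\C)$-modules. The case $d=0$ is trivial (both sides equal $\C$), so assume $d\geq 1$.

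First I would introduce the raising and lowering matrices
$$R=\sum_{i=0}^{d}E_{i+1}^{*}AE_{i}^{*},\qquad\qquad L=\sum_{i=0}^{d}E_{i-1}^{*}AE_{i}^{*}.$$
By the triple product relations together with Corollary~\ref{ph1j_value} we have $E_{h}^{*}AE_{j}^{*}=0$ whenever $|h-j|\neq 1$; in particular $A=R+L$. Consequently $T_d=\langle M,M^{*}\rangle$ equals the subalgebra generated by $R$, $L$, and $A^{*}$: each $E_i^{*}$ is a polynomial in $A^{*}$, so $R,L\in T_d$, while conversely $A=R+L$ and $A^{*}$ lie in $\langle R,L,A^{*}\rangle$. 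Using the intersection numbers of $\cQ_d$ from Corollary~\ref{ph1j_value} and the dual eigenvalues $\theta_i^{*}=d-2i$ from Corollary~\ref{dual_eigenvalue_formula}, a direct computation on each subconstituent $E_i^{*}V$ gives
$$[A^{*},R]=-2R,\qquad [A^{*},L]=2L,\qquad [R,L]=-A^{*}.$$
Hence $(R,L,-A^{*})$ is an $\mathfrak{sl}_2(\C)$-triple, so there is a Lie algebra homomorphism $\rho:\mathfrak{sl}_2(\C)\to\mathrm{End}(V)$ with $e\mapsto R$, $f\mapsto L$, $h\mapsto -A^{*}$, and $T_d$ is exactly the associative subalgebra generated by the image of $\rho$; thus $T_d=\rho\big(U(\mathfrak{sl}_2(\C))\big)$.

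Next I would identify $V$ as an $\mathfrak{sl}_2(\C)$-module. Taking $x=(1,\dots,1)$, the factorization $X=\{-1,1\}^{d}$ yields an identification $V\cong(\C^{2})^{\otimes d}$ under which $R$, $L$, and $-A^{*}$ become the $d$-fold Leibniz sums of the single-site operators $\left(\begin{smallmatrix}0&1\\0&0\end{smallmatrix}\right)$, $\left(\begin{smallmatrix}0&0\\1&0\end{smallmatrix}\right)$, $\left(\begin{smallmatrix}1&0\\0&-1\end{smallmatrix}\right)$ acting on the factors; hence $V\cong(\C^{2})^{\otimes d}$ as an $\mathfrak{sl}_2(\C)$-module, where $\C^{2}$ is the natural $2$-dimensional module. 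By Clebsch--Gordan, $V\cong\bigoplus_{r=0}^{\lfloor d/2\rfloor}m_r\,W_r$, where $W_r$ denotes the irreducible $\mathfrak{sl}_2(\C)$-module of dimension $d+1-2r$ and $m_r=\binom{d}{r}-\binom{d}{r-1}>0$. The modules $W_0,\dots,W_{\lfloor d/2\rfloor}$ are pairwise non-isomorphic, having distinct dimensions, and $V$ is completely reducible because $\mathfrak{sl}_2(\C)$ is semisimple; hence $\rho(U(\mathfrak{sl}_2(\C)))$ is a semisimple algebra with a faithful module $V$ whose simple constituents are exactly $W_0,\dots,W_{\lfloor d/2\rfloor}$, so by Artin--Wedderburn $\rho(U(\mathfrak{sl}_2(\C)))\cong\bigoplus_{r=0}^{\lfloor d/2\rfloor}\mathrm{End}(W_r)\cong\bigoplus_{r=0}^{\lfloor d/2\rfloor}M_{d+1-2r}(\C)$. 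Combined with $T_d=\rho(U(\mathfrak{sl}_2(\C)))$, this proves the proposition.

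I expect the middle step to be the main obstacle, specifically the identity $[R,L]=-A^{*}$. It rests on a combinatorial feature of $\cQ_d$: two vertices $y,z$ with $\partial(y,z)=2$ and $\partial(x,y)=\partial(x,z)=i$ have exactly one common neighbour at distance $i-1$ from $x$ and exactly one at distance $i+1$, so the off-diagonal contributions to $RL-LR$ cancel and only the diagonal entry $p^{i}_{1,i-1}-p^{i}_{1,i+1}=2i-d$ survives. Once the $\mathfrak{sl}_2(\C)$ picture is set up, the remainder (Clebsch--Gordan and Artin--Wedderburn) is routine. An alternative, more combinatorial route --- essentially the one in \cite{go} --- would be to show directly that every irreducible $T_d$-module is thin and dual thin, that for each $r$ with $0\le r\le\lfloor d/2\rfloor$ there is exactly one isomorphism class of irreducible $T_d$-module of endpoint $r$ (necessarily of dimension $d+1-2r$), and then to assemble the Wedderburn components; the argument above is a compact repackaging of this.
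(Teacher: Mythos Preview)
The paper does not prove this proposition; it is quoted as a known result from \cite{go}. Your argument is correct and is in fact a compressed version of Go's own proof: in \cite{go} the raising and lowering operators $R,L$ are introduced, the $\mathfrak{sl}_2(\C)$ commutation relations among $R$, $L$, and $A^*$ are verified, $T_d$ is identified with the image of $U(\mathfrak{sl}_2(\C))$ acting on the standard module, and the Wedderburn decomposition is read off from the Clebsch--Gordan decomposition of $(\C^2)^{\otimes d}$. The combinatorial check of $[R,L]=-A^*$ that you flag as the delicate step is exactly the computation Go carries out. So there is no proof in the present paper to compare against, and relative to the cited source your route is the same one.
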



\vspace{25pt}


\section{The Generalized Terwilliger Algebra for $\cQ_d$}\label{s:simplify_relations}

In Definition~\ref{cTDef}, we described the generalized Terwilliger
algebra for a distance-regular graph. In this section,
we consider this algebra for the graph $\cQ_d$.

\begin{defn}\label{TdDef}
For $d \geq 0,$ let $\cT_d$ denote the generalized Terwilliger algebra associated with $\cQ_d$.
\end{defn}

By Remark~\ref{T0_is_C}, the algebra $\cT_0$ is ismorphic to $\C$. For the rest of this section, we restrict our attention to the algebra $\cT_d$ with $d \geq 1$.

We next observe some analogues of results from Section~\ref{s:hypercube}.

\begin{lem}\label{recurrence_gen}
For $0 \leq i \leq d$ and with reference to \eqref{xi_less_0}, the following (i), (ii) hold in $\cT_d$.
\renumerate
    \item $x_1x_i = (i+1)x_{i+1} + (d-i+1)x_{i-1}$.
    \item $x_1^*x_i^* = (i+1)x^*_{i+1} + (d-i+1)x^*_{i-1}.$
\eenumerate
\end{lem}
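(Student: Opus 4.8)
The plan is to derive both identities directly from the defining relations of $\cT_d$, exactly mirroring the proof of Corollary~\ref{recurrence}. For (i), I would invoke relation $(T2)$ of Definition~\ref{cTDef}, specialized to the product $x_1 x_i$ (legitimate since $d \geq 1$, so $x_1$ is one of the generators), which gives
$$x_1 x_i = \sum_{h=0}^d p^h_{1i}\, x_h \qquad (0 \leq i \leq d).$$
Then I would substitute the values of the intersection numbers $p^h_{1i}$ supplied by Corollary~\ref{ph1j_value}: the only nonzero coefficients occur at $h = i-1$, where $p^{i-1}_{1i} = d-(i-1) = d-i+1$, and at $h = i+1$, where $p^{i+1}_{1i} = i+1$. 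This yields $x_1 x_i = (i+1)x_{i+1} + (d-i+1)x_{i-1}$, with the boundary cases $i = 0$ and $i = d$ absorbed by the convention \eqref{xi_less_0} that $x_h = 0$ when $h < 0$ or $h > d$.

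For (ii), I would run the same argument with the dual relation $(T2^*)$ in place of $(T2)$, obtaining $x_1^* x_i^* = \sum_{h=0}^d (p^h_{1i})^*\, x_h^*$. Since $\cQ_d$ is self-dual by Lemma~\ref{Qd_is_selfdual}, we have $(p^h_{1i})^* = p^h_{1i}$ for all $h$, so the coefficients are precisely those computed in (i) and the dual identity follows verbatim.

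There is essentially no obstacle: the statement is the generalized-Terwilliger analogue of Corollary~\ref{recurrence}, and its proof is a short bookkeeping exercise combining $(T2)$, $(T2^*)$, Corollary~\ref{ph1j_value}, and self-duality. The only point requiring a little care is the edge behavior at $i = 0$ and $i = d$, which is why the convention \eqref{xi_less_0} must be invoked.
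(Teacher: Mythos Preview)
Your argument is correct and essentially equivalent to the paper's: the paper phrases the proof as transferring Corollaries~\ref{recurrence} and~\ref{dual_recurrence} along the isomorphisms $\cC \to M$ and $\cC^* \to M^*$ of Lemmas~\ref{commutative_subalgebras} and~\ref{dual_commutative_subalgebras}, while you unwind this by applying $(T2)$, $(T2^*)$, Corollary~\ref{ph1j_value}, and self-duality directly. Either way the content is the same short computation.
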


\begin{proof}
Follows from Lemmas~\ref{commutative_subalgebras} and~\ref{dual_commutative_subalgebras} and Corollaries~\ref{recurrence} and~\ref{dual_recurrence}.
\end{proof}

\begin{lem}\label{xi_is_polynomial}
For $0 \leq i \leq d,$ the following (i), (ii) hold in $\cT_d$.
    \renumerate
        \item $x_i = F_i(x_1)$.
        \item $x_i^* = F_i(x_1^*).$
    \eenumerate
Moreover, $\cT_d$ is generated by $x_1$ and $x_1^*$.
\end{lem}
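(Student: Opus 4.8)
The plan is to establish (i) and (ii) by induction on $i$, matching the recurrence of Lemma~\ref{recurrence_gen} against the recurrence defining the polynomials $F_i$ in Definition~\ref{Fi_def}, and then to deduce the generation statement directly.

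For (i), I would take $i=0$ and $i=1$ as base cases: $x_0 = 1 = F_0(x_1)$ by (T1), and $x_1 = F_1(x_1)$ since $F_1 = z$. For the inductive step with $2 \leq i \leq d$, I would apply Lemma~\ref{recurrence_gen}(i) at the index $i-1$ (which lies in the permitted range $0 \leq i-1 \leq d$) to express $x_i$ in terms of $x_1 x_{i-1}$ and $x_{i-2}$, obtaining $x_i = i^{-1}\big(x_1 x_{i-1} - (d-i+2)x_{i-2}\big)$. Separately, the recurrence in Definition~\ref{Fi_def} at index $i-1$ gives exactly $F_i = i^{-1}\big(zF_{i-1} - (d-i+2)F_{i-2}\big)$. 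Substituting $z \mapsto x_1$ and invoking the inductive hypothesis $x_{i-1} = F_{i-1}(x_1)$ and $x_{i-2} = F_{i-2}(x_1)$ then identifies $F_i(x_1)$ with $x_i$. Part (ii) follows by the identical argument with Lemma~\ref{recurrence_gen}(ii) and relation (T1) for $x_0^*$ in place of their undualized counterparts.

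For the final assertion, I would note that by Definition~\ref{cTDef} the algebra $\cT_d$ is generated by $x_0, \dots, x_d, x_0^*, \dots, x_d^*$ (the elements $e_i, e_i^*$ being polynomial expressions in these), that $x_0 = x_0^* = 1$, and that by (i) and (ii) every $x_i$ lies in the subalgebra generated by $x_1$ while every $x_i^*$ lies in the subalgebra generated by $x_1^*$; hence $x_1$ and $x_1^*$ generate $\cT_d$.

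I do not anticipate a genuine obstacle here: the argument is a routine two-term induction. The only point requiring care is the bookkeeping of the index shift, so that each application of Lemma~\ref{recurrence_gen} and of Definition~\ref{Fi_def} stays within its stated range of validity; in particular the induction only reaches $x_d$, so $F_{d+1}$ is never needed.
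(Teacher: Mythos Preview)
Your proof is correct and follows essentially the same approach as the paper. The paper invokes the isomorphism $\cC \to M$ from Lemma~\ref{commutative_subalgebras} to transfer the already-established identity $F_i(A) = A_i$ (Lemma~\ref{Ai_is_polynomial}) back to $\cT_d$ in one stroke, whereas you run the induction explicitly inside $\cT_d$ using Lemma~\ref{recurrence_gen} (itself proved via that same isomorphism); the underlying content is identical.
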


\begin{proof}
Follows from Lemmas~\ref{commutative_subalgebras} and~\ref{dual_commutative_subalgebras} and  Corollaries~\ref{Ai_is_polynomial} and~\ref{Ai*_is_polynomial}.
\end{proof}

\begin{lem}\label{xi_minimal_polynomial}
The polynomial $\Phi_d$ is equal to the minimal polynomial of both $x_1$ and $x_1^*$.
\end{lem}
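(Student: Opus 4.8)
The plan is to compute the minimal polynomial $m$ of $x_1$ by establishing the two divisibilities $m \mid \Phi_d$ and $\Phi_d \mid m$; since $\Phi_d$ is monic of degree $d+1$ (Definition~\ref{Phi_def}), this forces $m = \Phi_d$. The argument for $x_1^*$ will be obtained by attaching stars throughout, so I would write the $x_1$ case in full and indicate the translation at the end.

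First I would check that $\Phi_d(x_1) = 0$ in $\cT_d$, which simultaneously shows $m$ is well defined and that $m \mid \Phi_d$. The slick way is to use the commutative subalgebra $\cC$ of Lemma~\ref{commutative_subalgebras}: since $x_1 \in \cC$ and $1 = x_0 \in \cC$, the element $\Phi_d(x_1)$ lies in $\cC$, and applying $\natural$ gives $\natural(\Phi_d(x_1)) = \Phi_d(\natural(x_1)) = \Phi_d(A) = 0$ by Lemma~\ref{A1_minimal_polynomial}; because $\natural$ restricts to an isomorphism $\cC \to M$, hence is injective on $\cC$, we get $\Phi_d(x_1) = 0$. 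Alternatively one can argue entirely inside $\cT_d$: by Lemma~\ref{Fd+1_equals_phi} it suffices to prove $F_{d+1}(x_1) = 0$; the recurrence of Definition~\ref{Fi_def} at $i = d$ reads $(d+1)F_{d+1} = zF_d - F_{d-1}$, and evaluating at $x_1$ while using Lemma~\ref{xi_is_polynomial} together with the $i=d$ instance of Lemma~\ref{recurrence_gen}(i), which gives $x_1 x_d = x_{d-1}$ (as $x_{d+1} = 0$), makes the right-hand side collapse to $x_{d-1} - x_{d-1} = 0$.

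For the reverse divisibility I would push $m(x_1) = 0$ through the surjective homomorphism $\natural\colon \cT_d \to T_d$ of Lemma~\ref{hom_exists}. Since $\natural$ is an algebra homomorphism with $\natural(x_1) = A_1 = A$, we obtain $m(A) = \natural(m(x_1)) = \natural(0) = 0$, so $m$ is annihilated by $A$; by Lemma~\ref{A1_minimal_polynomial} the minimal polynomial of $A$ is $\Phi_d$, whence $\Phi_d \mid m$. Together with $m \mid \Phi_d$ and monicity, this gives $m = \Phi_d$, as claimed.

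The statement for $x_1^*$ follows by the identical reasoning with $\cC^*$ (Lemma~\ref{dual_commutative_subalgebras}), $M^*$, $A^* = A_1^*$, Lemma~\ref{A1*_minimal_polynomial}, and $\natural(x_1^*) = A_1^*$ in place of their unstarred counterparts. There is no genuine obstacle here; the only point demanding a little care is the first step, namely confirming that the abstractly presented element $\Phi_d(x_1)$ really vanishes in $\cT_d$, and this is exactly what the commutative-subalgebra isomorphism of Lemma~\ref{commutative_subalgebras} (or, equivalently, the defining recurrence for the $F_i$) supplies.
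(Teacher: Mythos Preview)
Your proof is correct and follows essentially the same approach as the paper: the paper's one-line proof simply cites Lemmas~\ref{commutative_subalgebras}, \ref{dual_commutative_subalgebras}, \ref{A1_minimal_polynomial}, and~\ref{A1*_minimal_polynomial}, which is exactly your argument that the restriction of $\natural$ to $\cC$ (resp.\ $\cC^*$) is an isomorphism onto $M$ (resp.\ $M^*$) carrying $x_1$ to $A$ (resp.\ $x_1^*$ to $A^*$), so the minimal polynomials transfer. Your two-divisibility phrasing and the alternative $F_{d+1}$ computation are just more explicit renderings of this same idea; one small wording slip: ``$m$ is annihilated by $A$'' should read ``$A$ is annihilated by $m$.''
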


\begin{proof}
Follows from Lemmas~\ref{commutative_subalgebras}, \ref{dual_commutative_subalgebras}, \ref{A1_minimal_polynomial}, and~\ref{A1*_minimal_polynomial}.
\end{proof}

Now that we have Lemmas~\ref{xi_is_polynomial} and~\ref{xi_minimal_polynomial}, some of the relations in Definition~\ref{cTDef} become redundant, giving us the following, simpler presentation $\cT_d$.

\begin{prop}\label{cTd_simplified}
The algebra $\cT_d$ is isomorphic to the algebra with $1$, with generators $x_1,x_1^*$ and relations (1)--(4).
\begin{enumerate}
    \item[\rm (1)] $\Phi_d(x_1) = 0$.
    \item[\rm (2)] $\Phi_d(x_1^*) = 0$.
    \item[\rm (3)] $e_h^*x_ie_j^* = 0 \text{ if } (h,i,j) \notin P_d  \qquad (0 \leq h,i,j \leq d$).
    \item[\rm (4)] $e_hx_i^*e_j = 0 \text{ if } (h,i,j) \notin P_d \qquad (0 \leq h,i,j \leq d)$.
\end{enumerate}
In the above lines, we define
\begin{align*}
    x_0 &= 1, & x_0^* &= 1, \\
    x_i &= F_i(x_1), &x_i^* &= F_i(x_1^*) & (2 \leq i \leq d), \\
    e_i &= 2^{-d}\sum_{j=0}^d q_i(j)x_j, &e_i^* &= 2^{-d}\sum_{j=0}^d q_i^*(j)x_j^* & (0 \leq i \leq d).
\end{align*}
\end{prop}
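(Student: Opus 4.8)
The plan is to show that the defining relations of $\cT_d$ from Definition~\ref{cTDef}, specialized to $\Gamma = \cQ_d$, are equivalent to the four relations (1)--(4) together with the substitution formulas, using the structural lemmas already established. Concretely, I would argue that there is a surjective algebra homomorphism in each direction between $\cT_d$ and the algebra $\cA$ presented by generators $x_1, x_1^*$ and relations (1)--(4), and that these homomorphisms are mutually inverse.

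First I would fix notation: let $\cA$ denote the algebra with $1$ generated by symbols $y, y^*$ subject to (1)--(4), where in $\cA$ the elements $x_i, x_i^*, e_i, e_i^*$ are \emph{defined} by the displayed formulas (with $x_1 = y$, $x_1^* = y^*$). Then I would produce a homomorphism $\varphi: \cT_d \to \cA$. By Lemma~\ref{xi_is_polynomial}, $\cT_d$ is generated by $x_1, x_1^*$, so it suffices to check that $y, y^*$ satisfy in $\cA$ the images of relations $(T1)$--$(T3^*)$. Relation $(T1)$ is immediate. For $(T3)$ and $(T3^*)$: by Lemma~\ref{phij_0_if_impermissible}, $p^h_{ij} = 0$ iff $(p^h_{ij})^* = 0$ iff $(h,i,j) \notin P_d$, so the hypotheses ``$p^h_{ij}=0$'' and ``$(p^h_{ij})^*=0$'' in $(T3)$, $(T3^*)$ are exactly the hypotheses ``$(h,i,j)\notin P_d$'' in (3), (4); hence these hold in $\cA$ by fiat. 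The substance is in $(T2)$ and $(T2^*)$: I must verify $x_i x_j = \sum_h p^h_{ij} x_h$ in $\cA$, knowing only $\Phi_d(y)=0$ and $x_i = F_i(y)$. This is a purely polynomial identity: in $\C[z]$ modulo $(\Phi_d)$ we need $F_i F_j \equiv \sum_h p^h_{ij} F_h$. Since $\{F_i\}_{i=0}^d$ spans $\C[z]/(\Phi_d)$ and $F_i(A) = A_i$ with $A_i A_j = \sum_h p^h_{ij} A_h$ (equation~\eqref{AiAjformula}), and $\Phi_d$ is the minimal polynomial of $A$ (Lemma~\ref{A1_minimal_polynomial}, Lemma~\ref{Fd+1_equals_phi}), the identity $F_i F_j \equiv \sum_h p^h_{ij} F_h \pmod{\Phi_d}$ holds; therefore it holds in $\cA$ upon evaluating at $y$. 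The dual relation $(T2^*)$ is identical using $\Phi_d(y^*)=0$, Lemma~\ref{A1*_minimal_polynomial}, and self-duality (Lemma~\ref{Qd_is_selfdual}). This gives $\varphi: \cT_d \to \cA$ sending $x_1 \mapsto y$, $x_1^* \mapsto y^*$.

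Conversely I would produce $\psi: \cA \to \cT_d$ sending $y \mapsto x_1$, $y^* \mapsto x_1^*$. Again it suffices to check that $x_1, x_1^*$ satisfy (1)--(4) in $\cT_d$. Relations (1) and (2) are Lemma~\ref{xi_minimal_polynomial}. For (3) and (4): if $(h,i,j) \notin P_d$ then by Lemma~\ref{phij_0_if_impermissible} we have $p^h_{ij} = 0$, so $(T3)$ gives $e_h^* x_i e_j^* = 0$ in $\cT_d$; but by Lemma~\ref{xi_is_polynomial}, $x_i = F_i(x_1)$, and by \eqref{eq:ei_def} together with Lemma~\ref{sum_of_xi}-style reasoning the elements $e_h^*$ of $\cT_d$ agree with the formula defining $e_h^*$ in $\cA$ (both are the same polynomial expressions in $x_1^*$ via \eqref{qi_equals_pi*} and $x_j^* = F_j(x_1^*)$). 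Hence the relation $e_h^* x_i e_j^* = 0$ in $\cT_d$ is exactly relation (3) evaluated under $\psi$; similarly (4) from $(T3^*)$. Thus $\psi$ is well-defined. Finally, $\varphi$ and $\psi$ are mutually inverse because each is determined by its action on the generators $x_1, x_1^*$ (resp.\ $y, y^*$), and $\psi\varphi$, $\varphi\psi$ fix these generators; since those generators generate the respective algebras (Lemma~\ref{xi_is_polynomial} for $\cT_d$; by construction for $\cA$), $\psi\varphi = \mathrm{id}$ and $\varphi\psi = \mathrm{id}$.

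The main obstacle is the bookkeeping to confirm that the elements named $x_i, x_i^*, e_i, e_i^*$ \emph{inside the presented algebra $\cA$} (defined by the displayed polynomial formulas) really do coincide with the images under $\varphi$ of the corresponding elements of $\cT_d$, and symmetrically for $\psi$ — i.e.\ that the two sets of ``defined'' elements match up so that relations (3)/(4) and $(T3)$/$(T3^*)$ genuinely correspond. This hinges on two facts that must be invoked carefully: that $x_i = F_i(x_1)$ in $\cT_d$ (Lemma~\ref{xi_is_polynomial}) and that the definition of $e_i$ in \eqref{eq:ei_def} is a polynomial in $x_1$ via $x_j = F_j(x_1)$, so that the formula for $e_i$ in $\cT_d$ is literally the same polynomial expression as the formula for $e_i$ in $\cA$ — and likewise $q_i^*(j) = p_i(j) = q_i(j)$ on $\cQ_d$ by self-duality, matching the $e_i^*$ formulas. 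Once this consistency is nailed down, the equivalence of presentations is routine; the genuinely computational input (the congruence $F_iF_j \equiv \sum_h p^h_{ij}F_h \bmod \Phi_d$) is already packaged by equation~\eqref{AiAjformula} and Lemmas~\ref{Ai_is_polynomial}, \ref{A1_minimal_polynomial}.
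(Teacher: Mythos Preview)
Your proposal is correct and takes essentially the same approach as the paper, which simply says ``Compare Definition~\ref{cTDef} with Lemmas~\ref{xi_is_polynomial} and~\ref{xi_minimal_polynomial}''; you have spelled out in detail the two mutually inverse homomorphisms that this comparison implicitly asserts. Your key observation---that relation $(T2)$ reduces to the polynomial congruence $F_iF_j \equiv \sum_h p^h_{ij}F_h \pmod{\Phi_d}$, which follows from \eqref{AiAjformula} and Lemmas~\ref{Ai_is_polynomial}, \ref{A1_minimal_polynomial}---is exactly what the paper's one-line proof is leaning on.
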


\begin{proof}
Compare Definition~\ref{cTDef} with Lemmas~\ref{xi_is_polynomial} and~\ref{xi_minimal_polynomial}.
\end{proof}

We would like to provide another presentation for $\cT_d.$ To do this, we first define the following algebra.

\begin{defn}\label{second_presentation_def}
Let $\cT_d'$ denote the algebra with $1$, with generators $e_0,\dots,e_d,e_0^*,\dots,e_d^*$ and relations (1)--(4).
\begin{enumerate}
    \item[\rm (1)] $\sum_{i=0}^d e_i = 1$.
    \item[\rm (2)] $\sum_{i=0}^d e_i^* = 1$.
    \item[\rm (3)] $e_h^*x_ie_j^* = 0 \text{ if } (h,i,j) \notin P_d \qquad (0 \leq h,i,j \leq d)$.
    \item[\rm (4)] $e_hx_i^*e_j = 0 \text{ if } (h,i,j) \notin P_d \qquad (0 \leq h,i,j \leq d)$.
\end{enumerate}
In the above lines, we define
\begin{align}
    x_0 &= 1, & x_0^* &= 1,\\
    x_1 &= \sum_{i=0}^d (d-2i) e_i, & x_1^* &= \sum_{i=0}^d (d-2i)e_i^*, \label{e:x1_is_sum}\\
    x_i &= F_i(x_1), & x_i^* &= F_i(x_1^*) & (2 \leq i \leq d). \label{e:xi=Fi(x_1)}
\end{align}
\end{defn}

We will soon show that the algebra $\cT_d$ is isomorphic to $\cT_d'.$ We first give some lemmas about $\cT_d'$.

\begin{lem}\label{e_idempotent_in_prime}
For $0 \leq i \leq d$, the following (i), (ii) hold in $\cT_d'$.
\renumerate
    \item $e_ie_j = \delta_{ij}e_i.$
    \item $e_i^*e_j^* = \delta_{ij}e_i^*.$
\eenumerate
\end{lem}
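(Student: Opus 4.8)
The goal is to show that in $\cT_d'$ — the algebra presented by generators $e_0,\dots,e_d,e_0^*,\dots,e_d^*$ and relations (1)--(4) of Definition~\ref{second_presentation_def} — the elements $\{e_i\}_{i=0}^d$ are mutually orthogonal idempotents, and dually for $\{e_i^*\}_{i=0}^d$. The plan is to deduce this purely from relation (1), $\sum_{i=0}^d e_i = 1$, together with the polynomial structure built into the definition of $x_1 = \sum_i (d-2i)e_i$ and $x_i = F_i(x_1)$; relations (3) and (4) will not be needed. By symmetry (swapping $e_i \leftrightarrow e_i^*$, $x_i \leftrightarrow x_i^*$ exchanges (1)$\leftrightarrow$(2) and (3)$\leftrightarrow$(4)), it suffices to prove part (i).

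Here is the mechanism I would use. From \eqref{e:x1_is_sum} and \eqref{e:xi=Fi(x_1)} we get $x_i = F_i(x_1) = \sum_{j=0}^d F_i(\theta_j) e_j = \sum_{j=0}^d p_i(j) e_j$, using Lemma~\ref{Ai_is_polynomial} ($p_i(j) = F_i(\theta_j)$) and Lemma~\ref{eigenvalue_formula} ($\theta_j = d-2j$), and this also holds for $i=0,1$ since $p_0(j)=1$ by \eqref{e: p0(j)=1} and $F_1 = z$. Now invert: the matrices $P = (p_j(i))$ and $|X|^{-1}Q = (|X|^{-1}q_j(i))$ are inverse to each other (stated in the excerpt, with $|X| = 2^d$), so applying the change of basis we obtain $e_i = 2^{-d}\sum_{j=0}^d q_i(j) x_j$ in $\cT_d'$ — which is consistent with, and in fact recovers, the definition of $e_i$ given in Definition~\ref{second_presentation_def}. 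The key point is that the span $\cC' := \mathrm{span}\{x_0,\dots,x_d\} = \mathrm{span}\{e_0,\dots,e_d\}$ inside $\cT_d'$ is a commutative subalgebra (the $x_i$ commute since they are all polynomials in $x_1$; it is closed under multiplication because $x_i x_j = F_i(x_1)F_j(x_1)$ is a polynomial in $x_1$ of degree $\le i+j \le 2d$, which reduces modulo the minimal polynomial of $x_1$ — here we need that $\Phi_d(x_1) = 0$ in $\cT_d'$, which follows from $x_1 = \sum_j (d-2j)e_j$ and $\sum_j e_j = 1$: indeed $\Phi_d(x_1) = \prod_{i=0}^d (x_1 - (d-2i))$ and one checks this kills each $e_j$, hence kills $1 = \sum_j e_j$, once we know the $e_j$ are idempotent — so this is slightly circular and must be bootstrapped carefully).

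To break the apparent circularity cleanly, I would argue as follows. Work in the commutative algebra $\C[x_1]$ inside $\cT_d'$. Relation (1) says $\sum_{i=0}^d e_i = 1$; combined with the formula $e_i = 2^{-d}\sum_j q_i(j) x_j$ and the inverse relation $x_j = \sum_i p_j(i) e_i$, the $e_i$ and the $x_j$ span the same subspace $\cC'$ of $\cT_d'$, and $\cC' = \C[x_1]/(\mu)$ where $\mu$ is the minimal polynomial of $x_1$. Since $x_1 = \sum_{i=0}^d (d-2i) e_i$ is a $\C$-linear combination of the $d+1$ generators, $\dim \cC' \le d+1$, so $\deg \mu \le d+1$, i.e. $\mu \mid \Phi_d$. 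Conversely the $e_i$ are obtained from $1, x_1, x_1^2, \dots, x_1^d$ by the invertible Vandermonde-type transformation with nodes $\theta_0 > \dots > \theta_d$, so $1, x_1, \dots, x_1^d$ are linearly independent precisely when the $e_i$ are, and the Lagrange interpolation identity gives $e_i = \prod_{j \ne i}\frac{x_1 - \theta_j}{\theta_i - \theta_j}$ as elements of $\C[x_1]$, whence $e_i e_j = \prod_{k\ne i}\frac{x_1-\theta_k}{\theta_i-\theta_k}\prod_{k\ne j}\frac{x_1-\theta_k}{\theta_j-\theta_k}$; for $i \ne j$ the numerator is divisible by $\prod_{k}(x_1 - \theta_k) = \Phi_d(x_1)$, so $e_ie_j$ lies in the ideal generated by $\Phi_d(x_1)$. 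Finally, $\Phi_d(x_1) = 0$ in $\cT_d'$: since $\prod_{i=0}^d (z - \theta_i)$ vanishes at each node and the Lagrange representation of the constant polynomial $1$ gives $\sum_i e_i \equiv 1 \pmod{\Phi_d(x_1)}$, relation (1) forces $\Phi_d(x_1)=0$. Then $e_ie_j = 0$ for $i\ne j$, and $e_i^2 = e_i \cdot 1 = e_i \sum_j e_j = e_i^2$ combined with $\sum_j e_ie_j = e_i$ gives $e_i^2 = e_i$. The main obstacle is precisely organizing this interpolation argument so that the deduction of $\Phi_d(x_1) = 0$ from relation (1) does not secretly assume idempotency; the cleanest route is to phrase everything inside $\C[x_1] \subseteq \cT_d'$ and use that $\{e_i\}$ is, by definition, the image of the Lagrange basis, so that relation (1) is literally the statement $\sum_i L_i(x_1) = 1$ where $L_i$ is the $i$-th Lagrange polynomial for the nodes $\theta_0,\dots,\theta_d$ — an identity of polynomials modulo $\Phi_d$, which upgrades to an identity in $\cT_d'$ exactly when $\Phi_d(x_1) = 0$, and relation (1) supplies this.
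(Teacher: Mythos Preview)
Your proposal has a genuine gap: the claim that relations (3) and (4) are unnecessary is false, and the attempted bootstrap is circular in a way that cannot be repaired.

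In Definition~\ref{second_presentation_def} the symbols $e_0,\dots,e_d$ are \emph{generators} of $\cT_d'$; the element $x_1$ is \emph{defined} as the linear combination $\sum_i (d-2i)e_i$. Consequently $x_1^2 = \sum_{i,j}\theta_i\theta_j\,e_ie_j$ is a quadratic expression in the generators, and in general $x_1^k$ lives in the span of degree-$k$ monomials in the $e_i$. There is no relation that forces $e_i$ to lie in $\C[x_1]$, and the assertion ``$e_i = L_i(x_1)$ by Lagrange interpolation'' already presupposes $x_1^k = \sum_i \theta_i^k e_i$, i.e.\ presupposes the orthogonal-idempotent relations you are trying to prove. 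The same circularity infects the derivation of $\Phi_d(x_1)=0$: expanding $\prod_i(x_1-\theta_i)$ produces a sum over all products $e_{i_0}e_{i_1}\cdots e_{i_d}$, and relation~(1) alone does not kill these. Concretely, the algebra with generators $e_0,\dots,e_d$ and the single relation $\sum_i e_i=1$ is the free algebra on $e_1,\dots,e_d$ (with $e_0=1-e_1-\cdots-e_d$), in which $e_ie_j\neq 0$ for $i\neq j$; so relation~(1) by itself cannot yield the conclusion.

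The paper's proof is short and uses relation~(4) directly: for $i\neq j$ the triple $(i,0,j)$ violates the triangle inequality, hence $(i,0,j)\notin P_d$ and relation~(4) gives $e_i x_0^* e_j=0$; since $x_0^*=1$ this is $e_ie_j=0$. Idempotency then follows from $e_i = e_i\sum_\ell e_\ell = e_i^2$. The point is that the triple-product relations are precisely what link the two families of generators and supply the missing orthogonality; your argument discards them and is left with nothing to work with.
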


\begin{proof}
(i) First assume that $i \neq j$. The triple $(i,0,j)$ violates Corollary~\ref{permissible_conditions} (i), thus $e_ix_0^*e_j = 0$ by relation (4) of Definition~\ref{second_presentation_def}. As $x_0^* = 1,$ we have $e_ie_j = 0.$

Next assume that $i = j.$ We will show that $e_i^2 = e_i.$ By relation (1) of Definition~\ref{second_presentation_def} and the previous paragraph,
$$e_i = e_i\sum_{\ell=0}^d e_\ell = e_i^2.$$

(ii) Similar to the proof of (i).
\end{proof}

Note that by \eqref{e:x1_is_sum} and Lemma~\ref{e_idempotent_in_prime}, the following equations hold in $\cT_d':$
\begin{equation}
    \big(x_1-(d-2i)\big)e_i = 0, \qquad \qquad \big(x_1^* - (d-2i)\big)e_i^* = 0 \qquad \qquad (0 \leq i \leq d).
    \label{e:x1_minus_theta}
\end{equation}

\begin{lem}\label{x1_minimal_poly_in_prime}
The following (i), (ii) hold in $\cT_d'$.
\renumerate
    \item $\Phi_d(x_1) = 0$.
    \item $\Phi_d(x_1^*) = 0.$
\eenumerate
\end{lem}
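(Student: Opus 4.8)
The plan is to show that $x_1$ (and similarly $x_1^*$) is annihilated by the polynomial $\Phi_d = \prod_{i=0}^d(z-(d-2i))$ by decomposing $x_1$ along the idempotents $e_i$. First I would use relation (1) of Definition~\ref{second_presentation_def}, namely $\sum_{i=0}^d e_i = 1$, to write, for any polynomial $p \in \C[z]$,
\begin{equation*}
  p(x_1) = p(x_1)\sum_{i=0}^d e_i = \sum_{i=0}^d p(x_1)e_i.
\end{equation*}
Then I would invoke \eqref{e:x1_minus_theta}, which gives $\big(x_1-(d-2i)\big)e_i = 0$, hence $x_1 e_i = (d-2i)e_i$, and by an immediate induction $x_1^n e_i = (d-2i)^n e_i$ for all $n \geq 0$. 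Consequently $p(x_1)e_i = p(d-2i)e_i$ for every $p \in \C[z]$ and every $0 \leq i \leq d$.

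Applying this with $p = \Phi_d$, each summand vanishes: since $\Phi_d(d-2i) = 0$ by the very definition of $\Phi_d$ in \eqref{e:definition_of_phid}, we get $\Phi_d(x_1)e_i = \Phi_d(d-2i)e_i = 0$ for $0 \leq i \leq d$. Summing over $i$ and using $\sum_{i=0}^d e_i = 1$ again yields
\begin{equation*}
  \Phi_d(x_1) = \sum_{i=0}^d \Phi_d(x_1)e_i = 0,
\end{equation*}
which proves (i). Part (ii) is proved in exactly the same way, using relation (2) of Definition~\ref{second_presentation_def} ($\sum_{i=0}^d e_i^* = 1$) and the second equation in \eqref{e:x1_minus_theta}.

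There is no real obstacle here; the argument is short and purely formal once one has \eqref{e:x1_minus_theta} and Lemma~\ref{e_idempotent_in_prime} in hand. The only point requiring a moment's care is confirming that $x_1 e_i = (d-2i)e_i$ truly follows from \eqref{e:x1_minus_theta} (it does, being just a rearrangement) and that the expansion $p(x_1) = \sum_i p(x_1)e_i$ is legitimate, which it is since the $e_i$ are a resolution of the identity. One could alternatively streamline the write-up by simply noting that $\Phi_d(x_1) = \Phi_d(x_1)\sum_i e_i$ and that $\Phi_d(x_1)e_i = 0$ because the factor $\big(x_1-(d-2i)\big)$ of $\Phi_d(x_1)$ kills $e_i$ by \eqref{e:x1_minus_theta}, avoiding the induction on powers entirely.
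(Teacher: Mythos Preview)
Your proposal is correct and follows essentially the same approach as the paper: use the resolution of the identity $\sum_i e_i = 1$ together with \eqref{e:x1_minus_theta} to kill $\Phi_d(x_1)e_i$ for each $i$. In fact, the streamlined variant you mention at the end---observing directly that the factor $\big(x_1-(d-2i)\big)$ of $\Phi_d(x_1)$ annihilates $e_i$---is precisely the argument the paper gives, so your induction on powers is an unnecessary (though harmless) detour.
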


\begin{proof}
(i) First note that the term $\big(z-(d-2i)\big)$ is a factor of the right-hand side of \eqref{e:definition_of_phid} ($0 \leq i \leq d$). Thus by Definition~\ref{Phi_def} and \eqref{e:x1_minus_theta},
\begin{equation*}
    \Phi_d(x_1)e_i = 0 \qquad \qquad (0 \leq i \leq d).
    \label{e:phid(x1)ei=0}
\end{equation*}
Hence by relation (1) of Definition~\ref{second_presentation_def},
$$\Phi_d(x_1) = \Phi_d(x_1)\sum_{i=0}^d e_i = 0.$$

(ii) Similar to the proof of (i).
\end{proof}

\begin{lem}\label{fi_equals_ei}
For $0 \leq i \leq d,$ the following (i), (ii) hold in $\cT_d'$.
\renumerate
    \item $e_i = 2^{-d}\sum_{j=0}^d q_i(j)x_j$.
    \item $e_i^* = 2^{-d}\sum_{j=0}^d q_i^*(j)x_j^*$.
\eenumerate
\end{lem}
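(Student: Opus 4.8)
The plan is to verify the identity in (i) by multiplying the proposed right-hand side against the orthogonal idempotents $e_0,\dots,e_d$ of $\cT_d'$, and then invoking the fact that $P$ and $|X|^{-1}Q$ are mutually inverse matrices; part (ii) will follow by the dual argument. The starting observation is that \eqref{e:x1_minus_theta} gives $x_1 e_\ell = (d-2\ell)e_\ell$ in $\cT_d'$, so by iteration $g(x_1)e_\ell = g(d-2\ell)e_\ell$ for every $g \in \C[z]$ and $0 \le \ell \le d$. Combining this with $x_0 = 1$, with \eqref{e:x1_is_sum}, \eqref{e:xi=Fi(x_1)}, with $\theta_\ell = d-2\ell$ (Lemma~\ref{eigenvalue_formula}), with $p_j(\ell) = F_j(\theta_\ell)$ (Lemma~\ref{Ai_is_polynomial}), and with \eqref{e: p0(j)=1}, I would deduce
$$x_j e_\ell = F_j(d-2\ell)\,e_\ell = F_j(\theta_\ell)\,e_\ell = p_j(\ell)\,e_\ell \qquad (0 \le j,\ell \le d).$$

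Next, writing $\tilde e_i = 2^{-d}\sum_{j=0}^d q_i(j)x_j$ for the proposed expression and recalling that $\cQ_d$ has $|X| = 2^d$ vertices, I would compute, for $0 \le i,\ell \le d$,
$$\tilde e_i\,e_\ell = 2^{-d}\sum_{j=0}^d q_i(j)\,p_j(\ell)\,e_\ell = \delta_{i\ell}\,e_\ell,$$
where the last equality is the statement that $P$ and $2^{-d}Q$ are inverses (equivalently, $2^{-d}\sum_j p_j(\ell)q_i(j) = \delta_{i\ell}$). Then, multiplying on the right by $1 = \sum_{\ell=0}^d e_\ell$ (relation (1) of Definition~\ref{second_presentation_def}) yields
$$\tilde e_i = \tilde e_i \sum_{\ell=0}^d e_\ell = \sum_{\ell=0}^d \tilde e_i e_\ell = e_i,$$
which is (i). For (ii) I would run the identical argument with $e_\ell^*,x_j^*,q_i^*,p_j^*$ in place of $e_\ell,x_j,q_i,p_j$, using the dual relation in \eqref{e:x1_minus_theta}, Corollary~\ref{Ai*_is_polynomial}, Corollary~\ref{dual_eigenvalue_formula}, and relation (2) of Definition~\ref{second_presentation_def}; self-duality of $\cQ_d$ (Lemma~\ref{Qd_is_selfdual}) identifies the dual $P^*,Q^*$ with $P,Q$, so the same inverse-matrix fact applies.

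I do not expect a genuine obstacle here; the computation is short and self-contained given the earlier lemmas. The one point requiring care is that all products live in the noncommutative algebra $\cT_d'$, so throughout one must keep each $e_\ell$ (resp.\ $e_\ell^*$) on the correct side and only use the one-sided identity $x_j e_\ell = p_j(\ell)e_\ell$, which is valid precisely because $x_j$ is a polynomial in $x_1$ and $\big(x_1-(d-2\ell)\big)e_\ell = 0$.
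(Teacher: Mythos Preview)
Your proof is correct and follows essentially the same approach as the paper: both arguments establish that $x_j e_\ell = p_j(\ell)\,e_\ell$ (the paper phrases this as $x_j = \sum_\ell p_j(\ell)e_\ell$, using Lemma~\ref{e_idempotent_in_prime}) and then invoke the fact that $P$ and $2^{-d}Q$ are inverse matrices. Your version carries out the inversion by testing against each $e_\ell$ and summing via relation~(1), whereas the paper simply cites the change-of-basis statement directly, but the substance is the same.
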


\begin{proof}
(i) By \eqref{e:xi=Fi(x_1)} and Lemma~\ref{e_idempotent_in_prime},
$$x_i = F_i(x_1) = \sum_{j=0}^d F_i(\theta_j)e_j.$$
Hence by Lemma~\ref{Ai_is_polynomial},
\begin{equation}
    x_i = \sum_{j=0}^d p_i(j)e_j.
    \label{e:xi_is_sum_pi(j)}
\end{equation}
The result follows from \eqref{e:xi_is_sum_pi(j)} and the fact that the matrices $P$, $2^{-d}Q$ from below \eqref{thetai_equals_p1} are inverses.

(ii) Similar to the proof of (i).
\end{proof}

We now show that the algebra $\cT_d$ is isomorphic to $\cT_d'$.

\begin{prop}\label{second_presentation}
There exists a unique algebra isomorphism $\cT_d \to \cT_d'$ that sends
$$x_1 \mapsto x_1, \qquad \qquad x_1^* \mapsto x_1^*.$$
Moreover, this map sends
\begin{align*}
    x_i &\mapsto x_i, &x_i^* &\mapsto x_i^*, \\
    e_i &\mapsto e_i, &e_i^* &\mapsto e_i^*,
\end{align*}
for $0 \leq i \leq d$.
\end{prop}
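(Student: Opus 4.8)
The plan is to construct the isomorphism in both directions by exhibiting mutually inverse algebra homomorphisms, using the two presentations we have in hand: the simplified presentation of $\cT_d$ from Proposition~\ref{cTd_simplified} (generators $x_1,x_1^*$, relations (1)--(4)) and the presentation of $\cT_d'$ from Definition~\ref{second_presentation_def} (generators $e_0,\dots,e_d,e_0^*,\dots,e_d^*$, relations (1)--(4)). The key point is that each presentation is a quotient of a free algebra by an ideal generated by explicit relations, so to define a homomorphism out of it, I only need to check that the images of the generators satisfy those relations.

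First I would build the map $\varphi:\cT_d \to \cT_d'$. Sending $x_1 \mapsto x_1$ and $x_1^* \mapsto x_1^*$ (the right-hand sides being the elements of $\cT_d'$ defined in \eqref{e:x1_is_sum}), I must verify relations (1)--(4) of Proposition~\ref{cTd_simplified} hold in $\cT_d'$. Relations (1) and (2), namely $\Phi_d(x_1)=0$ and $\Phi_d(x_1^*)=0$, are exactly Lemma~\ref{x1_minimal_poly_in_prime}. For relations (3) and (4), I first note that the auxiliary elements $x_i = F_i(x_1)$, $e_i = 2^{-d}\sum_j q_i(j)x_j$ in the presentation of $\cT_d$ map, under $\varphi$, to the correspondingly-named elements of $\cT_d'$: this uses \eqref{e:xi=Fi(x_1)} for the $x_i$, and then Lemma~\ref{fi_equals_ei} to identify $2^{-d}\sum_j q_i(j)x_j$ with $e_i$ in $\cT_d'$ (and similarly on the dual side). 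Once the named elements correspond, relations (3) and (4) of Proposition~\ref{cTd_simplified} are literally relations (3) and (4) of Definition~\ref{second_presentation_def}. Hence $\varphi$ exists, is unique (the $x_1,x_1^*$ generate), and sends $x_i\mapsto x_i$, $x_i^*\mapsto x_i^*$, $e_i\mapsto e_i$, $e_i^*\mapsto e_i^*$.

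Next I would build the reverse map $\psi:\cT_d' \to \cT_d$, sending $e_i \mapsto e_i$ and $e_i^* \mapsto e_i^*$ (the right-hand sides being the elements of $\cT_d$ from Definition~\ref{cTDef}, equivalently Proposition~\ref{cTd_simplified}). I must check relations (1)--(4) of Definition~\ref{second_presentation_def} hold in $\cT_d$: relations (1) and (2) are Lemma~\ref{e_sum_1}; and to get relations (3), (4) I need that the elements $x_1 = \sum_i(d-2i)e_i$ and $x_i = F_i(x_1)$ built from the $e_i$ inside $\cT_d'$ map to the elements $x_1, x_i$ of $\cT_d$. For $x_1$ this is Lemma~\ref{x1_sum_eigenvalues}(i) together with $\theta_i = d-2i$ (Lemma~\ref{eigenvalue_formula}); then $F_i(x_1)$ maps to $F_i(x_1) = x_i$ by Lemma~\ref{xi_is_polynomial}(i), and dually. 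With the named elements matched up, relations (3) and (4) of Definition~\ref{second_presentation_def} become relations (3) and (4) of Proposition~\ref{cTd_simplified}, which hold in $\cT_d$. So $\psi$ exists.

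Finally, $\psi\circ\varphi$ fixes $x_1$ and $x_1^*$, hence is the identity on $\cT_d$ since those generate $\cT_d$ (Lemma~\ref{xi_is_polynomial}); and $\varphi\circ\psi$ fixes each $e_i$ and $e_i^*$, hence is the identity on $\cT_d'$ since those generate $\cT_d'$ by construction. Therefore $\varphi$ is an isomorphism with inverse $\psi$, and it has the stated action on the $x_i, x_i^*, e_i, e_i^*$. The only genuinely delicate bookkeeping — and the step I expect to absorb most of the work — is the bidirectional verification that the "derived" elements (the $x_i$ expressed via $F_i$, and the $e_i$ expressed via the matrices $Q$, $2^{-d}Q$) are carried to their namesakes, since this is what makes relations (3)--(4) transfer verbatim between the two presentations; everything else is a direct appeal to the lemmas already established.
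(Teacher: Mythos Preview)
Your proposal is correct and follows essentially the same approach as the paper: construct homomorphisms in both directions by verifying the defining relations of each presentation hold in the other algebra (using Lemmas~\ref{e_sum_1}, \ref{x1_sum_eigenvalues}, \ref{xi_is_polynomial}, \ref{x1_minimal_poly_in_prime}, \ref{fi_equals_ei} exactly as you indicate), then check the two maps are mutual inverses on generators. The paper's proof is organized identically, with your $\varphi,\psi$ called $\sigma,\tau$.
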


\begin{proof}
We will first show that there exists an algebra homomorphism $\sigma:\cT_d \to \cT_d'$ which sends $x_1 \mapsto x_1$ and $x_1^* \mapsto x_1^*.$ We will then show that there exists an algebra homomorphism $\tau:\cT_d' \to \cT_d$ which sends $e_i \mapsto e_i$ and $e_i^* \mapsto e_i^*$ ($0 \leq i \leq d$). We will next show that $\sigma$ and $\tau$ are inverses, and hence algebra isomorphisms. We will last show that $\sigma(x_i) = x_i$, $\sigma(x_i^*) = x_i^*,$ $\sigma(e_i) = e_i$, and $\sigma(e_i^*) = e_i^*$ ($0 \leq i \leq d$).

We begin by showing that $\sigma$ exists. For $0 \leq i \leq d,$ let $f_i = 2^{-d}\sum_{j=0}^d q_i(j)x_j \in \cT_d'$ and $f_i^* = 2^{-d}\sum_{j=0}^d q_i^*(j)x_j^* \in \cT_d'$. To show that $\sigma$ exists, it is sufficient to show that in $\cT_d',$
\begin{align}
    \Phi_d(x_1) &= 0, & \Phi_d(x_1^*) &= 0, \label{e:phid(x1)=0}\\
    f_h^*x_if_j^* &= 0, & f_hx_i^*f_j &= 0, \label{e:fhxifj}
\end{align}
for $0 \leq h,i,j \leq d$ such that $(h,i,j) \notin P_d.$ 

Lemma~\ref{x1_minimal_poly_in_prime} implies \eqref{e:phid(x1)=0}. Lemma~\ref{fi_equals_ei} implies that $f_i = e_i$ and $f_i^* = e_i^*$ ($0 \leq i \leq d$). Hence \eqref{e:fhxifj} follows by relations (3) and (4) of Definition~\ref{second_presentation_def}.

We next show $\tau$ exists. Let $y_1 = \sum_{i=0}^d \theta_i e_i \in \cT_d$ and $y_1^* = \sum_{i=0}^d \theta_i^*e_i^* \in \cT_d$. To show that $\tau$ exists, it is sufficient to show that in $\cT_d,$
\begin{align}
    \sum_{\ell=0}^d e_\ell &= 1, & \sum_{\ell=0}^d e_\ell^* &= 1, \label{e:sum_eell=0}\\
    e_h^*F_i(y_1)e_j^* &= 0, & e_hF_i(y_1^*)e_j &= 0,\label{ehyiej}
\end{align}
for $0 \leq h,i,j \leq d$ such that $(h,i,j) \notin P_d$.

Lemma~\ref{e_sum_1} implies \eqref{e:sum_eell=0}. Lemma~\ref{x1_sum_eigenvalues} implies that $x_1 = y_1$ and $x_1^* = y_1^*$. Hence $F_i(y_1) = x_i$ and $F_i(y_1^*) = x_i^*$ ($0 \leq i \leq d$) by Lemma~\ref{xi_is_polynomial}. Thus \eqref{ehyiej} follows by relations (3) and (4) of Proposition~\ref{cTd_simplified}.

We now show that $\sigma$ and $\tau$ are inverses. By Lemma~\ref{x1_sum_eigenvalues}, $\tau(x_1) = x_1$ and $\tau(x_1^*) = x_1^*.$ Thus $\tau \circ \sigma:\cT_d \to \cT_d$ is the identity map. By Lemma~\ref{fi_equals_ei}, $\sigma(e_i) = e_i$ and $\sigma(e_i^*) = e_i^*$ ($0 \leq i \leq d$). Thus $\sigma \circ \tau:\cT_d' \to \cT_d'$ is the identity map. Therefore $\sigma$ and $\tau$ are inverses, and hence algebra isomorphisms.

By construction, $\sigma(x_1) = x_1$ and $\sigma(x_1^*) = x_1^*.$ Thus $\sigma(x_i) = x_i$ and $\sigma(x_i^*) = x_i^*$ ($0 \leq i \leq d$). As noted previously, $\sigma(e_i) = e_i$ and $\sigma(e_i^*) = e_i^*$ ($0 \leq i \leq d$). This completes the proof.
\end{proof}

For the rest of the paper, we identify the algebras $\cT_d$ and $\cT_d'$ via the isomorphism in Proposition~\ref{second_presentation}.

We next define a free algebra.

\begin{defn}\label{fTdDef}
Let $\fT_d$ denote the free algebra with generators $\mathfrak e_0,\dots,\mathfrak e_d,\mathfrak e_0^*,\dots,\fe_d^*$. Define $\fx_0 = 1,$ $\fx_1 = \sum_{i=0}^d \theta_i\fe_i$, and $\fx_i = F_i(\fx_1)$ ($2 \leq i \leq d$). Similarly, define $\fx_0^* =1,$ $\fx_1^* = \sum_{i=0}^d \theta_i^*\fe_i^*$, and $\fx_i^* = F_i(x_1^*)$ ($2 \leq i \leq d$).
\end{defn}

For notational convenience, define
\begin{align}
    \fx_i = 0, \qquad \qquad \fx_i^* = 0, \qquad \qquad \fe_i = 0, \qquad \qquad \fe_i^* = 0 \qquad \qquad (i < 0 \text{ or }i > d).
    \label{fxi_less_0}
\end{align}

\begin{defn}\label{fSd_def}
Let $\fS_d$ denote the two-sided ideal of $\fT_d$ generated by the following (1)--(4).
\begin{enumerate}
    \item $\sum_{i=0}^d\fe_i - 1$,
    \item $\sum_{i=0}^d \fe_i^* - 1$,
    \item $\fe_h^*\fx_i\fe_j^* \text{ such that $(h,i,j) \notin P_d$} \qquad (0 \leq h,i,j \leq d),$
    \item $\fe_h\fx_i^*\fe_j \text{ such that $(h,i,j) \notin P_d$} \qquad (0 \leq h,i,j \leq d).$
\end{enumerate}
\end{defn}

\begin{rem}\label{fTd_hom_to_cTd}
Because $\fT_d$ is free, there exists an algebra homomorphism $\psi_d:\fT_d \to \cT_d$ that sends
\begin{align*}
    \fe_i \mapsto e_i ,\qquad \qquad \fe_i^* \mapsto e_i^* \qquad \qquad (0 \leq i \leq d).
\end{align*}
Comparing Definitions~\ref{second_presentation_def} and~\ref{fSd_def}, it follows that the map $\psi_d$ is surjective with kernel $\fS_d$, and that
$$\fx_i \mapsto x_i, \qquad \qquad \fx_i^* \mapsto x_i^* \qquad \qquad (0 \leq i \leq d).$$
\label{r:psid_exists}
\end{rem}

To end this section, we define an algebra homomorphism that will be useful later in the paper.

\begin{defn}\label{induced_iso_p}
Consider the quotient algebra $\fT_d/\fS_d$. With reference to Remark~\ref{r:psid_exists}, the algebra homomorphism $\psi_d$ induces an algebra isomorphism $\fT_d/\fS_d \to \cT_d$. We denote the inverse of this map by $p_d$.
\end{defn}

\vspace{25pt}


\section{The Primary Central Idempotent of $\cT_d$}\label{baseCase}

We continue our discussion of the algebra $\cT_d$ from Definition~\ref{TdDef}. In \cite{egge1}, Egge defines a certain element $u_0 \in \cT_d$ called the primary central idempotent. Later in the paper, we will use $u_0$ to compute the dimension of $\cT_d$. In this section, we recall the definition of $u_0$ and develop some basic facts about it.

\begin{lem}\label{u0_equality}
{\rm (See \cite[Propositions 11.1 and 11.4]{egge1})} For $d \geq 0,$ following holds in $\cT_d$:
\begin{equation}
2^{d}\sum_{i=0}^d k_i^{-1}e_i^*e_0e_i^* = 2^{d}\sum_{i=0}^d (k_i^*)^{-1}e_ie_0^*e_i.
\label{e:u0_equality}
\end{equation}
This element is central and idempotent.
\end{lem}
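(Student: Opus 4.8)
The plan is to prove the identity \eqref{e:u0_equality} by exhibiting both sides as the \emph{same} element expressed in two different ways, using the fact (from Lemma~\ref{u0_equality}'s citation, but which we want to re-derive) that $u_0$ is the primary central idempotent. Concretely, I would first recall from \cite{egge1} that $u_0$ is defined as $2^{d}\sum_{i=0}^d k_i^{-1}e_i^*e_0e_i^*$ and work in the Terwilliger algebra image first: apply the surjection $\natural:\cT_d \to T_d$ and verify the matrix identity $2^{d}\sum_i k_i^{-1}E_i^*E_0E_i^* = 2^{d}\sum_i (k_i^*)^{-1}E_iE_0^*E_i$ in $T_d$. Since $E_0 = |X|^{-1}J$ and $E_0^* = A_0^*(x)\cdot(\text{something})$ — more precisely $E_0^*$ is the projection onto the vertex $x$ — one computes $(E_i^*E_0E_i^*)_{yy} = |X|^{-1}$ for $\partial(x,y)=i$ and $0$ otherwise, while $(E_iE_0^*E_i)_{yz}$ is an entry supported appropriately; self-duality of $\cQ_d$ (Lemma~\ref{Qd_is_selfdual}) together with $k_i = k_i^*$ (Corollaries~\ref{k0_and_kd} and~\ref{k0*_and_kd*}) matches the two expressions in $T_d$.

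However, this only proves the identity in $T_d$, not in $\cT_d$, and $\natural$ is \emph{a priori} not injective — indeed injectivity of $\natural$ is the paper's main theorem, which we cannot assume here. So the real content must be an argument internal to $\cT_d$. The approach I would take: show both sides of \eqref{e:u0_equality} lie in, and are determined within, the commutative subalgebras $\cC$ and $\cC^*$ after suitable manipulation. Specifically, using relations (3) and (4) of Definition~\ref{second_presentation_def} (the triple-product relations) together with Lemma~\ref{sum_of_xi} (which gives $e_0 = 2^{-d}\sum_i x_i$ and $e_0^* = 2^{-d}\sum_i x_i^*$), expand $e_i^*e_0e_i^* = 2^{-d}\sum_j e_i^* x_j e_i^*$. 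The triple-product relation forces $e_h^* x_j e_\ell^* = 0$ unless $(h,j,\ell)\in P_d$; with $h=\ell=i$ this requires $j$ even and $j \le 2i$ and $j \le 2(d-i)$. One then writes $e_i^* x_j e_i^*$ as a scalar multiple of $e_i^*$ — this scalar is exactly the relevant intersection number $p^i_{ij}/k_i$-type quantity — so that $2^{d}\sum_i k_i^{-1}e_i^*e_0 e_i^*$ becomes an explicit $\C$-linear combination $\sum_i c_i e_i^*$ with $c_i$ computable from the $p^h_{ij}$. Symmetrically the right-hand side becomes $\sum_i c_i^* e_i$, but after re-expanding $e_i$ and $e_i^*$ back in terms of the $x_j, x_j^*$ via \eqref{eq:ei_def} one should obtain the identical element of $\cT_d$.

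The cleanest route, and the one I would actually write up, is: let $v$ denote the left-hand side of \eqref{e:u0_equality} and $w$ the right-hand side; verify directly from the defining relations that $v$ is central (it commutes with every $e_j^*$ because $e_j^* v = e_j^*\,2^d k_j^{-1} e_j^* e_0 e_j^* = v e_j^*$ using orthogonality of the $e_i^*$ from Lemma~\ref{e_orthogonal}, and it commutes with every $e_j$ after rewriting — this is where the triple-product relations do the work), and similarly that $w$ is central; then show $v e_0^* = w e_0^*$ and $e_0 v = e_0 w$ by direct computation with Lemma~\ref{sum_of_xi}, and finally use centrality plus the fact that $e_0, e_0^*$ generate enough of $\cT_d$ (or that the difference $v-w$ is central and annihilates both $e_0$ and $e_0^*$, hence is zero) to conclude $v = w$. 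The main obstacle I anticipate is precisely the centrality verification: showing that $2^d\sum_i k_i^{-1} e_i^* e_0 e_i^*$ commutes with $x_1$ (equivalently with all $e_j$) requires carefully unwinding the triple-product relations $e_h^* x_i e_j^* = 0$ for $(h,i,j)\notin P_d$ and tracking the surviving intersection-number coefficients through the sum — this is the combinatorial heart of the lemma, and everything else is bookkeeping with Lemmas~\ref{e_sum_1}, \ref{e_orthogonal}, and~\ref{sum_of_xi}.
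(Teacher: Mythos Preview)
The paper does not actually prove this lemma; it is quoted verbatim from Egge \cite[Propositions~11.1 and~11.4]{egge1}, where the result is established for the generalized Terwilliger algebra of an arbitrary distance-regular graph. There is no argument in the present paper to compare against --- the lemma is used as a black box.

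Your attempted independent proof has two genuine gaps. First, the claim that ``one then writes $e_i^* x_j e_i^*$ as a scalar multiple of $e_i^*$'' is false: the triple-product relations only tell you when $e_h^* x_j e_\ell^*$ \emph{vanishes}, never that a surviving product collapses to a scalar times an idempotent. Already in $T_d$ this fails --- for instance $E_1^* A_2 E_1^*$ in $\cQ_d$ (for $d\ge 2$) is the all-ones-minus-identity matrix on the first subconstituent, not a multiple of $E_1^*$. So the line ``$2^{d}\sum_i k_i^{-1}e_i^*e_0 e_i^*$ becomes an explicit $\C$-linear combination $\sum_i c_i e_i^*$'' does not go through. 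Second, in your ``cleanest route'' the final step is unjustified: even granting that $v-w$ is central and that $(v-w)e_0 = (v-w)e_0^* = 0$, you have no description of the center of $\cT_d$ at this stage, so there is no mechanism to conclude $v-w=0$. Knowing the center of $\cT_d$ is essentially equivalent to the main theorem of the paper, so invoking it here would be circular. If you want a self-contained argument you will need to follow Egge's original construction rather than shortcutting through centrality.
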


\begin{defn}
\label{def:u0}
{\rm (See \cite[Proposition 11.1]{egge1})} Referring to Lemma~\ref{u0_equality}, we define $u_0$ to be the common value expressed in \eqref{e:u0_equality}. We call $u_0$ the \textit{primary central idempotent of $\cT_d$}.
\end{defn}

\begin{prop}\label{idempotentresults2} {\rm (See \cite[Proposition 11.5 and Theorem 12.5]{egge1})} For $d \geq 0,$ the following $(i)$--$(iii)$ hold.
\renumerate
    \item The sum $\cT_d = \cT_d u_0 + \cT_d(1-u_0)$ is direct.
    \item $\cT_d u_0$ and $\cT_d(1-u_0)$ are both two-sided ideals of $\cT_d$.
    \item The algebra $\cT_d u_0$ is isomorphic to $M_{d+1}(\C).$
\eenumerate
\end{prop}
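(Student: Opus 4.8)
The plan is to build on the known structure of the Terwilliger algebra $T_d$ and the idempotent $u_0$. First I would recall from \cite{egge1} (and the earlier parts of the excerpt) that $u_0$ is central and idempotent, so $\cT_d = \cT_d u_0 \oplus \cT_d(1-u_0)$ is a direct sum of two-sided ideals, which gives (i) and (ii) essentially by the general theory of central idempotents; the content is purely (iii). For (iii), the natural approach is to exhibit an explicit algebra isomorphism $\cT_d u_0 \to M_{d+1}(\C)$. The key observation is that $u_0$ should play the role, inside $\cT_d$, of the projection onto the "primary" or "standard" irreducible $T_d$-module (the one generated by $e_0^*$, of dimension $d+1$), and the map $\natural: \cT_d \to T_d$ should carry $\cT_d u_0$ onto the corresponding matrix block $M_{d+1}(\C)$ in Proposition~\ref{T_sum_of_matrices}.

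The key steps, in order: (1) Show that $\{e_i^* e_0 e_j^*\}$ spans $\cT_d u_0$, or more precisely that $\cT_d u_0$ is spanned by products $e_{i}^* x_1^r e_0 x_1^s e_j^*$ and then, using the defining relations (the triple-product relations and $\Phi_d(x_1)=0$, $\Phi_d(x_1^*)=0$), reduce these to a spanning set of size at most $(d+1)^2$ — the natural candidate being $\{b_{ij}\}_{0\le i,j\le d}$ where $b_{ij}$ is a suitable scalar multiple of $e_i^* e_0 e_j^*$. (2) Verify the matrix-unit-like multiplication rule $b_{ij} b_{k\ell} = \delta_{jk} b_{i\ell}$ inside $\cT_d$; here one uses $e_0 e_i^* e_0 = k_i^{-1} \cdot (\text{something})\, e_0$ — i.e. the scalar that comes from the fact that $E_0 E_i^* E_0$ is a known scalar times $E_0$ in the Bose--Mesner setting, and this scalar relation already lives in $\cC$ (resp. $\cC^*$), where $\natural$ restricts to an isomorphism by Lemmas~\ref{commutative_subalgebras} and~\ref{dual_commutative_subalgebras}. (3) Conclude that the span of $\{b_{ij}\}$ is a subalgebra of $\cT_d u_0$ isomorphic to a quotient of $M_{d+1}(\C)$, hence (since $M_{d+1}(\C)$ is simple) either $0$ or all of $M_{d+1}(\C)$; it is nonzero because $\natural(b_{ij})$ are the corresponding nonzero matrix units in the $M_{d+1}(\C)$ block of $T_d$. (4) Show the span of $\{b_{ij}\}$ equals $\cT_d u_0$ using the spanning statement from step (1) together with $u_0 = 2^d \sum_i k_i^{-1} e_i^* e_0 e_i^* = \sum_i b_{ii}$ being the identity of that subalgebra. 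Finally, $\natural$ restricted to $\cT_d u_0$ is then an injective (dimension count: $\dim \cT_d u_0 \le (d+1)^2 = \dim M_{d+1}(\C)$, and it surjects onto the $M_{d+1}$ block) algebra homomorphism onto $M_{d+1}(\C)$, giving (iii).

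Alternatively — and perhaps more cleanly — one can invoke the results of \cite{egge1} directly: Proposition~\ref{idempotentresults2} is literally cited as \cite[Proposition 11.5 and Theorem 12.5]{egge1}, so the honest "proof" here is to note that parts (i) and (ii) are \cite[Proposition 11.5]{egge1} and part (iii) is \cite[Theorem 12.5]{egge1}, possibly after remarking that the general-graph arguments of \cite{egge1} apply verbatim to $\cQ_d$ (which is distance-regular, so no specialization is needed). In that case the "proof" is a one-paragraph citation-and-translation, checking only that the hypotheses under which Egge proves those statements are met by every distance-regular graph, in particular $\cQ_d$, and that our normalization conventions (the factor $2^d = |X|$, the self-duality $k_i = k_i^*$ from Corollary~\ref{k0*_and_kd*}) match Egge's.

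\textbf{Main obstacle.} The hard part, if one writes a self-contained proof, is step (2): establishing the matrix-unit relations $b_{ij}b_{k\ell} = \delta_{jk}b_{i\ell}$ purely from the presentation of $\cT_d$, since this requires knowing that $e_0 e_j^* e_k^* e_0$ collapses to a scalar multiple of $e_0$ for $j = k$ and vanishes for $j \neq k$ — the vanishing is immediate from $e_j^* e_k^* = 0$, but the scalar for $j=k$, namely that $e_0 e_i^* e_0 = 2^{-d} k_i\, e_0$ (or whatever the correct constant is), has to be extracted from the commutative subalgebra structure and the relation $e_0 = 2^{-d}\sum_i x_i$ of Lemma~\ref{sum_of_xi}, combined with the triple-product relations controlling how $x_i$ interacts with the $e_j^*$. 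If instead one simply cites \cite{egge1}, there is no real obstacle beyond bookkeeping; I would lean toward the citation route for (i) and (ii) and give the explicit matrix-unit construction for (iii) so that the isomorphism is available in a usable, normalized form for the induction in Section~7.
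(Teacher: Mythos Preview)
The paper gives no proof of this proposition at all: it is stated with the parenthetical citation ``(See \cite[Proposition 11.5 and Theorem 12.5]{egge1})'' and nothing more. Your ``alternatively'' paragraph is therefore exactly the paper's approach --- a pure citation, with no translation or hypothesis-checking spelled out.

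Your longer matrix-unit argument is a reasonable reconstruction of what Egge actually does in \cite{egge1}, and it is not wrong, but it is not needed here and the paper does not reproduce it. One small correction if you ever do write it out: in step (2) the relevant scalar identity is $e_0 e_i^* e_0 = 2^{-d} k_i^* \, e_0$ (dual valency, not $k_i$), though for $\cQ_d$ self-duality makes $k_i = k_i^*$ so the slip is harmless in this setting. Also note that the explicit form of the isomorphism in (iii) is never used later in the paper --- Section~7 only needs the dimension count $\dim \cT_d u_0 = (d+1)^2$ and the ideal decomposition, so there is no payoff to writing out the matrix units ``in a usable, normalized form.''
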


\begin{cor}\label{Td_iso_to_M_and_ideal}
For $d \geq 0$, the algebra $\cT_d$ is isomorphic to the direct sum \\ $M_{d+1}(\C) \oplus \cT_d(1-u_0).$
\end{cor}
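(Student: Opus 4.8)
The plan is to obtain Corollary~\ref{Td_iso_to_M_and_ideal} as a direct consequence of Proposition~\ref{idempotentresults2}, combined with the general fact that a central idempotent splits an algebra into a direct sum of two-sided ideals, each of which is itself a unital algebra. Concretely, I would argue as follows.

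First I would invoke Proposition~\ref{idempotentresults2}(i),(ii): the sum $\cT_d = \cT_d u_0 + \cT_d(1-u_0)$ is direct, and both summands are two-sided ideals of $\cT_d$. Since $u_0$ is central and idempotent (Lemma~\ref{u0_equality}), the element $1-u_0$ is also central and idempotent, so $\cT_d u_0$ is a ring with multiplicative identity $u_0$ and $\cT_d(1-u_0)$ is a ring with multiplicative identity $1-u_0$; moreover the product of an element of $\cT_d u_0$ with an element of $\cT_d(1-u_0)$ is zero, because $u_0(1-u_0)=0$ and $u_0$ is central. Therefore the internal direct sum decomposition $\cT_d = \cT_d u_0 \oplus \cT_d(1-u_0)$ as $\C$-vector spaces is in fact a decomposition as algebras: the map $\cT_d \to \cT_d u_0 \oplus \cT_d(1-u_0)$, $a \mapsto (au_0, a(1-u_0))$, is a $\C$-algebra isomorphism onto the external direct sum of the two ideals regarded as unital algebras in their own right.

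Then I would apply Proposition~\ref{idempotentresults2}(iii), which gives an algebra isomorphism $\cT_d u_0 \cong M_{d+1}(\C)$. Substituting this into the decomposition above yields an algebra isomorphism $\cT_d \cong M_{d+1}(\C) \oplus \cT_d(1-u_0)$, which is exactly the claim. For $d=0$ one can note that $M_1(\C)\cong\C$ and, consistent with Remark~\ref{T0_is_C}, the ideal $\cT_0(1-u_0)$ vanishes; but this special case is already subsumed by the general argument, so no separate treatment is strictly needed.

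There is essentially no obstacle here: the statement is a routine repackaging of Proposition~\ref{idempotentresults2}, and the only point requiring (minimal) care is the standard observation that a two-sided ideal generated by a central idempotent is closed under multiplication and unital, so that the vector-space direct sum of Proposition~\ref{idempotentresults2}(i) upgrades to an algebra direct sum. Accordingly the proof is a one- or two-line deduction citing Proposition~\ref{idempotentresults2}.
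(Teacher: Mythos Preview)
Your proposal is correct and matches the paper's own approach: the paper's proof is the single line ``Follows from Proposition~\ref{idempotentresults2},'' and your argument simply spells out the routine details behind that citation.
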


\begin{proof}
Follows from Proposition~\ref{idempotentresults2}.
\end{proof}

\begin{cor}\label{Tdu0_quotient}
There exists an algebra isomorphism $\cT_d/\cT_du_0 \to \cT_d(1-u_0)$ that sends
$$e_i + \cT_du_0 \mapsto e_i(1-u_0), \qquad \qquad e_i^* +\cT_du_0 \mapsto e_i^*(1-u_0) \qquad \qquad (0 \leq i \leq d).$$
\end{cor}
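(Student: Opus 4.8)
The plan is to apply the first isomorphism theorem to the map on $\cT_d$ given by right multiplication by $1-u_0$.

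First I would record the elementary properties of $1-u_0$. By Lemma~\ref{u0_equality}, $u_0$ is central and idempotent in $\cT_d$, hence so is $1-u_0$. Moreover $1-u_0$ is the multiplicative identity of the subalgebra $\cT_d(1-u_0)$: for any $a \in \cT_d$, centrality and idempotency give $(1-u_0)\,a(1-u_0) = a(1-u_0)^2 = a(1-u_0)$. Thus $\cT_d(1-u_0)$ is a unital algebra with identity $1-u_0$, so it is meaningful to speak of an algebra isomorphism onto it.

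Next I would define $\pi:\cT_d \to \cT_d(1-u_0)$ by $\pi(a) = a(1-u_0)$. Using that $1-u_0$ is central and idempotent, $\pi(a)\pi(b) = a(1-u_0)b(1-u_0) = ab(1-u_0)^2 = ab(1-u_0) = \pi(ab)$, and $\pi(1) = 1-u_0$ is the identity of the target; so $\pi$ is a unital algebra homomorphism. It is clearly surjective, since every element of $\cT_d(1-u_0)$ has the form $a(1-u_0) = \pi(a)$. To compute the kernel, note $\pi(a) = a - au_0$, so $\pi(a) = 0$ iff $a = au_0$; if $a = au_0$ then $a \in \cT_d u_0$, and conversely if $a = bu_0 \in \cT_d u_0$ then $au_0 = bu_0^2 = bu_0 = a$, so $\pi(a) = 0$. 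Hence $\ker \pi = \cT_d u_0$, which is a two-sided ideal of $\cT_d$ by Proposition~\ref{idempotentresults2}(ii). By the first isomorphism theorem, $\pi$ induces an algebra isomorphism $\cT_d/\cT_d u_0 \to \cT_d(1-u_0)$ sending $a + \cT_d u_0 \mapsto a(1-u_0)$ for all $a \in \cT_d$. Specializing to $a = e_i$ and $a = e_i^*$ for $0 \le i \le d$ yields the claimed formulas.

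There is essentially no obstacle here: the statement is the standard fact that modding out by the ideal generated by a central idempotent is the same as multiplying by the complementary idempotent. The only point to handle with a little care is the bookkeeping that $\cT_d(1-u_0)$ is unital with identity $1-u_0$, so that the conclusion is an isomorphism of unital algebras; everything else is routine and uses only centrality and idempotency of $u_0$ together with Proposition~\ref{idempotentresults2}(ii).
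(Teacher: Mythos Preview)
Your proof is correct and is essentially the same as the paper's, which simply says the result follows from Proposition~\ref{idempotentresults2}(i); you have just spelled out explicitly the standard first isomorphism theorem argument underlying that one-line citation.
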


\begin{proof}
Follows from Proposition~\ref{idempotentresults2} (i).
\end{proof}

We have some comments about $u_0$.

\begin{lem}\label{u0e0}
For $d \geq 0,$ the following (i)--(iv) hold in $\cT_d$.
\renumerate
    \item $u_0e_0 = e_0$.
    \item $u_0e_d = e_d$.
    \item $u_0e_0^* = e_0^*$.
    \item $u_0e_d^* = e_d^*$.
\eenumerate
\end{lem}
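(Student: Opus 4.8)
The plan is to prove each of the four statements directly from the defining formula for $u_0$ in Lemma~\ref{u0_equality}, together with the orthogonality relations in Lemma~\ref{e_idempotent_in_prime} and the triple product constraints (3), (4) of Definition~\ref{second_presentation_def}. By symmetry, I will focus on (i); the proof of (iii) is the same after swapping starred and unstarred symbols, and (ii), (iv) follow by a parallel argument after noting that the role of $e_0$ is played by $e_d$.

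For (i), I would multiply the first expression in \eqref{e:u0_equality} on the right by $e_0$:
\[
u_0 e_0 = 2^d \sum_{i=0}^d k_i^{-1} e_i^* e_0 e_i^* e_0.
\]
The key observation is that $e_i^* e_0 e_i^* e_0$ simplifies drastically. First, $e_i^* e_0 e_i^*$ lies in the space spanned by products of the form $e_i^* x_\ell e_i^*$ (since $e_0 = 2^{-d}\sum_\ell x_\ell$ by Lemma~\ref{sum_of_xi}(i)), and by relation (3) of Definition~\ref{second_presentation_def} the term $e_i^* x_\ell e_i^*$ vanishes unless $(i,\ell,i) \in P_d$, which forces $\ell$ even and $\ell \le 2i$ and $\ell \le 2(d-i)$. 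Then I multiply by $e_0$ on the right and use relation (4) together with Lemma~\ref{e_idempotent_in_prime}: a term $e_i^* x_\ell e_i^* e_0$ can be analyzed by writing the rightmost $e_i^* e_0 = e_i^* e_0$ and using that $e_i^* e_0$, upon further expansion, is governed by the triple $(i, m, 0)$ appearing in $e_i^* x_m e_0$-type products; the triangle inequality in $P_d$ forces $m = i$, so only the top-distance contributions survive. I expect the cleanest route is to first establish the auxiliary identity $e_i^* e_0 e_i^* = k_i \cdot 2^{-d} e_i^* e_0 e_i^*$... more carefully, that $e_0 e_i^* e_0 = \delta$-type multiple of $e_0$, i.e. $e_0 e_i^* e_0 = (k_i / 2^d)\, e_0$ or similar, which is the generalized-algebra analogue of the standard fact $E_0 E_i^* E_0 = |X|^{-1} k_i^* E_0$ in $T(x)$; this identity should be derivable inside $\cT_d$ from Lemma~\ref{sum_of_xi}, Lemma~\ref{e_idempotent_in_prime}, and the triple product relations, since $e_0$ is a rank-one-type idempotent and $e_0 x_\ell e_0$ collapses by the triangle inequality to a scalar multiple of $e_0$.

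Granting that $e_0 e_i^* e_0 = (k_i^* / 2^d)\, e_0$ and its starred analogue $e_0^* e_i e_0^* = (k_i / 2^d)\, e_0^*$, the computation finishes quickly. Using the second expression in \eqref{e:u0_equality} is more convenient for multiplying by $e_0$ on the right:
\[
u_0 e_0 = 2^d \sum_{i=0}^d (k_i^*)^{-1} e_i e_0^* e_i e_0.
\]
Here $e_i e_0^* e_i e_0 = e_i e_0^* e_i e_0$; now insert $1 = \sum_j e_j$ is not needed — instead observe $e_i e_0 = \delta_{i0} e_0$ by Lemma~\ref{e_idempotent_in_prime}(i), so every term with $i \ge 1$ dies, leaving only $i = 0$: $u_0 e_0 = 2^d (k_0^*)^{-1} e_0 e_0^* e_0 e_0 = 2^d e_0 e_0^* e_0$, using $k_0^* = 1$ and $e_0^2 = e_0$. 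Finally $e_0 e_0^* e_0 = (k_0 / 2^d) e_0 = 2^{-d} e_0$ by the auxiliary identity with $i = 0$ (since $k_0 = 1$), giving $u_0 e_0 = e_0$, as desired. For (iii), apply the same reasoning to the first expression in \eqref{e:u0_equality} after multiplying on the right by $e_0^*$. For (ii) and (iv), the same argument works with $e_0$ replaced by $e_d$ and $e_0^*$ by $e_d^*$; the one extra ingredient is the analogue $e_d e_i^* e_d = (k_{d-i}^*/2^d) e_d$ type identity, or more directly that $e_d x_\ell e_d$ again collapses by the $P_d$ conditions (note $(d,\ell,d)\in P_d$ forces $\ell$ even with $\ell \le 2(d-d) = 0$... wait, $d + \ell + d \le 2d$ forces $\ell \le 0$, so only $\ell = 0$), whence $e_d x_\ell e_d = \delta_{\ell 0} e_d$ and the computation is even shorter.

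The main obstacle is establishing the auxiliary "collapse" identity $e_0 e_i^* e_0 = (k_i^*/2^d) e_0$ (and its variants) purely inside $\cT_d$, without appealing to the matrix model $T(x)$. This requires expanding $e_0 = 2^{-d}\sum_\ell x_\ell$, then controlling $e_0 x_\ell e_0$; since $e_0$ is not a priori known to be rank one in $\cT_d$, one cannot simply quote the $T(x)$ fact. Instead one argues: $x_\ell e_0 = p_\ell(0) e_0 = k_\ell e_0$ would require knowing $e_0$ is the primitive idempotent for the top eigenvalue, which indeed follows since $x_1 e_0 = \theta_0 e_0 = d\, e_0$ by \eqref{e:x1_minus_theta} and hence $x_\ell e_0 = F_\ell(d) e_0 = p_\ell(0) e_0 = k_\ell e_0$. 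Therefore $e_0 e_i^* e_0 = 2^{-d}\sum_\ell x_\ell^{*}$-expansion... more directly, $e_0 e_i^* e_0 = e_0 \big(2^{-d}\sum_j q_i^*(j) x_j^*\big)$-type manipulation reduces, via $x_j^* e_0$ having no simple eigenvalue property, so the cleaner path is: write $e_i^* = 2^{-d}\sum_m q_i^*(m) x_m^*$ and compute $e_0 x_m^* e_0$; but $e_0 x_m^* e_0$ need not collapse by a triangle inequality the way $e_h^* x_i e_j^*$ does, since relation (3) is about $e^*$-sandwiching of unstarred $x$, not $e$-sandwiching of starred $x^*$ — for the latter we use relation (4), which sandwiches $x_i^*$ between $e_h$ and $e_j$. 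So $e_0 x_m^* e_0$ is controlled by $(0, m, 0) \in P_d$, forcing $m = 0$, hence $e_0 x_m^* e_0 = \delta_{m0} e_0$, and summing gives $e_0 e_i^* e_0 = 2^{-d} q_i^*(0) e_0 = 2^{-d} k_i^* e_0$ since $q_i^*(0) = q_i(0) = k_i^*$... checking that $q_i(0) = k_i$: from $E_i = |X|^{-1}\sum_j q_i(j) A_j$, the $(x,x)$ entry gives... in any case $q_i^*(0)$ is the relevant dual valency up to the conventions in the paper, and one pins it down from \eqref{e:pij_and_qij} and the fact that $Q$ is a change-of-basis matrix. Once this is nailed, everything else is bookkeeping.
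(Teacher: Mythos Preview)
Your approach is correct and is essentially the same as the paper's: use the expression $u_0 = 2^d\sum_i (k_i^*)^{-1} e_i e_0^* e_i$, collapse the sum via $e_i e_0 = \delta_{i0} e_0$, and then reduce $2^d e_0 e_0^* e_0$ to $e_0$ by expanding $e_0^*$ and killing $e_0 x_m^* e_0$ for $m\ge 1$ with relation~(4). The one unnecessary detour is your attempt to establish the general identity $e_0 e_i^* e_0 = (k_i^*/2^d)\,e_0$ and the attendant worry about pinning down $q_i^*(0)$; the paper avoids this entirely by invoking Lemma~\ref{sum_of_xi}(ii), which gives $e_0^* = 2^{-d}\sum_m x_m^*$ directly, so only the $i=0$ case is needed and no $q$-coefficients enter. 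Your treatment of (ii) and (iv) via $(d,\ell,d)\in P_d \Rightarrow \ell=0$ likewise matches the paper.
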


\begin{proof}
(i) By Lemma~\ref{e_orthogonal} (i), Lemma ~\ref{sum_of_xi} (ii), Corollary~\ref{k0*_and_kd*}, and Definition~\ref{def:u0},
$$u_0e_0 = 2^d \bigg(\sum_{r=0}^d (k_r^*)^{-1}e_re_0^*e_r\bigg)e_0 = 2^de_0e_0^*e_0 = e_0\bigg(\sum_{i=0}^d x_i^*\bigg)e_0.$$
For $1 \leq i \leq d$, the triple $(0,i,0)$ does not satisfy Corollary~\ref{permissible_conditions} (i), thus $e_0x_i^*e_0 = 0$ by relation (4) of Definition~\ref{second_presentation_def}. Hence by relation (T1) of Definition~\ref{cTDef},
$$e_0\bigg(\sum_{i=0}^d x_i^*\bigg)e_0 = e_0x_0^*e_0 = e_0^2 = e_0.$$
Therefore $u_0e_0 = e_0.$

(ii) By Lemma~\ref{e_orthogonal} (i), Lemma~\ref{sum_of_xi} (ii), Corollary~\ref{k0*_and_kd*}, and Definition~\ref{def:u0},
$$u_0e_d = 2^d \bigg(\sum_{r=0}^d (k_r^*)^{-1}e_re_0^*e_r\bigg)e_d = 2^de_de_0^*e_d = e_d\bigg(\sum_{i=0}^d x_i^* \bigg)e_d.$$
For $1 \leq i \leq d$, the triple $(d,i,d)$ does not satisfy Corollary~\ref{permissible_conditions} (ii), thus $e_dx_i^*e_d = 0$ by relation (4) of Definition~\ref{second_presentation_def}. Hence by relation (T1) of Definition~\ref{cTDef},
$$e_d\bigg(\sum_{i=0}^d x_i^* \bigg)e_d = e_dx_0^*e_d = e_d^2 = e_d.$$
Therefore $u_0e_d = e_d.$

(iii) Similar to the proof of (i).

(iv) Similar to the proof of (ii).
\end{proof}

We finish this section with a comment about the case $d = 1$.

\begin{prop}\label{dEquals1}
For $d = 1$, the element $u_0 = 1.$ Moreover, the algebra $\cT_1$ is isomorphic to $M_2(\C)$.
\end{prop}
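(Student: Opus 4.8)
The plan is to show directly that $u_0 = 1$ in $\cT_1$, and then invoke Proposition~\ref{idempotentresults2}(iii) to conclude that $\cT_1 \cong \cT_1 u_0 = \cT_1 \cdot 1 \cong M_2(\C)$. Since $d = 1$, we have exactly two dual idempotents $e_0^*, e_1^*$ with $e_0^* + e_1^* = 1$ (Lemma~\ref{e_sum_1}), and similarly $e_0 + e_1 = 1$. By Corollary~\ref{k0*_and_kd*}, $k_0^* = \binom{1}{0} = 1$ and $k_1^* = \binom{1}{1} = 1$, so Definition~\ref{def:u0} gives
$$u_0 = 2\big(e_0 e_0^* e_0 + e_1 e_0^* e_1\big).$$

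The key step is to compute $e_0 e_0^* e_0$ and $e_1 e_0^* e_1$. Using Lemma~\ref{sum_of_xi}(ii) we have $e_0^* = 2^{-1}(x_0^* + x_1^*) = 2^{-1}(1 + x_1^*)$. For $d = 1$ the only nontrivial constraint comes from the triple product relations: the triples $(0,1,0)$ and $(1,1,1)$ both fail condition (ii) of Corollary~\ref{permissible_conditions} (indeed $(1,1,1)$ has $h+i+j = 3 > 2 = 2d$, and $(0,1,0)$ fails the triangle inequality), so by relation (4) of Definition~\ref{second_presentation_def} we get $e_0 x_1^* e_0 = 0$ and $e_1 x_1^* e_1 = 0$. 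Hence, just as in the proof of Lemma~\ref{u0e0}, $e_0 e_0^* e_0 = 2^{-1} e_0 (1 + x_1^*) e_0 = 2^{-1} e_0^2 = 2^{-1} e_0$, and similarly $e_1 e_0^* e_1 = 2^{-1} e_1$. Therefore $u_0 = 2\big(2^{-1} e_0 + 2^{-1} e_1\big) = e_0 + e_1 = 1$ by Lemma~\ref{e_sum_1}(i).

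Finally, since $u_0 = 1$, Proposition~\ref{idempotentresults2}(iii) immediately gives $\cT_1 = \cT_1 u_0 \cong M_2(\C)$; alternatively one can note that $1 - u_0 = 0$, so the ideal $\cT_1(1-u_0)$ is zero and Corollary~\ref{Td_iso_to_M_and_ideal} collapses to $\cT_1 \cong M_2(\C) \oplus 0 \cong M_2(\C)$. I do not anticipate a genuine obstacle here: the entire content is the short computation that the two "diagonal blocks" $e_0 e_0^* e_0$ and $e_1 e_0^* e_1$ each simplify via a single application of a triple product relation, which is already modeled closely on the argument in Lemma~\ref{u0e0}. The only point to be careful about is to use the $x_1^*$ side of the defining equality for $u_0$ (the right-hand side of \eqref{e:u0_equality}) so that the relevant vanishing relations are relation (4) of Definition~\ref{second_presentation_def}, exactly as in parts (i) and (ii) of Lemma~\ref{u0e0}.
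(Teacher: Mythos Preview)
Your proof is correct and follows essentially the same route as the paper. The only difference is packaging: the paper invokes Lemma~\ref{u0e0} directly to write $u_0 = u_0(e_0+e_1) = u_0e_0 + u_0e_1 = e_0 + e_1 = 1$, whereas you unpack the definition of $u_0$ and reproduce the Lemma~\ref{u0e0} computation inline for each summand; both then finish with Proposition~\ref{idempotentresults2}(iii). One small wording slip: you say both triples ``fail condition (ii)'' but then (correctly) note that $(0,1,0)$ actually fails the triangle inequality, i.e.\ condition (i).
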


\begin{proof}
Because $d=1$, Lemmas~\ref{e_sum_1} and~\ref{u0e0} imply that
$$u_0 = u_0(e_0+e_1) = e_0+e_1 = 1.$$
Therefore the algebra $\cT_1$ is isomorphic to $M_2(\C)$ by Proposition~\ref{idempotentresults2} part (iii).
\end{proof}


\section{The Main Result}
Recall the algebra homomorphism $\natural:\cT_d \to T_d$ from Lemma 3.4. In this section, we prove that $\natural$ is an algebra isomorphism.

Recall the free algebra $\fT_d$ from Definition~\ref{fTdDef}.

\begin{defn}\label{phiDef}
Assume $d \geq 2.$ Let $\varphi_d:\fT_d \to \fT_{d-2}$ be the algebra homomorphism that sends
\begin{align*}
    \fe_0 &\mapsto 0, &\fe_0^* &\mapsto 0, \\
    \fe_i &\mapsto \fe_{i-1},&\fe_i^* &\mapsto \fe_{i-1}^* \qquad \qquad (1 \leq i \leq d-1), \\
    \fe_d &\mapsto 0, &\fe_d^* &\mapsto 0.
\end{align*}
\end{defn}

Referring to Definition~\ref{phiDef} and using \eqref{fxi_less_0}, we see that $\varphi_d$ sends
\begin{equation} \fe_i \mapsto \fe_{i-1}, \qquad \qquad \fe_i^* \mapsto \fe_{i-1}^* \qquad \qquad (0 \leq i \leq d).\end{equation}

\begin{defn}\label{fQd_def}
Assume $d \geq 2.$ Let $\fQ_d$ denote the two-sided ideal of $\fT_d$ generated by $\fe_0,\fe_d,\fe_0^*,\fe_d^*.$
\end{defn}

\begin{lem}\label{phid_exists}
Assume $d \geq 2.$ The map $\varphi_d$ is surjective with kernel $\fQ_d.$
\end{lem}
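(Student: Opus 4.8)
The plan is to prove surjectivity directly and to establish the kernel claim by exhibiting a two-sided inverse to the algebra homomorphism that $\varphi_d$ induces on the quotient $\fT_d/\fQ_d$.

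First I would dispose of surjectivity. Since $\varphi_d$ is an algebra homomorphism, its image is a subalgebra of $\fT_{d-2}$; by Definition~\ref{phiDef} this image contains $\varphi_d(\fe_{i+1}) = \fe_i$ and $\varphi_d(\fe_{i+1}^*) = \fe_i^*$ for $0 \leq i \leq d-2$, that is, all the free generators of $\fT_{d-2}$. Hence $\varphi_d$ is onto. Next, the inclusion $\fQ_d \subseteq \ker\varphi_d$ is immediate: the four generators $\fe_0,\fe_d,\fe_0^*,\fe_d^*$ of $\fQ_d$ are each sent to $0$ by $\varphi_d$, and since $\varphi_d$ is an algebra homomorphism between unital algebras, the image of the two-sided ideal generated by a set $S$ lies in the two-sided ideal generated by $\varphi_d(S)$, which here is $\{0\}$.

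For the reverse inclusion $\ker\varphi_d \subseteq \fQ_d$, I would use that $\fQ_d \subseteq \ker\varphi_d$ to factor $\varphi_d$ through the quotient, obtaining an algebra homomorphism $\bar\varphi_d:\fT_d/\fQ_d \to \fT_{d-2}$; it then suffices to show $\bar\varphi_d$ is injective, since $\ker\bar\varphi_d = \ker\varphi_d/\fQ_d$. To prove injectivity I would construct an inverse: because $\fT_{d-2}$ is free on $\fe_0,\dots,\fe_{d-2},\fe_0^*,\dots,\fe_{d-2}^*$, there is an algebra homomorphism $\rho_d:\fT_{d-2} \to \fT_d/\fQ_d$ with $\fe_i \mapsto \fe_{i+1} + \fQ_d$ and $\fe_i^* \mapsto \fe_{i+1}^* + \fQ_d$ for $0 \leq i \leq d-2$. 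One checks on generators that $\bar\varphi_d \circ \rho_d = \mathrm{id}_{\fT_{d-2}}$, using $\varphi_d(\fe_{i+1}) = \fe_i$ and $\varphi_d(\fe_{i+1}^*) = \fe_i^*$ for $0 \leq i \leq d-2$; and that $\rho_d \circ \bar\varphi_d = \mathrm{id}_{\fT_d/\fQ_d}$, since the classes of $\fe_0,\fe_d,\fe_0^*,\fe_d^*$ are $0$ and are fixed by both composites, while for $1 \leq i \leq d-1$ one has $\fe_i + \fQ_d \mapsto \fe_{i-1} \mapsto \fe_i + \fQ_d$ and likewise for the starred generators. Thus $\bar\varphi_d$ and $\rho_d$ are mutually inverse isomorphisms; in particular $\bar\varphi_d$ is injective, so $\ker\varphi_d = \fQ_d$.

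The only genuinely delicate part is the reverse inclusion, and within it the one point requiring care is the index bookkeeping: one must verify that the shift $i \mapsto i+1$ defining $\rho_d$ lands in $\{1,\dots,d-1\}$, precisely the range on which $\varphi_d$ acts by $i \mapsto i-1$ rather than by sending the generator to $0$, and that $\fe_0,\fe_d,\fe_0^*,\fe_d^*$ are exactly the generators killed on both sides. Beyond this there is no real obstacle — freeness of $\fT_{d-2}$ does all the work.
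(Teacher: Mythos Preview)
Your proof is correct and follows precisely the natural route: the paper's own proof simply reads ``Routine consequence of Definitions~\ref{phiDef} and~\ref{fQd_def},'' and what you have written is exactly the routine argument one would supply, with the inverse homomorphism $\rho_d$ making the kernel computation clean. There is nothing to add.
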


\begin{proof}
Routine consequence of Definitions~\ref{phiDef} and~\ref{fQd_def}.
\end{proof}

\begin{lem}\label{phi_of_xi}
Assume $d \geq 2$. For $0 \leq i \leq d$, the following (i), (ii) hold.
\renumerate
    \item $\varphi_d(\fx_i) = \begin{cases}
    \fx_i & \text{if }i = 0 \text{ or } i = 1; \\
    \fx_i- \fx_{i-2} & \text{if } 2 \leq i \leq d-2; \\
    \frac{\Phi_{d-2}(\fx_1)}{(d-1)!} - \fx_{i-2} & \text{if } i = d-1; \\
    \frac{\fx_1\Phi_{d-2}(\fx_1)}{d!} - \fx_{i-2} & \text{if }i = d.
    \end{cases}$
    \item $\varphi_d(\fx_i^*) = \begin{cases}
    \fx_i^* & \text{if }i = 0 \text{ or }i = 1; \\
    \fx_i^*- \fx_{i-2}^* & \text{if } 2 \leq i \leq d-2; \\
    \frac{\Phi_{d-2}(\fx_1^*)}{(d-1)!} - \fx_{i-2}^* & \text{if } i = d-1; \\
    \frac{\fx_1^*\Phi_{d-2}(\fx_1^*)}{d!} - \fx_{i-2}^* & \text{if }i = d.
    \end{cases}$
\eenumerate
\end{lem}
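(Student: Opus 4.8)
The plan is to prove both parts by a direct computation, exploiting the fact that $\varphi_d$ is an algebra homomorphism and that each $\fx_i$ (resp. $\fx_i^*$) is a polynomial in $\fx_1$ (resp. $\fx_1^*$) given by $F_i$. Since (i) and (ii) are proved identically (just replace $\fe_i$ by $\fe_i^*$, $\theta_i$ by $\theta_i^*$, and note $\theta_i=\theta_i^*=d-2i$ by Lemma~\ref{eigenvalue_formula} and Corollary~\ref{dual_eigenvalue_formula}), I would carry out (i) in detail and remark that (ii) is similar. The key preliminary step is to compute $\varphi_d(\fx_1)$. By Definition~\ref{fTdDef}, $\fx_1 = \sum_{i=0}^d (d-2i)\fe_i$, and applying Definition~\ref{phiDef} together with the reindexing $\fe_i \mapsto \fe_{i-1}$ gives
\[
\varphi_d(\fx_1) = \sum_{i=1}^{d-1} (d-2i)\fe_{i-1} = \sum_{j=0}^{d-2} \big((d-2)-2j\big)\fe_j = \fx_1,
\]
where the middle equality substitutes $j = i-1$ and observes $d-2i = (d-2)-2(i-1)$, and the last equality is the definition of $\fx_1 \in \fT_{d-2}$. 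So $\varphi_d$ fixes $\fx_1$. Since $\varphi_d$ is an algebra homomorphism, it follows that $\varphi_d(P(\fx_1)) = P(\fx_1)$ for every polynomial $P \in \C[z]$; in particular $\varphi_d(\fx_i) = \varphi_d(F_i(\fx_1)) = F_i(\fx_1)$ for $0 \leq i \leq d$.

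Now the point is that the polynomial $F_i$ means something different inside $\fT_d$ versus inside $\fT_{d-2}$, because the recurrence in Definition~\ref{Fi_def} depends on $d$. Write $F_i = F_i^{(d)}$ to emphasize the ambient diameter. I would establish the polynomial identity
\[
F_i^{(d)} = F_i^{(d-2)} + F_{i-2}^{(d-2)} \qquad (0 \leq i \leq d-2),
\]
by induction on $i$ using the two recurrences $zF_i^{(d)} = (i+1)F_{i+1}^{(d)} + (d-i+1)F_{i-1}^{(d)}$ and $zF_i^{(d-2)} = (i+1)F_{i+1}^{(d-2)} + (d-2-i+1)F_{i-1}^{(d-2)}$, checking the base cases $i=0,1$ directly (for $i=1$: $F_1^{(d)} = z = F_1^{(d-2)} + F_{-1}^{(d-2)}$ since $F_{-1} = 0$ by convention) and carrying the induction through the boundary term carefully. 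Granting this, for $2 \leq i \leq d-2$ we get $\varphi_d(\fx_i) = F_i^{(d)}(\fx_1) = F_i^{(d-2)}(\fx_1) + F_{i-2}^{(d-2)}(\fx_1) = \fx_i - \fx_{i-2}$ in $\fT_{d-2}$ (note the sign convention in the statement: I would double-check whether the claimed formula reads $\fx_i - \fx_{i-2}$ or $\fx_i + \fx_{i-2}$ and reconcile the recurrence accordingly — the recurrence as written suggests the relation $F_i^{(d)} = F_i^{(d-2)} + F_{i-2}^{(d-2)}$, so the minus sign in the lemma statement indicates I may instead need $F_i^{(d)} = F_i^{(d-2)} - F_{i-2}^{(d-2)}$, and I must get the signs in the induction exactly right). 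For $i = d-1$ and $i = d$ the term $F_{i}^{(d-2)}$ no longer makes sense (the index exceeds the diameter $d-2$), so instead I would use the extension of the recurrence: from Definition~\ref{Fi_def} applied with diameter $d-2$, the recurrence $zF_{d-2}^{(d-2)} = (d-1)F_{d-1}^{(d-2)} + 3 F_{d-3}^{(d-2)}$ defines $F_{d-1}^{(d-2)}$, and by Lemma~\ref{Fd+1_equals_phi} we have $F_{d-1}^{(d-2)} = \Phi_{d-2}/(d-1)!$, which is exactly the term appearing in the lemma. Similarly $F_d^{(d-2)} = zF_{d-1}^{(d-2)}/d \cdot(\text{correction})$; more precisely, continuing the recurrence one more step and combining with $F_{d-1}^{(d-2)}(\fx_1^{(d-2)}) = \Phi_{d-2}(\fx_1)/(d-1)!$ yields $F_d^{(d-2)}(\fx_1) = \fx_1\Phi_{d-2}(\fx_1)/d!$, matching the $i=d$ case.

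The main obstacle I anticipate is the bookkeeping at the boundary indices $i = d-1, d$: the clean identity $F_i^{(d)} = F_i^{(d-2)} \pm F_{i-2}^{(d-2)}$ breaks down precisely because the recurrence for $F^{(d-2)}$ "runs out" at index $d-2$, and one has to carefully continue the $d-2$ recurrence past its natural range to define $F_{d-1}^{(d-2)}$ and $F_d^{(d-2)}$ and then recognize these as $\Phi_{d-2}/(d-1)!$ and $z\Phi_{d-2}/d!$ via Lemma~\ref{Fd+1_equals_phi} (applied with diameter $d-2$, so $\Phi_{d-2} = (d-1)! F_{d-1}^{(d-2)}$ — note the shift in what "$d+1$" means). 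Everything else is a routine induction on the three-term recurrences plus the observation that $\varphi_d$ being a homomorphism fixing $\fx_1$ commutes with polynomial substitution. Once the polynomial identities are nailed down, both (i) and (ii) follow immediately, with (ii) obtained from (i) by the self-duality $\theta_i = \theta_i^*$.
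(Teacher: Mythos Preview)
Your approach is correct and essentially the same as the paper's: both first verify $\varphi_d(\fx_1)=\fx_1$, then proceed by induction on the three-term recurrence of Definition~\ref{Fi_def}, and handle the boundary indices $i=d-1,\,d$ via Lemma~\ref{Fd+1_equals_phi} applied at diameter $d-2$. The paper carries out this induction directly inside $\fT_{d-2}$ by applying $\varphi_d$ to the recurrence and simplifying with the $\fT_{d-2}$ recurrence, whereas you isolate the equivalent polynomial identity $F_i^{(d)}=F_i^{(d-2)}-F_{i-2}^{(d-2)}$ in $\C[z]$ (the sign is indeed minus, and the identity persists for $i=d-1,d$ once one extends $F_d^{(d-2)}:=zF_{d-1}^{(d-2)}/d$); this is a clean repackaging rather than a genuinely different argument.
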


\begin{proof}
(i) We begin with a comment. Note that by Definitions~\ref{Fi_def} and~\ref{fTdDef}, the following holds in $\fT_d$:
\begin{equation}
    j\fx_j = {\fx_1\fx_{j-1}-(d-j+2)\fx_{j-2}} \qquad \qquad (2 \leq j \leq d).
    \label{fxi_in_fTd}
\end{equation}
We now consider the cases for $i$.

First, assume $i = 0$. The result holds, because $\fx_0 = 1$. Next, assume $i = 1$. Then by Lemma~\ref{eigenvalue_formula}, Definition~\ref{fTdDef}, and Definition~\ref{phiDef},
\begin{align*}
    \varphi_d(\fx_1) &= \varphi_d\Big(\sum_{j=0}^d (d-2j)\fe_j\Big) \\
    &= \sum_{j=1}^{d-1}(d-2j)\fe_{j-1} \\
    &= \sum_{j=0}^{d-2}(d-2-2j)\fe_j \\
    &= \fx_1.
\end{align*}

By \eqref{fxi_less_0}, it is correct to say that $\varphi_d$ sends $\fx_i \mapsto \fx_i -\fx_{i-2}$ for $i = 0$ and $i = 1$. This allows us to use induction for $2 \leq i \leq d-2$.  We proceed by induction on $i$.

Assume $2 \leq i \leq d-2$. Setting $j = i$ in \eqref{fxi_in_fTd}, applying $\varphi_d$ to both sides, using induction, and dividing by $i$, we obtain
\begin{equation}
\varphi_d(\fx_i) = \frac{\fx_1(\fx_{i-1} - \fx_{i-3}) - (d-i+2)(\fx_{i-2} - \fx_{i-4})}{i}.
\label{phid_of_fxi}
\end{equation}
Using Definitions~\ref{Fi_def} and~\ref{fTdDef}, we find that in $\fT_{d-2}$,
\begin{equation}
    \fx_1\fx_{i-1} = i\fx_i + (d-i)\fx_{i-2}, \qquad \qquad \fx_1\fx_{i-3} = (i-2)\fx_{i-2} +(d-i+2)\fx_{i-4}.
    \label{fxi_and_fxi-2}
\end{equation}
In equation \eqref{phid_of_fxi}, we distribute terms in the numerator, then eliminate $\fx_1\fx_{i-1}$ and $\fx_1\fx_{i-3}$ via \eqref{fxi_and_fxi-2}. This yields $\varphi_d(\fx_i) = \fx_i - \fx_{i-2}.$

Next, assume $i = d-1$. Setting $j = d-1$ in \eqref{fxi_in_fTd}, applying $\varphi_d$ to the result, using induction, and dividing by $d-1$ yields
\begin{equation}
\varphi_d(\fx_{d-1}) = \frac{\fx_1(\fx_{d-2} - \fx_{d-4}) - 3(\fx_{d-3} - \fx_{d-5})}{d-1}.
\label{phid_of_fxd-1}
\end{equation}
Using Definitions~\ref{Fi_def} and~\ref{fTdDef}, we find that in $\fT_{d-2},$
\begin{equation}
    \fx_1\fx_{d-4} = (d-3)\fx_{d-3} + 3\fx_{d-5}.
    \label{fxd-1}
\end{equation}
In \eqref{phid_of_fxd-1}, we distribute terms in the numerator and eliminate $\fx_1\fx_{d-4}$ via \eqref{fxd-1}. This yields
\begin{equation}
    \varphi_d(\fx_{d-1}) = \frac{\fx_1\fx_{d-2} - \fx_{d-3}}{d-1}-\fx_{d-3}.
    \label{e:phid_of_xd-1}
\end{equation}
By Lemma~\ref{Fd+1_equals_phi},
\begin{equation}
    \fx_1\fx_{d-2} - \fx_{d-3} = \frac{\Phi_{d-2}(\fx_1)}{(d-2)!}.
    \label{e:x1xd-2}
\end{equation}
We use \eqref{e:x1xd-2} to eliminate the numerator in the right-hand side of \eqref{e:phid_of_xd-1}. This yields
$$\varphi_d(\fx_{d-1}) = \frac{\Phi_{d-2}(\fx_1)}{(d-1)!} - \fx_{d-3}.$$

For the rest of this proof, assume $i = d$. Setting $j=d$ in \eqref{fxi_in_fTd}, applying $\varphi_d$ to the result, using induction, and dividing by $d$ yields
\begin{align}
    \varphi_d(\fx_d) &= \frac{\fx_1\big(\frac{\Phi_{d-2}(\fx_1)}{(d-1)!} - \fx_{d-3}\big) - 2(\fx_{d-2}-\fx_{d-4})}{d}.
    \label{phid_of_fxd}
\end{align}
Using Definitions~\ref{Fi_def} and~\ref{fTdDef}, we find that in $\fT_{d-2},$
\begin{equation}
    \fx_1\fx_{d-3} = (d-2)\fx_{d-2} + 2\fx_{d-4}.
    \label{fxd}
\end{equation}
In \eqref{phid_of_fxd}, we distribute terms in the numerator and eliminate $\fx_1\fx_{d-3}$ via \eqref{fxd}. This yields
\begin{align*}
    \varphi_d(\fx_d) &=  \frac{\frac{\fx_1\Phi_{d-2}(\fx_1)}{(d-1)!} - d\fx_{d-2}}{d} \\
    &= \frac{\fx_1\Phi_{d-2}(\fx_1)}{d!} - \fx_{d-2}.
\end{align*}

(ii) Similar to the proof of (i).
\end{proof}

Recall the ideal $\fS_d \subseteq \fT_d$ from Definition~\ref{fSd_def}. Our next general goal is to show that $\varphi_d(\fS_d) = \fS_{d-2}$. To do that, we will show that $\varphi_d(\fS_d) \subseteq \fS_{d-2}$ and $\fS_{d-2} \subseteq \varphi_d(\fS_d)$.

\begin{lem}\label{phi_of_esum1}
For $d \geq 2,$ the following $(i),(ii)$ hold.
\renumerate
    \item $\varphi_d\bigg(\sum_{i=0}^d \fe_i -1\bigg) = \sum_{i=0}^{d-2}\fe_i -1$.
    \item $\varphi_d\bigg(\sum_{i=0}^d \fe_i^* -1\bigg) = \sum_{i=0}^{d-2}\fe_i^* -1$.
\eenumerate
\end{lem}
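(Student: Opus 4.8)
The statement to prove is Lemma~\ref{phi_of_esum1}, which computes the image under $\varphi_d$ of the first two ideal generators of $\fS_d$ listed in Definition~\ref{fSd_def}.

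\medskip

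The plan is to compute $\varphi_d\bigl(\sum_{i=0}^d \fe_i - 1\bigr)$ directly from the definition of $\varphi_d$ in Definition~\ref{phiDef}. First I would expand $\sum_{i=0}^d \fe_i = \fe_0 + \sum_{i=1}^{d-1}\fe_i + \fe_d$, separating out the two extremal terms that $\varphi_d$ kills. Applying $\varphi_d$ and using that $\varphi_d(\fe_0) = 0$, $\varphi_d(\fe_d) = 0$, and $\varphi_d(\fe_i) = \fe_{i-1}$ for $1 \leq i \leq d-1$, the sum becomes $\sum_{i=1}^{d-1}\fe_{i-1} = \sum_{i=0}^{d-2}\fe_i$. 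Since $\varphi_d$ is an algebra homomorphism, $\varphi_d(1) = 1$, so $\varphi_d\bigl(\sum_{i=0}^d \fe_i - 1\bigr) = \sum_{i=0}^{d-2}\fe_i - 1$, as claimed. Part (ii) is identical with $\fe_i$ replaced by $\fe_i^*$ throughout, using the starred part of Definition~\ref{phiDef}.

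\medskip

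This lemma is essentially a one-line bookkeeping computation, so there is no real obstacle; the only point requiring a moment's care is noting that $\varphi_d$, being an algebra homomorphism between unital algebras, sends $1$ to $1$, and that the reindexing of the sum $\sum_{i=1}^{d-1}\fe_{i-1}$ shifts the range to $0 \leq i \leq d-2$ exactly. I would write the proof of (i) explicitly and then say part (ii) follows by the same argument applied to the starred generators.
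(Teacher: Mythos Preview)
Your proposal is correct and matches the paper's proof essentially line for line: the paper also just applies Definition~\ref{phiDef} to get $\varphi_d\bigl(\sum_{i=0}^d \fe_i - 1\bigr) = \sum_{i=1}^{d-1}\fe_{i-1} - 1 = \sum_{i=0}^{d-2}\fe_i - 1$, and then says (ii) is similar. Your remark that $\varphi_d(1)=1$ because $\varphi_d$ is a unital algebra homomorphism is the only detail you make explicit that the paper leaves tacit.
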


\begin{proof}
(i) By Definition~\ref{phiDef},
\begin{equation*}
    \varphi_d\bigg(\sum_{i=0}^d \fe_i - 1\bigg) = \sum_{i=1}^{d-1}\fe_{i-1} -1 = \sum_{i=0}^{d-2}\fe_i -1.
    \label{element_(1)}
\end{equation*}

(ii) Similar to the proof of (i).
\end{proof}

\begin{lem}\label{hij_in_Pd-2}
Assume $d \geq 2$. For $0 \leq h,i,j \leq d$ such that $(h,i,j) \notin P_d$, the following (i), (ii) hold.
\renumerate
    \item $(h-1,i,j-1) \notin P_{d-2}$.
    \item $(h-1,i-2,j-1) \notin P_{d-2}$.
\eenumerate
\end{lem}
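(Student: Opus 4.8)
The plan is to prove the contrapositive of each statement. For (i), I would show that if $(h-1,i,j-1)\in P_{d-2}$ then $(h,i,j)\in P_d$, and for (ii) that if $(h-1,i-2,j-1)\in P_{d-2}$ then $(h,i,j)\in P_d$. Note that in either case, membership of the shifted triple in $P_{d-2}$ already forces $0\le h,i,j\le d$, so the standing hypothesis of the lemma is automatic in the contrapositive. Recall from Definition~\ref{Pr_def} and Corollary~\ref{permissible_conditions} that a triple $(a,b,c)$ lies in $P_m$ precisely when $0\le a,b,c\le m$, the numbers $a,b,c$ satisfy the triangle inequality, $a+b+c\le 2m$, and $a+b+c$ is even.

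For part (i), assume $(h-1,i,j-1)\in P_{d-2}$ and verify the four defining conditions for $(h,i,j)\in P_d$ one by one. The range condition is immediate, since $1\le h\le d-1$, $0\le i\le d-2$, and $1\le j\le d-1$ in particular give $0\le h,i,j\le d$. The parity condition holds because $h+i+j$ differs from $(h-1)+i+(j-1)$ by $2$. The sum condition holds because $h+i+j=\big((h-1)+i+(j-1)\big)+2\le 2(d-2)+2=2d-2\le 2d$. Finally, the inequalities $h\le i+j$ and $j\le h+i$ follow tightly from $h-1\le i+(j-1)$ and $j-1\le(h-1)+i$ respectively, while $i\le h+j$ follows with two units of slack from $i\le(h-1)+(j-1)$.

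Part (ii) is the same bookkeeping with the extra substitution $i\mapsto i-2$: assuming $(h-1,i-2,j-1)\in P_{d-2}$, the range condition follows from $1\le h\le d-1$, $2\le i\le d$, $1\le j\le d-1$; the parity of $h+i+j$ agrees with that of $(h-1)+(i-2)+(j-1)$; the sum satisfies $h+i+j=\big((h-1)+(i-2)+(j-1)\big)+4\le 2(d-2)+4=2d$; and the triangle inequalities for $(h,i,j)$ follow, now with slack to spare, from those for $(h-1,i-2,j-1)$. There is no real obstacle here: the lemma is elementary manipulation of linear inequalities and parities. The only points needing care are confirming that the range bounds for the shrunken triple in $P_{d-2}$ genuinely imply $0\le h,i,j\le d$, and keeping track of which triangle inequalities stay tight under the shifts so that none is lost; since every step has slack, the translation goes through cleanly. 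A direct case analysis — splitting on which of conditions (i)--(iii) of Corollary~\ref{permissible_conditions} fails for $(h,i,j)$, together with the out-of-range possibilities for the shifted triples — would also work, but the contrapositive is shorter.
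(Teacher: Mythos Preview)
Your proof is correct. It proceeds by the contrapositive, whereas the paper argues directly: it splits into cases according to which of the conditions (i)--(iii) of Corollary~\ref{permissible_conditions} fails for $(h,i,j)$ (sum exceeds $2d$; sum is odd; triangle inequality fails, itself split into $i>h+j$ and $i<|h-j|$), and in each case checks that the corresponding condition also fails for the shifted triples. Your contrapositive packs all of this into a single uniform verification of the four defining conditions, which is shorter and avoids case branching; the paper's version, on the other hand, makes explicit exactly which condition of $P_{d-2}$ is violated in each case, information that is not needed for the lemma but might aid a reader tracking how the relations degrade under the shift.
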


\begin{proof}
We consider the three cases in Corollary~\ref{permissible_conditions}. For convenience, we consider them in the order (ii), (iii), (i).

First, assume $h+i+j > 2d$. Then
\begin{align*}
    (h-1) + i + (j-1) &> 2(d-2), \\
    (h-1) + (i-2) + (j-1) &> 2(d-2).
\end{align*}
Thus (i) and (ii) hold.

Next, assume $h+i+j$ is odd. Then $(h-1)+i+(j-1)$ is odd and $(h-1)+(i-2)+(j-1)$ is odd. Thus (i) and (ii) hold.

For the rest of this proof, assume $h,i,j$ fail the triangle inequality. This leaves two subcases:
$$i > h+j, \qquad \qquad i < |h-j|.$$
First, assume $i > h+j$. Then
\begin{align*}
i &> (h-1) + (j-1), \\
i-2 &> (h-1) + (j-1).
\end{align*}
Hence $h-1,i,j-1$ and $h-1,i-2,j-1$ fail the triangle inequality. Thus (i) and (ii) hold.

Lastly, assume $i < |h-j|$. Then
\begin{align*}
i &< |(h-1)-(j-1)|, \\
i-2 &< |(h-1)-(j-1)|.
\end{align*}
Hence $h-1,i,j-1$ and $h-1,i-2,j-1$ fail the triangle inequality. Thus (i) and (ii) hold.

\end{proof}

\begin{lem}\label{phi_of_triple}
Assume $d \geq 2.$ For $0 \leq h,i,j \leq d$ such that $(h,i,j) \notin P_d$, the following $(i),(ii)$ hold.
\renumerate
    \item $\varphi_d(\fe_h^*\fx_i\fe_j^*) \in \fS_{d-2}$.
    \item $\varphi_d(\fe_h\fx_i^*\fe_j) \in \fS_{d-2}$.
\eenumerate
\end{lem}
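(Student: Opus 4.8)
The plan is to prove (i) and (ii) by cases on the value of $i$, using the explicit formulas for $\varphi_d(\fx_i)$ from Lemma~\ref{phi_of_xi}. The governing idea is that $\varphi_d$ is an algebra homomorphism, so $\varphi_d(\fe_h^*\fx_i\fe_j^*) = \varphi_d(\fe_h^*)\varphi_d(\fx_i)\varphi_d(\fe_j^*) = \fe_{h-1}^*\,\varphi_d(\fx_i)\,\fe_{j-1}^*$ (using the remark after Definition~\ref{phiDef} that $\varphi_d(\fe_i^*) = \fe_{i-1}^*$, valid for all $0 \le i \le d$ via \eqref{fxi_less_0}). So the task reduces to showing that $\fe_{h-1}^*\,\varphi_d(\fx_i)\,\fe_{j-1}^*$ lies in $\fS_{d-2}$ whenever $(h,i,j) \notin P_d$.

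For the first three cases of Lemma~\ref{phi_of_xi}(i), namely $i \in \{0,1\}$ and $2 \le i \le d-2$, we have $\varphi_d(\fx_i) = \fx_i - \fx_{i-2}$ (writing $\fx_{-1} = \fx_{-2} = 0$ so this covers $i=0,1$ as well). Thus $\fe_{h-1}^*\,\varphi_d(\fx_i)\,\fe_{j-1}^* = \fe_{h-1}^*\fx_i\fe_{j-1}^* - \fe_{h-1}^*\fx_{i-2}\fe_{j-1}^*$. By Lemma~\ref{hij_in_Pd-2}(i), $(h-1,i,j-1) \notin P_{d-2}$, and by Lemma~\ref{hij_in_Pd-2}(ii), $(h-1,i-2,j-1) \notin P_{d-2}$; hence both $\fe_{h-1}^*\fx_i\fe_{j-1}^*$ and $\fe_{h-1}^*\fx_{i-2}\fe_{j-1}^*$ are among the generators (3) of $\fS_{d-2}$ listed in Definition~\ref{fSd_def}, so their difference lies in $\fS_{d-2}$. (One should note the indices $h-1$, $j-1$, $i-2$ may be negative or exceed $d-2$; in those cases the corresponding $\fx$ or $\fe$ vanishes by \eqref{fxi_less_0}, so the term is trivially in $\fS_{d-2}$, and Lemma~\ref{hij_in_Pd-2} is only invoked when all indices are in range.)

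For the remaining two cases, $i = d-1$ and $i = d$, the formula in Lemma~\ref{phi_of_xi}(i) introduces a factor of $\Phi_{d-2}(\fx_1)$: we get $\varphi_d(\fx_{d-1}) = \frac{1}{(d-1)!}\Phi_{d-2}(\fx_1) - \fx_{d-3}$ and $\varphi_d(\fx_d) = \frac{1}{d!}\fx_1\Phi_{d-2}(\fx_1) - \fx_{d-2}$. The terms $\fe_{h-1}^*\fx_{d-3}\fe_{j-1}^*$ and $\fe_{h-1}^*\fx_{d-2}\fe_{j-1}^*$ are handled exactly as above via Lemma~\ref{hij_in_Pd-2}(ii) (with $i-2 = d-3$ or $d-2$). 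For the $\Phi_{d-2}(\fx_1)$ terms, the key observation is that $\Phi_{d-2}(\fx_1) \in \fS_{d-2}$ already: indeed, Lemma~\ref{x1_minimal_poly_in_prime} asserts $\Phi_d(x_1) = 0$ in $\cT_d'$, hence (applied with $d$ replaced by $d-2$, and transported through $\psi_{d-2}$ whose kernel is $\fS_{d-2}$ by Remark~\ref{r:psid_exists}) we have $\psi_{d-2}(\Phi_{d-2}(\fx_1)) = \Phi_{d-2}(x_1) = 0$, so $\Phi_{d-2}(\fx_1) \in \fS_{d-2}$; similarly $\fx_1\Phi_{d-2}(\fx_1) \in \fS_{d-2}$ since $\fS_{d-2}$ is a two-sided ideal. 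Then $\fe_{h-1}^*\cdot\Phi_{d-2}(\fx_1)\cdot\fe_{j-1}^*$ and $\fe_{h-1}^*\cdot\fx_1\Phi_{d-2}(\fx_1)\cdot\fe_{j-1}^*$ lie in $\fS_{d-2}$ because it is a two-sided ideal. Combining, $\varphi_d(\fe_h^*\fx_i\fe_j^*) \in \fS_{d-2}$ in all cases. Part (ii) is proved identically, using Lemma~\ref{phi_of_xi}(ii), Lemma~\ref{hij_in_Pd-2} (the same statements apply, since $P_{d-2}$ is symmetric in its three arguments and Lemma~\ref{hij_in_Pd-2} does not distinguish starred from unstarred), Lemma~\ref{x1_minimal_poly_in_prime}(ii), and generators (4) of $\fS_{d-2}$.

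The main obstacle is bookkeeping rather than conceptual: one must carefully track which of $h-1, i-2, j-1$ (and $i$ itself) fall outside $\{0,\dots,d-2\}$ so that the appropriate terms vanish by \eqref{fxi_less_0}, and one must confirm that the exceptional formulas in the $i = d-1, d$ cases indeed only contribute a polynomial-in-$\fx_1$ piece that is visibly in $\fS_{d-2}$ plus an $\fx_{i-2}$ piece covered by Lemma~\ref{hij_in_Pd-2}(ii). Everything else is a direct application of the homomorphism property of $\varphi_d$, the two-sided ideal property of $\fS_{d-2}$, and the two preceding lemmas.
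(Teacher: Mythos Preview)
Your proposal is correct and follows essentially the same approach as the paper's proof: both apply $\varphi_d$ as a homomorphism, invoke Lemma~\ref{phi_of_xi} case by case, use Lemma~\ref{hij_in_Pd-2} to place the resulting triple products among the generators of $\fS_{d-2}$, and handle the $i=d-1,d$ cases by noting $\Phi_{d-2}(\fx_1)\in\fS_{d-2}$ via Lemma~\ref{x1_minimal_poly_in_prime} and Remark~\ref{r:psid_exists}. The only cosmetic difference is that the paper disposes of $h\in\{0,d\}$ or $j\in\{0,d\}$ up front (where $\varphi_d(\fe_h^*)=0$ or $\varphi_d(\fe_j^*)=0$), whereas you fold this into the convention \eqref{fxi_less_0}; both are fine.
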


\begin{proof}
(i) First note that if $h = 0$ or $h = d$ or $j = 0$ or $j = d$, then $\varphi_d(\fe_h^*\fx_i\fe_j^*) = 0$ by Definition~\ref{phiDef}. Thus for the remainder of this proof, we assume $1 \leq h,j \leq d-1$.

We consider the cases from Lemma~\ref{phi_of_xi} (i).

First, assume $i = 0$ or $i = 1$. By Definition~\ref{phiDef} and Lemma~\ref{phi_of_xi},
$$\varphi_d(\fe_h^*\fx_i\fe_j^*) = \fe_{h-1}^*\fx_i\fe_{j-1}^*.$$
By Lemma~\ref{hij_in_Pd-2}, $(h-1,i,j-1) \notin P_{d-2}$. Hence by Definition~\ref{fSd_def},
$$\fe_{h-1}^*\fx_i\fe_{j-1}^* \in \fS_{d-2}.$$

Next, assume $2 \leq i \leq d-2$. Then
$$\varphi_d(\fe_h^*\fx_i\fe_j^*) = \fe_{h-1}^*\fx_i\fe_{j-1}^* - \fe_{h-1}^*\fx_{i-2}\fe_{j-1}^*.$$
By Lemma~\ref{hij_in_Pd-2}, $(h-1,i,j-1) \notin P_{d-2}$ and $(h-1,i-2,j-1) \notin P_{d-2}$. Hence by Definition~\ref{fSd_def},
$$\fe_{h-1}^*\fx_i\fe_{j-1}^* - \fe_{h-1}^*\fx_{i-2}\fe_{j-1}^* \in \fS_{d-2}.$$

Next, assume $i = d-1$. Then
$$\varphi_d(\fe_h^*\fx_{i}\fe_j^*) = \frac{\fe_{h-1}^*\Phi_{d-2}(\fx_1)\fe_{j-1}^*}{(d-1)!} - \fe_{h-1}^*\fx_{i-2}\fe_{j-1}^*.$$
By Lemma~\ref{x1_minimal_poly_in_prime}, $\Phi_{d-2}(\fx_1) \in \fS_{d-2}$. By Definition~\ref{fSd_def} and Lemma~\ref{hij_in_Pd-2}, $\fe_{h-1}^*\fx_{i-2}\fe_{j-1}^*\in \fS_{d-2}$. Thus
$$\frac{\fe_{h-1}^*\Phi_{d-2}(\fx_1)\fe_{j-1}^*}{(d-1)!} - \fe_{h-1}^*\fx_{i-2}\fe_{j-1}^* \in \fS_{d-2}.$$

For the remainder of this proof, assume $i = d$. Then
$$\varphi_d(\fe_h^*\fx_i\fe_j*) = \frac{e_{h-1}^*\fx_1\Phi_{d-2}(\fx_1)\fe_{j-1}^*}{d!} - \fe_{h-1}^*\fx_{i-2}\fe_{j-1}^*.$$
By Lemma~\ref{x1_minimal_poly_in_prime}, $\Phi_{d-2}(\fx_1) \in \fS_{d-2}$. By Definition~\ref{fSd_def} and Lemma~\ref{hij_in_Pd-2}, $\fe_{h-1}^*\fx_{i-2}\fe_{j-1}^* \in \fS_{d-2}$. Thus
$$\frac{e_{h-1}^*\fx_1\Phi_{d-2}(\fx_1)\fe_{j-1}^*}{d!} - \fe_{h-1}^*\fx_{i-2}\fe_{j-1}^* \in \fS_{d-2}.$$

(ii) Similar to the proof of (i).
\end{proof}

We have now shown that $\varphi_d(\fS_d) \subseteq \fS_{d-2}$. Next we show that $\fS_{d-2} \subseteq \varphi_d(\fS_d)$. To that end, we include the following technical results.

\begin{lem}\label{phi_of_minpol}
Assume $d \geq 2$. The following (i), (ii) hold.
\renumerate
    \item $\Phi_{d-2}(\fx_1)(1-\fe_0-\fe_d) \in \fS_d$.
    \item $\Phi_{d-2}(\fx_1^*)(1-\fe_0^*-\fe_d^*) \in \fS_d$.
\eenumerate
\end{lem}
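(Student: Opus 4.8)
The plan is to prove (i) and note that (ii) follows by the analogous (dual) argument. The goal is to show that $\Phi_{d-2}(\fx_1)(1-\fe_0-\fe_d)$ lies in the ideal $\fS_d$ of $\fT_d$. The key idea is to recognize $\Phi_{d-2}(\fx_1)$, up to a scalar, as a product $\fx_1\fx_{d-2}-\fx_{d-3}$ type combination of the $\fx_i$'s, and then to use the relations generating $\fS_d$ to push the $\fx_i$'s appearing there against $1-\fe_0-\fe_d = \sum_{i=1}^{d-1}\fe_i$ and collapse them. Concretely, by Lemma~\ref{Fd+1_equals_phi} applied with $d$ replaced by $d-2$, we have $\Phi_{d-2} = (d-1)!\,F_{d-1}$, so $\Phi_{d-2}(\fx_1) = (d-1)!\,F_{d-1}(\fx_1) = (d-1)!\,\fx_{d-1}$ where here $\fx_{d-1}$ is computed using the polynomial $F_{d-1}$ with the parameter $d$ (not $d-2$); equivalently, using the recurrence \eqref{fxi_in_fTd} inside $\fT_d$, one can write $\Phi_{d-2}(\fx_1)$ as a fixed $\C$-linear combination of $\fx_0,\dots,\fx_{d-1}$ (or even express it via $\fx_1\fx_{d-2}-\fx_{d-3}$ as in \eqref{e:x1xd-2}).

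First I would establish a bridge identity: in $\fT_d$, modulo $\fS_d$, we have $\fe_i(1-\fe_0-\fe_d) = \fe_i$ for $1\le i\le d-1$ and $\fe_0(1-\fe_0-\fe_d)\equiv 0$, $\fe_d(1-\fe_0-\fe_d)\equiv 0$ once we know the $\fe_i$ are orthogonal idempotents summing to $1$ modulo $\fS_d$ — but we must be careful, since in the \emph{free} algebra $\fT_d$ these relations only hold modulo $\fS_d$, which is exactly what we are trying to exploit. So the cleaner route: work with $1-\fe_0-\fe_d$ directly. Write $\Phi_{d-2}(\fx_1) = c\,\fx_{d-1}'$ where $\fx_{d-1}' := \fx_1\fx_{d-2}-\fx_{d-3}$ and $c = (d-2)!$ by \eqref{e:x1xd-2} (reading that display in $\fT_d$ rather than $\fT_{d-2}$; one checks the same computation is valid since it only uses Definitions~\ref{Fi_def} and~\ref{fTdDef}). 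Then I would expand $\fx_{d-1}'(1-\fe_0-\fe_d)$ and show each resulting term is in $\fS_d$. Since $\fx_1\fx_{d-2}-\fx_{d-3}$ is the ``$\fx_{d-1}$-like'' element, and the triple $(h,d-1,j)$ or the triple forcing index $d-1$ against $\fe_0$ or $\fe_d$ can be checked to violate the conditions of Corollary~\ref{permissible_conditions} — specifically $\fe_0\fx_{d-1}\fe_0$, $\fe_d\fx_{d-1}\fe_d$, and more importantly the relevant $\fe_h\fx_i^*\fe_j$ / $\fe_h^*\fx_i\fe_j^*$ relations — one gets the needed membership.

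Actually, the most robust approach avoids delicate case analysis: multiply on the left and right by the full sum $\sum_{h=0}^d \fe_h$ and $\sum_{j=0}^d \fe_j$, which equals $1$ modulo $\fS_d$ by Definition~\ref{fSd_def}(1). So modulo $\fS_d$, $\Phi_{d-2}(\fx_1) = \sum_{h,j}\fe_h\,\Phi_{d-2}(\fx_1)\,\fe_j$. Now $\Phi_{d-2}(\fx_1)$ is a polynomial in $\fx_1$, hence a $\C$-combination of $\fx_0,\dots,\fx_{d-1}$, and for each fixed $h,j$ the term $\fe_h\fx_i\fe_j$... — wait, these are $\fx_i$, not $\fx_i^*$, so the relevant relations are Definition~\ref{fSd_def}(3) which constrain $\fe_h^*\fx_i\fe_j^*$, not $\fe_h\fx_i\fe_j$. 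So instead, the clean statement is: since $\fx_1$ is a $\C$-combination of the $\fe_i$ (by \eqref{e:x1_is_sum} read in $\fT_d$ — but again that only holds mod $\fS_d$), we get $\Phi_{d-2}(\fx_1)\equiv\sum_{i}\Phi_{d-2}(d-2i)\,\fe_i$ modulo $\fS_d$ \emph{provided} the $\fe_i$ are orthogonal idempotents mod $\fS_d$; this orthogonality holds mod $\fS_d$ because $\fS_d = \ker\psi_d$ and in $\cT_d$ the $e_i$ are orthogonal idempotents by Lemma~\ref{e_idempotent_in_prime} together with Lemma~\ref{e_sum_1}. But $\Phi_{d-2}(d-2i) = \prod_{k=0}^{d-2}(d-2i-(d-2-2k)) = \prod_{k=0}^{d-2}(2k-2i+2)$, which vanishes for every $i$ with $1\le i\le d-1$ (take $k=i-1$, giving factor $0$, when $0\le i-1\le d-2$) and is nonzero precisely for $i=0$ and $i=d$. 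Therefore $\Phi_{d-2}(\fx_1)\equiv \Phi_{d-2}(d)\,\fe_0 + \Phi_{d-2}(-d)\,\fe_d \pmod{\fS_d}$, i.e. $\Phi_{d-2}(\fx_1)$ is a $\C$-combination of $\fe_0$ and $\fe_d$ modulo $\fS_d$. Finally, $\fe_0(1-\fe_0-\fe_d)$ and $\fe_d(1-\fe_0-\fe_d)$ are both in $\fS_d$: mapping under $\psi_d$ to $\cT_d$, we get $e_0(1-e_0-e_d) = e_0 - e_0 - 0 = 0$ and $e_d(1-e_0-e_d)=0$ by the orthogonality of the idempotents, so each lies in $\ker\psi_d = \fS_d$. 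Hence $\Phi_{d-2}(\fx_1)(1-\fe_0-\fe_d)\in\fS_d$, proving (i); part (ii) is the dual argument with stars.

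The main obstacle is bookkeeping about whether a given relation holds \emph{in} $\fT_d$ or only \emph{modulo} $\fS_d$: the free algebra $\fT_d$ has none of the Terwilliger relations built in, so every use of idempotency, orthogonality, or the expansion $\fx_1 = \sum(d-2i)\fe_i$ must be justified as an equality in $\cT_d$ pulled back to ``$\in\fS_d$'' via $\fS_d=\ker\psi_d$ (Remark~\ref{r:psid_exists}). The cleanest write-up keeps everything on the $\cT_d$ side: it suffices to show $\psi_d\big(\Phi_{d-2}(\fx_1)(1-\fe_0-\fe_d)\big) = 0$ in $\cT_d$, i.e. $\Phi_{d-2}(x_1)(1-e_0-e_d) = 0$ in $\cT_d$; and for that one uses $x_1 = \sum_i(d-2i)e_i$ (this is \eqref{e:x1_is_sum}, valid in $\cT_d=\cT_d'$) together with Lemma~\ref{e_idempotent_in_prime} to get $\Phi_{d-2}(x_1) = \sum_i\Phi_{d-2}(d-2i)e_i = \Phi_{d-2}(d)e_0+\Phi_{d-2}(-d)e_d$, and then $(\Phi_{d-2}(d)e_0+\Phi_{d-2}(-d)e_d)(1-e_0-e_d) = 0$ again by Lemma~\ref{e_idempotent_in_prime}. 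I would present it in that order: (a) reduce to an identity in $\cT_d$ via $\fS_d=\ker\psi_d$; (b) expand $\Phi_{d-2}(x_1)$ using \eqref{e:x1_is_sum} and idempotent orthogonality; (c) observe $\Phi_{d-2}(d-2i)=0$ for $1\le i\le d-1$; (d) finish with a one-line idempotent computation.
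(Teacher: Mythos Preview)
Your final approach (the one you summarize in steps (a)--(d)) is correct and is essentially the paper's argument, just organized on the $\cT_d$ side rather than the $\fT_d$ side. The paper decomposes $1-\fe_0-\fe_d = (1-\sum_i\fe_i) + \sum_{i=1}^{d-1}\fe_i$ in $\fT_d$, uses generator~(1) of $\fS_d$ for the first piece, and for the second piece observes that $\Phi_{d-2}(\fx_1) = \prod_{i=1}^{d-1}(\fx_1-\theta_i)$ together with $(\fx_1-\theta_i)\fe_i\in\fS_d$ (via \eqref{e:x1_minus_theta} and $\fS_d=\ker\psi_d$); you instead push everything through $\psi_d$ at once and compute $\Phi_{d-2}(x_1)(1-e_0-e_d)=0$ in $\cT_d$ directly using the spectral expansion $\Phi_{d-2}(x_1)=\sum_i\Phi_{d-2}(\theta_i)e_i$ and $\Phi_{d-2}(\theta_i)=0$ for $1\le i\le d-1$. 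Same key facts, same reliance on Remark~\ref{r:psid_exists}, slightly different packaging. The earlier paragraphs of your proposal (attempting to use the triple-product generators~(3) of $\fS_d$ on products like $\fe_h\fx_i\fe_j$) were indeed dead ends, as you noticed; drop them and present only the clean (a)--(d) argument.
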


\begin{proof}
(i) Observe that
\begin{equation}\Phi_{d-2}(\fx_1)(1-\fe_0-\fe_d) = \Phi_{d-2}(\fx_1)\bigg(1 - \sum_{i=0}^d \fe_i\bigg) + \Phi_{d-2}(\fx_1)\sum_{i=1}^{d-1}\fe_i. \label{phi_difference_sums} \end{equation}
By Definition~\ref{fSd_def}, $1-\sum_{i=0}^d \fe_i \in \fS_d$. Hence
\begin{equation}
    \Phi_{d-2}(\fx_1)\bigg(1 - \sum_{i=0}^d \fe_i\bigg) \in \fS_d.
    \label{e:phid-2_big_sum_in_fSd}
\end{equation}
By Lemma~\ref{eigenvalue_formula} and Definition~\ref{Phi_def},
$$\Phi_{d-2}(\fx_1) = \sum_{i=1}^{d-1} (\fx_1-\theta_i).$$
By \eqref{e:x1_minus_theta} and Remark~\ref{r:psid_exists}, $(\fx_1 - \theta_i)\fe_i \in \fS_d$ ($1 \leq i \leq d-1$). Hence
\begin{equation}\Phi_{d-2}(\fx_1)\sum_{i=1}^{d-1}\fe_i \in \fS_d.\label{e:phid-2_in_fSd}\end{equation}
It follows from \eqref{phi_difference_sums}, \eqref{e:phid-2_big_sum_in_fSd}, and \eqref{e:phid-2_in_fSd} that
$$\Phi_{d-2}(\fx_1)(1-\fe_0-\fe_d) \in \fS_d.$$

(ii) Similar to the proof of (i).
\end{proof}

\begin{cor}
Assume $d \geq 2$. The following (i), (ii) hold.
\renumerate
    \item $\Phi_{d-2}(\fx_1) \in \varphi_d(\fS_d)$.
    \item $\Phi_{d-2}(\fx_1^*) \in \varphi_d(\fS_d)$.
\eenumerate
\label{Phid-2_in_fSd}
\end{cor}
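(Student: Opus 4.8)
The plan is to deduce this corollary directly from Lemma~\ref{phi_of_minpol} together with the fact that $\varphi_d$ is surjective with kernel $\fQ_d$ (Lemma~\ref{phid_exists}). The key observation is that $\varphi_d$ maps the element $\Phi_{d-2}(\fx_1)(1-\fe_0-\fe_d)$ of $\fS_d$ to exactly $\Phi_{d-2}(\fx_1)$ in $\fT_{d-2}$, which would immediately give (i); then (ii) follows by the analogous argument using part (ii) of Lemma~\ref{phi_of_minpol}.

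First I would compute $\varphi_d\big(\Phi_{d-2}(\fx_1)(1-\fe_0-\fe_d)\big)$. Since $\varphi_d$ is an algebra homomorphism, this equals $\varphi_d(\Phi_{d-2}(\fx_1)) \cdot \varphi_d(1-\fe_0-\fe_d)$. By Lemma~\ref{phi_of_xi} (i) applied with $i=1$, we have $\varphi_d(\fx_1)=\fx_1$, hence $\varphi_d(\Phi_{d-2}(\fx_1))=\Phi_{d-2}(\fx_1)$ in $\fT_{d-2}$. By Definition~\ref{phiDef}, $\varphi_d(\fe_0)=0$ and $\varphi_d(\fe_d)=0$, so $\varphi_d(1-\fe_0-\fe_d)=1$. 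Therefore $\varphi_d\big(\Phi_{d-2}(\fx_1)(1-\fe_0-\fe_d)\big)=\Phi_{d-2}(\fx_1)$.

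Combining this with Lemma~\ref{phi_of_minpol} (i), which says $\Phi_{d-2}(\fx_1)(1-\fe_0-\fe_d)\in\fS_d$, we conclude that $\Phi_{d-2}(\fx_1)=\varphi_d\big(\Phi_{d-2}(\fx_1)(1-\fe_0-\fe_d)\big)\in\varphi_d(\fS_d)$, proving (i). Part (ii) is handled identically, using $\varphi_d(\fx_1^*)=\fx_1^*$ (Lemma~\ref{phi_of_xi} (ii) with $i=1$), $\varphi_d(\fe_0^*)=\varphi_d(\fe_d^*)=0$, and Lemma~\ref{phi_of_minpol} (ii). There is no real obstacle here; the corollary is essentially a bookkeeping consequence of the preceding lemma once one notices that $\varphi_d$ fixes $\fx_1$ and $\fx_1^*$ and kills the boundary idempotents $\fe_0,\fe_d,\fe_0^*,\fe_d^*$. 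The only mild care needed is to confirm that $\varphi_d$ being an algebra homomorphism lets us pull the polynomial $\Phi_{d-2}$ through, which is immediate.
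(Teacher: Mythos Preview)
Your proposal is correct and takes essentially the same approach as the paper: apply $\varphi_d$ to the element $\Phi_{d-2}(\fx_1)(1-\fe_0-\fe_d)\in\fS_d$ from Lemma~\ref{phi_of_minpol}, using $\varphi_d(\fx_1)=\fx_1$ and $\varphi_d(\fe_0)=\varphi_d(\fe_d)=0$ to see that the image is $\Phi_{d-2}(\fx_1)$. Your write-up in fact spells out the computation in slightly more detail than the paper does; the reference to Lemma~\ref{phid_exists} in your plan is not actually needed for the argument.
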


\begin{proof}
(i) By Lemma~\ref{phi_of_minpol} (i), $\Phi_{d-2}(\fx_1)(1-\fe_0-\fe_d) \in \fS_d$. By Definition~\ref{phiDef} and Lemma~\ref{phi_of_xi},
\begin{equation*}
\varphi_d\big(\Phi_{d-2}(\fx_1)(1-\fe_0-\fe_d)\big) = \Phi_{d-2}(\fx_1).\end{equation*}

(ii) Similar to the proof of (i).
\end{proof}

\begin{lem}\label{h,i,j_in_Pd}
Assume $d \geq 2$. For $0 \leq h,i,j \leq d-2$ such that $(h,i,j) \notin P_{d-2}$, 
\begin{equation}(h+1,i-2r,j+1) \notin P_d \qquad \qquad (0 \leq r \leq \lfloor i/2 \rfloor), \label{i-2r}\end{equation}
or
\begin{equation}(h+1,i+2r,j+1) \notin P_d \qquad \qquad (1 \leq r \leq \lfloor (d-i)/2\rfloor). \label{i+2r} \end{equation}
\end{lem}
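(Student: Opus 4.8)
The plan is to split into cases according to which of the three conditions (i)--(iii) of Corollary~\ref{permissible_conditions} is violated by $(h,i,j)$ relative to $P_{d-2}$; since $(h,i,j)\notin P_{d-2}$, at least one of them is violated, and in each case I will exhibit one of the two families \eqref{i-2r}, \eqref{i+2r} that lies entirely outside $P_d$. Before the cases I would record two small observations: the hypothesis $i \le d-2$ forces $\lfloor(d-i)/2\rfloor \ge 1$, so the family \eqref{i+2r} is nonempty (and \eqref{i-2r} is nonempty since $r=0$ is allowed), and every triple occurring in \eqref{i-2r} or \eqref{i+2r} has all three entries in $\{0,1,\dots,d\}$, so for such a triple ``$\notin P_d$'' genuinely means that one of (i)--(iii) fails for it.

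The parity case is the easiest: if $h+i+j$ is odd, then $(h+1)+(i\mp 2r)+(j+1)=h+i+j+2\mp 2r$ is odd for every $r$, so condition (iii) fails for every triple in both families. The case where condition (ii) fails is nearly as short: if $h+i+j>2(d-2)$, hence $h+i+j\ge 2d-3$, then for $r\ge 1$ the entries of $(h+1,i+2r,j+1)$ sum to $h+i+j+2+2r\ge 2d+1>2d$, so condition (ii) fails for every triple in \eqref{i+2r}.

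The main work is the triangle-inequality case, and the key point is that replacing $(h,i,j)$ by $(h+1,\,i\pm 2r,\,j+1)$ raises $h+j$ by $2$ while moving $i$ by $\pm 2r$, so one should move $i$ in the direction that \emph{preserves} the violation. Concretely, if the triangle inequality failed because $i>h+j$, then for $r\ge 1$ one has $i+2r\ge (h+j+1)+2>(h+1)+(j+1)$, so \eqref{i+2r} holds; if it failed because $i<|h-j|$, say $i<h-j$ (the case $j>h$ is symmetric, using that $P_d$ is invariant under interchanging the outer two entries), then for $0\le r\le\lfloor i/2\rfloor$ one has $(i-2r)+(j+1)\le i+j+1<(h-j)+j+1=h+1$, so \eqref{i-2r} holds. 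Since ``$(h,i,j)\notin P_{d-2}$'' means exactly that one of (i)--(iii) fails, these cases are exhaustive, and the proof is complete.

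I expect the only genuine subtlety to be this choice of which of the two families to use as a function of how the triangle inequality was violated (and, in the $i<|h-j|$ subcase, the appeal to symmetry of $P_d$ in its outer entries); everything else is routine bookkeeping with the inequalities in Corollary~\ref{permissible_conditions}, and I would keep the argument organized as the four short blocks above rather than trying to treat all subcases uniformly.
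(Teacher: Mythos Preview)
Your proof is correct and follows essentially the same case split as the paper's own proof: both handle the parity case, the $h+i+j>2(d-2)$ case, and the two triangle-inequality subcases $i>h+j$ and $i<|h-j|$, choosing \eqref{i+2r} in the first and third of these and \eqref{i-2r} in the second and fourth. The only cosmetic differences are that you add the preliminary remarks about nonemptiness and range of the triples (the paper omits these), and you treat the $i<|h-j|$ subcase by a WLOG/symmetry reduction whereas the paper works directly with the absolute value $|(h+1)-(j+1)|=|h-j|$.
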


\begin{proof}
We consider the three cases in Corollary~\ref{permissible_conditions}. For convenience, we consider these cases in order (ii), (iii), (i).

First, assume that $h+i+j > 2(d-2)$. Then
$$(h+1)+(i+2r)+(j+1) > 2d \qquad \qquad (1 \leq r \leq \lfloor(d-i)/2\rfloor).$$
Thus \eqref{i+2r} holds.

Next, assume that $h+i+j$ is odd. Then $(h+1)+(i-2r)+(j+1)$ is odd ($0 \leq r \leq \lfloor i/2 \rfloor$). Thus \eqref{i-2r} holds.

For the rest of this proof, assume that $h,i,j$ fail the triangle inequality. This leaves two subcases:
$$ i > h+j, \qquad \qquad i < |h-j|.$$
First, assume $i > h+j$. Then
$$i + 2r > (j+1) + (h+1) \qquad \qquad (1 \leq r \leq \lfloor (d-i)/2 \rfloor).$$
Hence $h+1,i+2r,j+1$ fail the triangle inequality ($1 \leq r \leq \lfloor (d-i)/2 \rfloor$). Thus \eqref{i+2r} holds.

Lastly, assume $i < |h-j|$. Then
$$i -2r < |(h+1) - (j+1)| \qquad \qquad (0 \leq r \leq \lfloor i/2 \rfloor).$$
Hence $h+1,i-2r,j+1$ fail the triangle inequality ($0 \leq r \leq \lfloor i/2 \rfloor$). Thus \eqref{i-2r} holds.
\end{proof}

\begin{lem}\label{triple_in_image}
Assume $d \geq 2.$ For $0 \leq h,i,j \leq d-2$ such that $(h,i,j) \notin P_{d-2}$, the following $(i),(ii)$ hold.
\renumerate
    \item $\fe_h^*\fx_i\fe_j^* \in \varphi_d(\fS_{d})$.
    \item $\fe_h\fx_i^*\fe_j \in \varphi_d(\fS_{d})$.
\eenumerate
\end{lem}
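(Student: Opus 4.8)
The goal is to show that for each triple $(h,i,j)$ with $0 \le h,i,j \le d-2$ and $(h,i,j) \notin P_{d-2}$, the element $\fe_h^*\fx_i\fe_j^*$ (and its dual) lies in $\varphi_d(\fS_d)$. The natural strategy is to find a preimage under $\varphi_d$ of (a scalar multiple of) $\fe_h^*\fx_i\fe_j^*$ that can be shown to live in $\fS_d$. Since $\varphi_d$ sends $\fe_{h+1}^* \mapsto \fe_h^*$ and $\fe_{j+1}^* \mapsto \fe_j^*$, the element $\fe_{h+1}^*\fx_{i'}\fe_{j+1}^* \in \fT_d$ maps under $\varphi_d$ to $\fe_h^*\varphi_d(\fx_{i'})\fe_h^*$... more precisely to $\fe_h^*\varphi_d(\fx_{i'})\fe_j^*$, and by Lemma~\ref{phi_of_xi}(i) the image $\varphi_d(\fx_{i'})$ expands as $\fx_{i'} - \fx_{i'-2}$ (for $2 \le i' \le d-2$) or the analogous telescoping expressions near the endpoints. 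This telescoping is the key: summing $\varphi_d(\fx_{i'})$ over an appropriate range of $i'$ collapses to a single $\fx_i$ (up to a $\Phi_{d-2}(\fx_1)$ correction term at the top).

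First I would note the telescoping identity: for any $0 \le i \le d-2$,
\[
\fx_i = \sum_{r \ge 0} \varphi_d(\fx_{i+2r}),
\]
where the sum is finite and the top terms are handled via Lemma~\ref{phi_of_xi} — concretely, $\varphi_d(\fx_{i+2} + \fx_{i+4} + \cdots)$ telescopes, and the contributions of $\fx_{d-1}$ or $\fx_d$ bring in multiples of $\Phi_{d-2}(\fx_1)$ or $\fx_1\Phi_{d-2}(\fx_1)$. Since $\Phi_{d-2}(\fx_1) \in \varphi_d(\fS_d)$ by Corollary~\ref{Phid-2_in_fSd}(i), and $\varphi_d(\fS_d)$ is a two-sided ideal of $\fT_{d-2}$ (being the image of an ideal under a surjective homomorphism, using Lemma~\ref{phid_exists}), those correction terms — when multiplied on left and right by $\fe_h^*$ and $\fe_j^*$ — are absorbed into $\varphi_d(\fS_d)$. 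Hence it suffices to show that $\fe_h^*\varphi_d(\fx_{i+2r})\fe_j^*$, i.e.\ $\varphi_d(\fe_{h+1}^*\fx_{i+2r}\fe_{j+1}^*)$, lies in $\varphi_d(\fS_d)$ for each relevant $r$, since the target $\fe_h^*\fx_i\fe_j^*$ is then a sum of such elements plus an element of $\varphi_d(\fS_d)$. Actually, more carefully, one should telescope in the direction $i-2r$ or $i+2r$ as dictated by Lemma~\ref{h,i,j_in_Pd}, which is precisely designed to guarantee that one of the two families of triples $(h+1,i-2r,j+1)$ or $(h+1,i+2r,j+1)$ consists entirely of non-elements of $P_d$.

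So the proof proceeds by the dichotomy of Lemma~\ref{h,i,j_in_Pd}. In the first case, every triple $(h+1,i-2r,j+1)$ for $0 \le r \le \lfloor i/2\rfloor$ fails to lie in $P_d$, hence $\fe_{h+1}^*\fx_{i-2r}\fe_{j+1}^* \in \fS_d$ by Definition~\ref{fSd_def}, so its $\varphi_d$-image lies in $\varphi_d(\fS_d)$; summing the telescoped images $\varphi_d(\fx_{i-2r})$ recovers $\fx_i$ exactly (the bottom of the telescope is $\fx_0$ or $\fx_1$, with no $\Phi$ correction), giving $\fe_h^*\fx_i\fe_j^* \in \varphi_d(\fS_d)$. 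In the second case, every triple $(h+1,i+2r,j+1)$ for $1 \le r \le \lfloor(d-i)/2\rfloor$ fails to lie in $P_d$; here we telescope upward, and while the sum $\sum_{r\ge 1}\varphi_d(\fx_{i+2r})$ equals $\fx_i$ minus a $\Phi_{d-2}(\fx_1)$-type term, that term is killed by Corollary~\ref{Phid-2_in_fSd} after sandwiching with $\fe_h^*,\fe_j^*$ and using that $\varphi_d(\fS_d)$ is an ideal. The dual statement (ii) follows by the same argument with stars, using Lemma~\ref{phi_of_xi}(ii) and Corollary~\ref{Phid-2_in_fSd}(ii).

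**Main obstacle.** The delicate point is bookkeeping the telescoping sum near the top of the range — ensuring that the leftover terms are exactly multiples of $\Phi_{d-2}(\fx_1)$ (or $\fx_1\Phi_{d-2}(\fx_1)$), so that Corollary~\ref{Phid-2_in_fSd} applies — and matching this against the case split in Lemma~\ref{h,i,j_in_Pd} so that the family of triples one actually uses is the one guaranteed to avoid $P_d$. One must also confirm that in the "downward" case the telescope truly bottoms out cleanly (at $i=0$ or $i=1$) with no $\Phi$ correction, which it does since $\varphi_d(\fx_0)=\fx_0$ and $\varphi_d(\fx_1)=\fx_1$. None of the steps is deep, but the index-chasing requires care.
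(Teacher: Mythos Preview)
Your proposal is correct and follows essentially the same approach as the paper's proof: split into the two cases of Lemma~\ref{h,i,j_in_Pd}, telescope using Lemma~\ref{phi_of_xi}, and in the upward case absorb the $\Phi_{d-2}(\fx_1)$ remainder via Corollary~\ref{Phid-2_in_fSd} together with the fact that $\varphi_d(\fS_d)$ is a two-sided ideal (by surjectivity of $\varphi_d$). The paper carries out exactly this computation, writing the telescoping sums explicitly; the only cosmetic difference is that the paper records the upward telescope as yielding $-\fe_h^*\fx_i\fe_j^* + \fe_h^* g(\fx_1)\Phi_{d-2}(\fx_1)\fe_j^*$ (so the sign on $\fx_i$ is negative), which of course does not affect the conclusion.
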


\begin{proof}
(i) We consider the two cases in Lemma~\ref{h,i,j_in_Pd}.

First, assume \eqref{i-2r} holds. Then by Definition~\ref{fSd_def},
$$\fe_{h+1}^*\fx_{i-2r}\fe_{j+1}^* \in \fS_d, \qquad \qquad (0 \leq r \leq \lfloor i/2 \rfloor ).$$
By Definition~\ref{phiDef} and Lemma~\ref{phi_of_xi},
\begin{align}
    \varphi_d\bigg(\sum_{r=0}^{\lfloor i/2 \rfloor} \fe_{h+1}^*\fx_{i-2r}\fe_{j+1}^*\bigg) &= \sum_{r=0}^{\lfloor i/2 \rfloor-1}(\fe_h^*\fx_{i-2r}\fe_j^* - \fe_h^*\fx_{i-2r-2}\fe_j^*) + \fe_h^*\fx_{i-2\lfloor i/2 \rfloor}\fe_j^*.
    \label{varphid_of_i-2r_sum}
\end{align}
After expanding the sum and cancelling terms, the right-hand side of \eqref{varphid_of_i-2r_sum} becomes $\fe_h^*\fx_i\fe_j^*$. Thus $\fe_h^*\fx_i\fe_j^* \in \varphi_d(\fS_d).$

For the rest of this proof, assume \eqref{i+2r} holds. Then by Definition~\ref{fSd_def},
$$\fe_{h+1}^*\fx_{i+2r}\fe_{j+1}^* \in \fS_d \qquad \qquad (1 \leq r \leq \lfloor (d-i)/2 \rfloor ).$$
For notational convenience, define a polynomial $g \in \C[z]$ by
$$g = \begin{cases} \frac{1}{(d-1)!} &\text{if $d-i$ is odd;} \\ \frac{z}{d!} & \text{if $d-i$ is even.} \end{cases}$$
We have defined $g$ such that by Lemma~\ref{phi_of_xi},
$$\varphi_d(\fx_{i+2\lfloor(d-i)/2\rfloor}) = g(\fx_1)\Phi_{d-2}(\fx_1) - \fx_{i+2\lfloor (d-i)/2\rfloor - 2}.$$
Thus by Definition~\ref{phiDef} and Lemma~\ref{phi_of_xi},
\begin{equation}
\begin{split}
    &\varphi_d\bigg(\sum_{r=1}^{\lfloor(d-i)/2\rfloor} \fe_{h+1}^*\fx_{i+2r}\fe_{j+1}^*\bigg) \\
    =&\sum_{r=1}^{\lfloor(d-i)/2\rfloor-1}(\fe_h^*\fx_{i+2r}\fe_j^*-\fe_h^*\fx_{i+2r-2}\fe_j^*) + \fe_h^*g(\fx_1)\Phi_{d-2}(\fx_1)\fe_j^* - \fe_h^*\fx_{i+2\lfloor(d-i)/2\rfloor-2}\fe_j^*.
    \label{varphid_of_i+2r_sum}
\end{split}
\end{equation}
After expanding the sum and cancelling terms, the right-hand side of \eqref{varphid_of_i+2r_sum} becomes $-\fe_h^*\fx_i\fe_j^* + \fe_h^*g(\fx_1)\Phi_{d-2}(\fx_1)\fe_j^*.$ Hence
\begin{equation}-\fe_h^*\fx_i\fe_j^* + \fe_h^*g(\fx_1)\Phi_{d-2}(\fx_1)\fe_j^* \in \varphi_d(\fS_d).
\label{e:ehxiej+phi_stuff}
\end{equation}
By Corollary~\ref{Phid-2_in_fSd} (i) and the surjectivity of $\varphi_d,$
\begin{equation}
    \fe_{h}^*g(\fx_1)\Phi_{d-2}(\fx_1)\fe_{j}^* \in \varphi_d(\fS_d).
    \label{ehgphiej_in_phi_of_fSd}
\end{equation}
Therefore by \eqref{e:ehxiej+phi_stuff} and \eqref{ehgphiej_in_phi_of_fSd}, $\fe_h^*\fx_i\fe_j^* \in \varphi_d(\fS_d).$

(ii) Similar to the proof of (i).
\end{proof}

We have $\fS_{d-2} \subseteq \varphi_d(\fS_d)$ by Lemmas~\ref{phi_of_esum1} and~\ref{triple_in_image} together with the surjectivity of $\varphi_d$.

\begin{prop}\label{varphi_of_fSd}
Assume $d \geq 2.$ Then $\varphi_d(\fS_d) = \fS_{d-2}.$
\end{prop}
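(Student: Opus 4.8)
The plan is to prove the two containments $\varphi_d(\fS_d) \subseteq \fS_{d-2}$ and $\fS_{d-2} \subseteq \varphi_d(\fS_d)$ separately, and then conclude equality. Almost all of the substantive work has already been done in the preceding lemmas, so the proof of the proposition itself should be a short assembly; the only thing to be careful about is the bookkeeping that lets ``checking generators'' suffice for each inclusion.

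For the forward containment, I would note that $\fS_d$ is the two-sided ideal of the free algebra $\fT_d$ generated by the elements (1)--(4) of Definition~\ref{fSd_def}, so every element of $\fS_d$ has the form $\sum_k a_k g_k b_k$ with each $g_k$ one of those generators. Since $\varphi_d$ is an algebra homomorphism, $\varphi_d\big(\sum_k a_k g_k b_k\big) = \sum_k \varphi_d(a_k)\,\varphi_d(g_k)\,\varphi_d(b_k)$, and $\fS_{d-2}$ is a two-sided ideal of $\fT_{d-2}$; hence it is enough to check that $\varphi_d$ sends each generator of $\fS_d$ into $\fS_{d-2}$. The generators of types (1) and (2) are handled by Lemma~\ref{phi_of_esum1}, which shows that their images are precisely the generators of types (1) and (2) of $\fS_{d-2}$, and the generators of types (3) and (4) are handled by Lemma~\ref{phi_of_triple}. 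This gives $\varphi_d(\fS_d)\subseteq\fS_{d-2}$.

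For the reverse containment, the key preliminary point is that $\varphi_d(\fS_d)$ is itself a two-sided ideal of $\fT_{d-2}$: given $a,b\in\fT_{d-2}$, I would use the surjectivity of $\varphi_d$ (Lemma~\ref{phid_exists}) to write $a=\varphi_d(a')$ and $b=\varphi_d(b')$, so that $a\,\varphi_d(s)\,b = \varphi_d(a' s b')\in\varphi_d(\fS_d)$ for every $s\in\fS_d$. Consequently it suffices to exhibit each of the four generators of $\fS_{d-2}$ as an element of $\varphi_d(\fS_d)$. Types (1) and (2) follow from Lemma~\ref{phi_of_esum1} applied to the corresponding generator of $\fS_d$; types (3) and (4) are exactly the content of Lemma~\ref{triple_in_image}. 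Therefore $\fS_{d-2}\subseteq\varphi_d(\fS_d)$, and combining the two inclusions yields $\varphi_d(\fS_d)=\fS_{d-2}$.

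The main obstacle in this circle of ideas is not the proposition itself but the inputs it rests on: the explicit evaluation of $\varphi_d(\fx_i)$ in Lemma~\ref{phi_of_xi} (which forces the split into the four ranges of $i$, with the factors of $\Phi_{d-2}$ appearing at $i=d-1,d$), and the two combinatorial lemmas on permissible triples, Lemma~\ref{hij_in_Pd-2} and Lemma~\ref{h,i,j_in_Pd}, together with the telescoping-sum manipulation in Lemma~\ref{triple_in_image}. With those available, the remaining content of the proposition is purely formal, amounting to the observation that a homomorphic image of a two-sided ideal under a surjection is again a two-sided ideal, plus the generator-by-generator matching described above.
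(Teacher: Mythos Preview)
Your proposal is correct and follows essentially the same approach as the paper: prove the two containments separately, with the forward inclusion coming from Lemmas~\ref{phi_of_esum1} and~\ref{phi_of_triple}, and the reverse inclusion from Lemmas~\ref{phi_of_esum1} and~\ref{triple_in_image} together with the surjectivity of $\varphi_d$. Your write-up is slightly more explicit than the paper's about why checking generators suffices (and in particular why $\varphi_d(\fS_d)$ is a two-sided ideal of $\fT_{d-2}$), but the content is the same.
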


\begin{proof}\label{image_of_fSd}
We mentioned below Lemma~\ref{phi_of_triple} that $\varphi_d(\fS_d) \subseteq \fS_{d-2}$, and we mentioned below Lemma~\ref{triple_in_image} that $\fS_{d-2} \subseteq \varphi_d(\fS_d)$.
\end{proof}

We next consider how $\varphi_d$ induces an algebra homomorphism from $\cT_d \to \cT_{d-2}$.

\begin{prop}\label{hom_cTd_to_cTd-2}
Assume $d \geq 2.$ There exists an algebra homomorphism $\varphi_d':\cT_d \to \cT_{d-2}$ that sends
\begin{align*}
    e_0 &\mapsto 0, &e_0^* &\mapsto 0, \\
    e_i &\mapsto e_{i-1}, &e_i^*&\mapsto e_{i-1}^* \qquad \qquad (1 \leq i \leq d-1), \\
    e_d &\mapsto 0, &e_d^* &\mapsto 0.
\end{align*}
Moreover, $\varphi_d'$ is surjective, and $\ker(\varphi_d') = \psi_d(\fQ_d)$.
\end{prop}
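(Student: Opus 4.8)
The plan is to realize $\varphi_d'$ as the map induced by $\varphi_d$ on the quotient presentations, using the machinery already assembled. Recall from Remark~\ref{r:psid_exists} that $\psi_d:\fT_d\to\cT_d$ is surjective with kernel $\fS_d$, and likewise $\psi_{d-2}:\fT_{d-2}\to\cT_{d-2}$ is surjective with kernel $\fS_{d-2}$. First I would consider the composite $\psi_{d-2}\circ\varphi_d:\fT_d\to\cT_{d-2}$. By Proposition~\ref{varphi_of_fSd}, $\varphi_d(\fS_d)=\fS_{d-2}$, and since $\varphi_d$ is surjective (Lemma~\ref{phid_exists}) we get $\varphi_d^{-1}(\fS_{d-2})\supseteq\fS_d$; in fact, because $\fS_{d-2}=\varphi_d(\fS_d)$ and $\ker\varphi_d=\fQ_d$, the preimage is exactly $\fS_d+\fQ_d$. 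In any case $\fS_d\subseteq\ker(\psi_{d-2}\circ\varphi_d)$, so $\psi_{d-2}\circ\varphi_d$ factors through $\fT_d/\fS_d\cong\cT_d$ (via the isomorphism $\psi_d$), yielding an algebra homomorphism $\varphi_d':\cT_d\to\cT_{d-2}$ with $\varphi_d'\circ\psi_d=\psi_{d-2}\circ\varphi_d$. Tracing generators: $\psi_d(\fe_i)=e_i$, and $\psi_{d-2}(\varphi_d(\fe_i))=\psi_{d-2}(\fe_{i-1})=e_{i-1}$ for $1\leq i\leq d-1$ while $\varphi_d(\fe_0)=\varphi_d(\fe_d)=0$; hence $\varphi_d'(e_i)=e_{i-1}$ for $1\leq i\leq d-1$ and $\varphi_d'(e_0)=\varphi_d'(e_d)=0$, and similarly for the starred elements. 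This establishes existence and the stated action on generators.

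For surjectivity: $\psi_{d-2}$ is surjective and $\varphi_d$ is surjective, so $\psi_{d-2}\circ\varphi_d=\varphi_d'\circ\psi_d$ is surjective; therefore $\varphi_d'$ is surjective (its image contains the image of $\varphi_d'\circ\psi_d$, which is all of $\cT_{d-2}$). For the kernel, I would argue as follows. An element of $\cT_d$ lies in $\ker\varphi_d'$ iff it has the form $\psi_d(w)$ with $\psi_{d-2}(\varphi_d(w))=0$, i.e. $\varphi_d(w)\in\fS_{d-2}$. By Proposition~\ref{varphi_of_fSd}, $\varphi_d(w)\in\fS_{d-2}=\varphi_d(\fS_d)$, so $\varphi_d(w)=\varphi_d(s)$ for some $s\in\fS_d$, whence $w-s\in\ker\varphi_d=\fQ_d$; thus $w\in\fS_d+\fQ_d$ and $\psi_d(w)\in\psi_d(\fQ_d)$ (using $\psi_d(\fS_d)=0$). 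Conversely $\psi_d(\fQ_d)\subseteq\ker\varphi_d'$: for a generator $\fe_0$ of $\fQ_d$ we have $\varphi_d'(\psi_d(\fe_0))=\psi_{d-2}(\varphi_d(\fe_0))=\psi_{d-2}(0)=0$, and similarly for $\fe_d,\fe_0^*,\fe_d^*$; since $\psi_d(\fQ_d)$ is the two-sided ideal of $\cT_d$ generated by $\psi_d$ of these generators (as $\psi_d$ is a surjective algebra homomorphism), and $\ker\varphi_d'$ is a two-sided ideal, the inclusion follows. Hence $\ker\varphi_d'=\psi_d(\fQ_d)$.

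The main obstacle here is really bookkeeping rather than a deep difficulty: one must be careful that "$\varphi_d(w)\in\fS_{d-2}$ implies $w\in\fS_d+\fQ_d$" uses Proposition~\ref{varphi_of_fSd} in the precise form $\fS_{d-2}=\varphi_d(\fS_d)$ (not merely an inclusion), together with $\ker\varphi_d=\fQ_d$ from Lemma~\ref{phid_exists}. One also must check that $\psi_d(\fQ_d)$ is genuinely the ideal generated by the images of the four generators of $\fQ_d$ — this is immediate because a surjective algebra homomorphism carries a two-sided ideal generated by a set $S$ onto the two-sided ideal generated by the image of $S$. With these two points noted, the proof is a direct diagram-chase; I would present it in the order: (1) factor $\psi_{d-2}\circ\varphi_d$ through $\psi_d$ to get $\varphi_d'$; (2) read off the action on generators; (3) surjectivity; (4) the two inclusions for the kernel.
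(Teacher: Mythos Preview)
Your proof is correct and follows essentially the same approach as the paper: both factor the composite $\psi_{d-2}\circ\varphi_d:\fT_d\to\cT_{d-2}$ through $\fT_d/\fS_d\cong\cT_d$ using Proposition~\ref{varphi_of_fSd} and Lemma~\ref{phid_exists}, then read off the action on generators, surjectivity, and the kernel. The paper packages the factorization via the explicit maps $p_d$ and $q$ of Definition~\ref{induced_iso_p}, while you invoke the universal property of the quotient directly and spell out both kernel inclusions, but the underlying argument is identical.
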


\begin{proof}
We first consider the existence of $\varphi_d'$. By Lemma~\ref{fTd_hom_to_cTd}, Lemma~\ref{phid_exists}, and Proposition~\ref{varphi_of_fSd}, we have a surjective algebra homomorphism $\psi_{d-2} \circ \varphi_d:\fT_d \to \cT_{d-2}$ with kernel equal to $\fS_d + \fQ_d.$ This map induces an algebra isomorphism from the quotient algebra $\fT_d/(\fS_d + \fQ_d) \to \cT_{d-2}$; we say this isomorphism is canonical.

Let $q:\fT_d/\fS_d \to \fT_d/( \fS_d+\fQ_d)$ denote the quotient map, which we recall is an algebra homomorphism.

Recall the algebra isomorphism $p_d:\cT_d \to \fT_d/\fS_d$ from Definition~\ref{induced_iso_p}.

The following composition gives an algebra homomorphism from $\cT_d \to \cT_{d-2}$:
\begin{equation}
\begin{tikzcd}
\varphi_d': \quad \cT_d \arrow[r,"p_d"] & \fT_d/\fS_d \arrow[r,"q"] & \fT_d/(\fS_d + \fQ_d) \arrow[r,"can"] &  \cT_{d-2}.
\end{tikzcd}
\label{e:hom_diagram}
\end{equation}
We have shown that $\varphi_d'$ exists. With reference to \eqref{xi_less_0}, one routinely check that $\varphi_d'$ sends $e_i \mapsto e_{i-1}$ and $e_i^* \mapsto e_{i-1}^*$ ($0 \leq i \leq d$).

We next show that $\varphi_d'$ is surjective. This follows because each of the composition factors in \eqref{e:hom_diagram} is surjective.

Lastly, we consider the kernel of $\varphi_d'$. Inspection of \eqref{e:hom_diagram} shows that $\ker(\varphi_d') = p_d^{-1}(\fQ_d+\fS_d).$ By the construction of $p_d$, $p_d^{-1}(\fQ_d+\fS_d) = \psi_d(\fQ_d)$. Hence $\ker(\varphi_d') = \psi_d(\fQ_d)$.
\end{proof}


\begin{prop}\label{kerPhiIsu0}
Assume $d \geq 2$. The ideal $\cT_d u_0$ is equal to $\psi_d(\fQ_d).$ Moreover, $\cT_d u_0 = \ker(\varphi_d')$.
\end{prop}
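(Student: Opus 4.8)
The plan is to prove the two assertions in the order stated: first that $\cT_d u_0 = \psi_d(\fQ_d)$, and then deduce $\cT_d u_0 = \ker(\varphi_d')$ essentially for free from Proposition~\ref{hom_cTd_to_cTd-2}. For the first assertion I would establish two inclusions. For $\psi_d(\fQ_d) \subseteq \cT_d u_0$: recall from Definition~\ref{fQd_def} that $\fQ_d$ is generated as a two-sided ideal by $\fe_0,\fe_d,\fe_0^*,\fe_d^*$, so $\psi_d(\fQ_d)$ is the two-sided ideal of $\cT_d$ generated by $e_0,e_d,e_0^*,e_d^*$. By Lemma~\ref{u0e0}, each of $e_0,e_d,e_0^*,e_d^*$ satisfies $u_0 \cdot (\,\cdot\,) = (\,\cdot\,)$; since $u_0$ is central (Lemma~\ref{u0_equality}), this means each of these four generators lies in the ideal $\cT_d u_0$, and hence so does the whole ideal they generate. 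For the reverse inclusion $\cT_d u_0 \subseteq \psi_d(\fQ_d)$, it suffices to show $u_0 \in \psi_d(\fQ_d)$, because $\cT_d u_0$ is the principal (two-sided, as $u_0$ is central) ideal generated by $u_0$. Here I would use the explicit formula from Definition~\ref{def:u0}: $u_0 = 2^d \sum_{i=0}^d k_i^{-1} e_i^* e_0 e_i^*$. Each summand contains the factor $e_0$, which lies in $\psi_d(\fQ_d)$ by definition, and $\psi_d(\fQ_d)$ is a two-sided ideal, so each summand $e_i^* e_0 e_i^*$ lies in $\psi_d(\fQ_d)$, whence $u_0 \in \psi_d(\fQ_d)$.

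Combining the two inclusions gives $\cT_d u_0 = \psi_d(\fQ_d)$. For the ``moreover'' clause, Proposition~\ref{hom_cTd_to_cTd-2} already records that $\ker(\varphi_d') = \psi_d(\fQ_d)$, so the identification of $\cT_d u_0$ with $\psi_d(\fQ_d)$ immediately yields $\cT_d u_0 = \ker(\varphi_d')$. Thus this part of the argument is a short bookkeeping step once the main equality is in hand.

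I do not expect any step to be a serious obstacle; the proof is essentially a matter of assembling Lemma~\ref{u0e0}, the centrality of $u_0$, the explicit formula for $u_0$ from Definition~\ref{def:u0}, and the description of $\fQ_d$. The one point requiring a little care is keeping straight which ideal is generated by what: $\psi_d(\fQ_d)$ must be recognized as the two-sided ideal of $\cT_d$ generated by $\{e_0, e_d, e_0^*, e_d^*\}$ (this follows from surjectivity of $\psi_d$ and the fact that $\psi_d$ maps the generating set of $\fQ_d$ onto $\{e_0, e_d, e_0^*, e_d^*\}$), and one must invoke centrality of $u_0$ to pass freely between ``principal left ideal $\cT_d u_0$'' and ``two-sided ideal generated by $u_0$''. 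If I wanted to avoid the explicit formula for $u_0$ in the reverse inclusion, an alternative is to write $u_0 = u_0 \cdot 1 = u_0 \sum_{i=0}^d e_i^*$ and use Lemma~\ref{u0e0}(iii) plus the triple-product relations to show the middle terms $u_0 e_i^*$ for $1 \le i \le d-1$ also land in the ideal generated by $e_0,e_d,e_0^*,e_d^*$; but the direct route via Definition~\ref{def:u0} is cleaner, so that is what I would use.
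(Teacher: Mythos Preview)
Your proposal is correct and matches the paper's proof essentially line for line: both use surjectivity of $\psi_d$ to identify $\psi_d(\fQ_d)$ with the two-sided ideal of $\cT_d$ generated by $e_0,e_d,e_0^*,e_d^*$, invoke Lemma~\ref{u0e0} (and centrality of $u_0$) for the inclusion $\psi_d(\fQ_d)\subseteq \cT_d u_0$, use the explicit formula $u_0 = 2^d\sum_r k_r^{-1} e_r^* e_0 e_r^*$ for the reverse inclusion, and then read off the ``moreover'' clause from Proposition~\ref{hom_cTd_to_cTd-2}. The only cosmetic difference is that the paper phrases the reverse inclusion as the chain $\cT_d u_0 \subseteq \cT_d e_0 \subseteq \psi_d(\fQ_d)$ rather than ``$u_0 \in \psi_d(\fQ_d)$'', which amounts to the same thing.
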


\begin{proof}
We first consider the first assertion. Because $\psi_d$ is surjective, $\psi_d(\fQ_d)$ is equal to the two-sided ideal of $\cT_d$ generated by $e_0,e_d,e_0^*,e_d^*.$ Thus by Lemma~\ref{u0e0}, the ideal $\psi_d(\fQ_d) \subseteq \cT_du_0.$

Recall that $u_0 = 2^d\sum_{r=0}^d k_r^{-1}e_r^*e_0e_r^*.$ Thus
$$\cT_du_0 \subseteq \cT_de_0 \subseteq \psi_d(\fQ_d).$$
This proves the first assertion.

The second assertion follows by the first, together with Proposition~\ref{hom_cTd_to_cTd-2}.
\end{proof}

\begin{cor}\label{cTd-2_iso}
Assume $d \geq 2$. There exists an algebra isomorphism $\cT_d(1-u_0) \to \cT_{d-2}$ that sends
\begin{align*}
    e_0(1-u_0) &\mapsto 0, &e_0^*(1-u_0) &\mapsto 0, \\
    e_i(1-u_0) &\mapsto e_{i-1}, &e_i^*(1-u_0) &\mapsto e_{i-1}^* \qquad \qquad (1 \leq i \leq d-1), \\
    e_d(1-u_0) &\mapsto 0, &e_d^*(1-u_0) &\mapsto 0.
\end{align*}
\end{cor}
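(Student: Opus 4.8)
The plan is to deduce Corollary~\ref{cTd-2_iso} directly from the two results immediately preceding it, namely Corollary~\ref{Tdu0_quotient} and Propositions~\ref{hom_cTd_to_cTd-2} and~\ref{kerPhiIsu0}. First I would recall from Corollary~\ref{Tdu0_quotient} that there is an algebra isomorphism $\cT_d/\cT_du_0 \to \cT_d(1-u_0)$ sending $e_i + \cT_du_0 \mapsto e_i(1-u_0)$ and $e_i^* + \cT_du_0 \mapsto e_i^*(1-u_0)$ for $0 \leq i \leq d$. Next I would invoke Proposition~\ref{hom_cTd_to_cTd-2}: the homomorphism $\varphi_d':\cT_d \to \cT_{d-2}$ is surjective with kernel $\psi_d(\fQ_d)$, and by Proposition~\ref{kerPhiIsu0} that kernel equals $\cT_du_0$. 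Hence $\varphi_d'$ induces an algebra isomorphism $\cT_d/\cT_du_0 \to \cT_{d-2}$ by the first isomorphism theorem; call this induced map $\overline{\varphi_d'}$, and note it sends $e_i + \cT_du_0 \mapsto \varphi_d'(e_i)$ and likewise for the starred generators.

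The desired isomorphism $\cT_d(1-u_0) \to \cT_{d-2}$ is then obtained as the composition of the inverse of the map in Corollary~\ref{Tdu0_quotient} with $\overline{\varphi_d'}$. Concretely, the inverse of the Corollary~\ref{Tdu0_quotient} map sends $e_i(1-u_0) \mapsto e_i + \cT_du_0$ and $e_i^*(1-u_0) \mapsto e_i^* + \cT_du_0$, and then $\overline{\varphi_d'}$ sends these to $\varphi_d'(e_i)$ and $\varphi_d'(e_i^*)$ respectively. Reading off the action of $\varphi_d'$ from Proposition~\ref{hom_cTd_to_cTd-2}, we get $e_0(1-u_0) \mapsto 0$, $e_i(1-u_0) \mapsto e_{i-1}$ for $1 \leq i \leq d-1$, $e_d(1-u_0) \mapsto 0$, and the analogous statements with stars. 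This matches the claimed formulas exactly.

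There is essentially no obstacle here, since all the substantive work—the construction of $\varphi_d'$, the computation that $\ker(\varphi_d') = \psi_d(\fQ_d) = \cT_du_0$, and the structural facts about $u_0$—has already been carried out in Propositions~\ref{hom_cTd_to_cTd-2} and~\ref{kerPhiIsu0} and Corollary~\ref{Tdu0_quotient}. The only thing to be careful about is bookkeeping: verifying that the two isomorphisms compose in the right direction (one must use the \emph{inverse} of the Corollary~\ref{Tdu0_quotient} isomorphism, since that one goes from the quotient to the ideal) and that the generator images track correctly through both maps. So the proof is a short citation-and-composition argument: apply the first isomorphism theorem to $\varphi_d'$ using $\ker(\varphi_d') = \cT_du_0$, then precompose with the inverse of the map from Corollary~\ref{Tdu0_quotient}.
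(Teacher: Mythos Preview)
Your proposal is correct and follows essentially the same approach as the paper: apply the first isomorphism theorem to $\varphi_d'$ (using Propositions~\ref{hom_cTd_to_cTd-2} and~\ref{kerPhiIsu0} to identify $\ker(\varphi_d')=\cT_du_0$), then compose with the inverse of the isomorphism from Corollary~\ref{Tdu0_quotient}. The paper phrases the last step as ``identifying'' $\cT_d/\cT_du_0$ with $\cT_d(1-u_0)$, but this is exactly the composition you describe.
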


\begin{proof}
With reference to \eqref{xi_less_0}, Proposition~\ref{hom_cTd_to_cTd-2} implies that there exists an induced algebra isomorphism $\cT_d/\ker(\varphi_d') \to \cT_{d-2}$ which sends
$$e_i + \ker(\varphi_d') \mapsto e_{i-1}, \qquad \qquad e_i^*+\ker(\varphi_d') \mapsto e_{i-1}^* \qquad \qquad (0 \leq i \leq d).$$
By Proposition~\ref{kerPhiIsu0}, we know that $\ker(\varphi_d') = \cT_du_0$. Identifying the quotient algebra $\cT_d/\cT_du_0$ with $\cT_d(1-u_0)$ via the isomorphism in Corollary~\ref{Tdu0_quotient}, the result follows.
\end{proof}

\begin{cor}\label{cTd_sum_matrix_and_cTd-2}
Assume $d \geq 2$. There exists an algebra isomorphism
$$\cT_d\to M_{d+1}(\C)\oplus\cT_{d-2}.$$
\end{cor}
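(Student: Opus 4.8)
The plan is to simply compose the two decomposition results that have just been established. By Corollary~\ref{Td_iso_to_M_and_ideal}, we have an algebra isomorphism $\cT_d \to M_{d+1}(\C) \oplus \cT_d(1-u_0)$, which comes from the analysis of the primary central idempotent $u_0$ (in particular Proposition~\ref{idempotentresults2}, which splits $\cT_d$ as a direct sum of two-sided ideals $\cT_d u_0$ and $\cT_d(1-u_0)$ and identifies $\cT_d u_0$ with $M_{d+1}(\C)$). By Corollary~\ref{cTd-2_iso}, the complementary ideal $\cT_d(1-u_0)$ is isomorphic to $\cT_{d-2}$. Composing (more precisely, applying the identity on the $M_{d+1}(\C)$ summand and the isomorphism of Corollary~\ref{cTd-2_iso} on the other summand) yields the desired isomorphism $\cT_d \to M_{d+1}(\C) \oplus \cT_{d-2}$.

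In more detail: one would write down the map $\cT_d \to M_{d+1}(\C)\oplus\cT_d(1-u_0) \to M_{d+1}(\C)\oplus\cT_{d-2}$, where the first arrow is $t \mapsto (t u_0, t(1-u_0))$ composed with the isomorphism $\cT_d u_0 \cong M_{d+1}(\C)$ on the first coordinate, and the second arrow is $\mathrm{id}\oplus(\text{Corollary~\ref{cTd-2_iso}})$. Since both arrows are algebra isomorphisms, so is their composite. There is essentially nothing to check here beyond noting that a direct sum of algebra isomorphisms is an algebra isomorphism.

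The real work has all been done already: the hard parts were constructing the induced homomorphism $\varphi_d':\cT_d \to \cT_{d-2}$ (Proposition~\ref{hom_cTd_to_cTd-2}), which rested on the identity $\varphi_d(\fS_d) = \fS_{d-2}$ (Proposition~\ref{varphi_of_fSd}), and then showing that $\ker(\varphi_d') = \cT_d u_0$ (Proposition~\ref{kerPhiIsu0}), so that $\cT_d/\cT_d u_0 \cong \cT_{d-2}$. Given those facts together with the $u_0$-decomposition of $\cT_d$, the present corollary is immediate, so I do not anticipate any obstacle; the proof is a one-line composition of Corollaries~\ref{Td_iso_to_M_and_ideal} and~\ref{cTd-2_iso}.
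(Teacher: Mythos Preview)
Your proposal is correct and matches the paper's own proof exactly: the paper simply states that the result follows from Corollaries~\ref{Td_iso_to_M_and_ideal} and~\ref{cTd-2_iso}, which is precisely the composition you describe.
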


\begin{proof}
Follows from Corollaries~\ref{Td_iso_to_M_and_ideal} and~\ref{cTd-2_iso}.
\end{proof}

\begin{prop}\label{cTd_iso_Td}
Assume $d \geq 0.$ Then there exists an algebra isomorphism
$$\cT_d \to \bigoplus_{0 \leq r \leq \lfloor d/2\rfloor} M_{d+1-2r}(\C).$$
Moreover, the algebra $\cT_d$ is isomorphic to $T_d.$
\end{prop}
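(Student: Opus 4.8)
The plan is to prove the first assertion by induction on $d$, with the inductive step supplied by Corollary~\ref{cTd_sum_matrix_and_cTd-2}. Since that corollary relates $\cT_d$ to $\cT_{d-2}$, the recursion drops $d$ by $2$, so I would set up two base cases. For $d=0$, Remark~\ref{T0_is_C} gives $\cT_0 \cong \C = M_1(\C)$, which matches the claimed direct sum since $\lfloor 0/2\rfloor = 0$. For $d=1$, Proposition~\ref{dEquals1} gives $\cT_1 \cong M_2(\C)$, which matches since $\lfloor 1/2\rfloor = 0$ and $d+1 = 2$.

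For the inductive step, assume $d \geq 2$ and that the result holds for $d-2$. By Corollary~\ref{cTd_sum_matrix_and_cTd-2},
$$\cT_d \cong M_{d+1}(\C) \oplus \cT_{d-2},$$
and by the induction hypothesis,
$$\cT_{d-2} \cong \bigoplus_{0 \leq r \leq \lfloor (d-2)/2\rfloor} M_{d-1-2r}(\C).$$
The only bookkeeping is a reindexing: the substitution $r \mapsto r-1$ turns the last sum into $\bigoplus_{1 \leq r \leq \lfloor d/2\rfloor} M_{d+1-2r}(\C)$, using $\lfloor (d-2)/2\rfloor + 1 = \lfloor d/2\rfloor$, and the summand $M_{d+1}(\C)$ supplies the missing $r=0$ term. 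Combining these gives the desired isomorphism for $d$, completing the induction.

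For the ``moreover'' assertion, I would compare the first assertion with Proposition~\ref{T_sum_of_matrices}: both $\cT_d$ and $T_d$ are isomorphic to $\bigoplus_{0 \leq r \leq \lfloor d/2\rfloor} M_{d+1-2r}(\C)$, hence to each other. In fact this yields the sharper statement advertised in the introduction, namely that the canonical surjection $\natural:\cT_d \to T_d$ of Lemma~\ref{hom_exists} is itself an isomorphism: $\natural$ is surjective, and by the first assertion together with Proposition~\ref{T_sum_of_matrices} we have $\dim \cT_d = \dim T_d = \sum_{0 \leq r \leq \lfloor d/2\rfloor}(d+1-2r)^2$, so a surjective linear map between finite-dimensional spaces of equal dimension is bijective.

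I do not expect a genuine obstacle here; essentially all of the content has been packaged into Corollary~\ref{cTd_sum_matrix_and_cTd-2} (and, upstream of that, into the study of the primary central idempotent $u_0$ and the homomorphism $\varphi_d'$). The only points deserving care are remembering that the recursion forces two base cases and carrying out the index shift in the direct sum correctly.
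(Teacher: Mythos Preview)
Your proposal is correct and follows essentially the same approach as the paper: induction on $d$ with base cases $d=0$ (Remark~\ref{T0_is_C}) and $d=1$ (Proposition~\ref{dEquals1}), the inductive step via Corollary~\ref{cTd_sum_matrix_and_cTd-2}, and the second assertion by comparison with Proposition~\ref{T_sum_of_matrices}. Your write-up is in fact more explicit about the reindexing than the paper's, and your final paragraph about $\natural$ anticipates the content of the subsequent Theorem~\ref{main_result}.
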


\begin{proof}
We first consider the first assertion. We proceed by induction on $d$. The base cases of $d=0$ and $d=1$ are addressed in Remark~\ref{T0_is_C} and Proposition~\ref{dEquals1}. Now assume $d \geq 2.$ By Corollary~\ref{cTd_sum_matrix_and_cTd-2} and induction, we have algebra isomorphisms
$$\cT_{d} \to M_{d+1}(\C) \oplus \cT_{d-2} \to  \bigoplus_{0 \leq r \leq d/2} M_{d+1-2r}(\C).$$
This completes the proof of the first assertion.

To prove the second assertion, compare this result to Proposition~\ref{T_sum_of_matrices}.
\end{proof}

We conclude with the main result of this paper.

\begin{thm}\label{main_result}
Assume $d \geq 0$. The map $\natural:\cT_d \to T_d$ from Lemma~\ref{hom_exists} is an algebra isomorphism.
\end{thm}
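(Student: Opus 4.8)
The plan is to combine the structural results already established in the excerpt. By Lemma~\ref{hom_exists}, the map $\natural:\cT_d \to T_d$ is a surjective algebra homomorphism. By Proposition~\ref{cTd_iso_Td}, we have an algebra isomorphism $\cT_d \to \bigoplus_{0 \leq r \leq \lfloor d/2 \rfloor} M_{d+1-2r}(\C)$, and by Proposition~\ref{T_sum_of_matrices} (Go's theorem) we have an algebra isomorphism $T_d \to \bigoplus_{0 \leq r \leq \lfloor d/2 \rfloor} M_{d+1-2r}(\C)$ as well. In particular $\cT_d$ and $T_d$ have the same (finite) dimension $\sum_{0 \leq r \leq \lfloor d/2 \rfloor}(d+1-2r)^2$.

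The argument is then immediate: a surjective linear map between finite-dimensional vector spaces of equal dimension is automatically injective, hence bijective, hence an algebra isomorphism. So I would simply write: by Proposition~\ref{cTd_iso_Td} and Proposition~\ref{T_sum_of_matrices}, $\dim \cT_d = \dim T_d < \infty$; since $\natural$ is a surjective algebra homomorphism between algebras of the same finite dimension, it is an isomorphism.

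There is essentially no obstacle here — all the real work was done in establishing Proposition~\ref{cTd_iso_Td}, whose proof rests on the inductive decomposition $\cT_d \cong M_{d+1}(\C) \oplus \cT_{d-2}$ (Corollary~\ref{cTd_sum_matrix_and_cTd-2}) together with the base cases $d=0$ (Remark~\ref{T0_is_C}) and $d=1$ (Proposition~\ref{dEquals1}). The only point worth stating carefully is the dimension comparison and the standard fact that surjectivity plus equality of finite dimensions forces injectivity. One could alternatively phrase the conclusion more directly by noting that, since $\cT_d$ is semisimple with the stated Wedderburn decomposition matching that of $T_d$, any surjective homomorphism must be an isomorphism; but the dimension count is the cleanest route and requires nothing beyond what has already been proved.
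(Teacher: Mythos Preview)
Your proposal is correct and matches the paper's own proof essentially verbatim: the paper also invokes Proposition~\ref{cTd_iso_Td} to conclude that $\cT_d$ and $T_d$ have the same finite dimension, and then notes that the surjective homomorphism $\natural$ must therefore be an isomorphism. Your additional remarks about the explicit dimension formula and the alternative semisimplicity phrasing are fine but not needed.
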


\begin{proof}
By Proposition~\ref{cTd_iso_Td}, $\cT_d$ and $T_d$ have the same dimension as algebras. Because $\natural$ is a surjective algebra homomorphism between two algebras of the same dimension, it is an algebra isomorphism.
\end{proof}


\section{Acknowledgments}
The author is presently a graduate student at the University of Wisconsin--Madison. He would like to thank his advisor, Paul Terwilliger, for suggesting this project, for his hours of mentoring, and for giving many valuable suggestions for this manuscript.


 \bigskip

 \bigskip

 \noindent Nathan Nicholson

 \noindent Department of Mathematics

 \noindent University of Wisconsin

 \noindent 480 Lincoln Drive

 \noindent Madison, WI 53706-1388 USA

 \noindent email: nlnicholson@wisc.edu

\end{document}